\documentclass[aos]{imsart}

\RequirePackage{amsthm,amsmath,amsfonts,amssymb}
\RequirePackage[authoryear]{natbib}%% uncomment this for author-year citations
\RequirePackage[colorlinks,citecolor=blue,urlcolor=blue]{hyperref}
\RequirePackage{graphicx}

\usepackage{comment}
\usepackage{bm}
\usepackage{subcaption}
\usepackage{enumitem}

\startlocaldefs
\theoremstyle{plain}

\newtheorem{Theorem}{Theorem}[section]
\newtheorem{Lemma}[Theorem]{Lemma}
\newtheorem{Corollary}[Theorem]{Corollary}
\theoremstyle{definition}

\newtheorem*{Example}{Example}
\newtheorem*{Remark}{Remark}
\newtheorem{Assumption}{Assumption}

\newcommand{\CL}{\mathcal{L}}
\newcommand{\CH}{\mathcal{H}}

\newcommand{\T}{\mathbb{T}}

\newcommand{\normH}[2]{\left\| #1 \right\|_{#2}}

\newcommand{\borelH}[1]{\mathcal{B}\left( #1 \right)}
\newcommand{\measH}[1]{\left( #1,\borelH{#1} \right)}

\newcommand{\defset}[1]{\left\{ #1 \right\}}
\newcommand{\innerprod}[3]{\left\langle #1,#2 \right\rangle_{#3}}

\newcommand{\GPmeasure}[2]{\mathcal{N}\left(#1,#2\right)}

\newcommand{\Aref}[1]{Assumption \ref{#1}}

\renewcommand{\theenumi}{\roman{enumi}}

\newcounter{proofstep}
\renewcommand{\theproofstep}{\arabic{proofstep}}

\newcommand{\proofstep}[1]{%
  \refstepcounter{proofstep}% カウンタを 1 増やす
  \par\medskip\noindent
  \textbf{Step \theproofstep\ (#1).}\quad%
}


\endlocaldefs

\begin{document}
%%%%%%%%%%%%%%%%%%%%%%%%%%%%%%%%%%%%%%%%%%%%%%
%%                                          %%
%% Enter the title of your article here     %%
%%                                          %%
%%%%%%%%%%%%%%%%%%%%%%%%%%%%%%%%%%%%%%%%%%%%%%
\title{Exponential mixing properties of nonlinear functional autoregressive models}

\begin{aug}
%%%%%%%%%%%%%%%%%%%%%%%%%%%%%%%%%%%%%%%%%%%%%%%
%% Additional information (such as           %%
%% indicating the corresponding author) can  %%
%% be included in the Acknowledgments        %%
%% section if necessary.                     %%
%% ORCID can be inserted by command:         %%
%% \orcid{0000-0000-0000-0000}               %%
%%%%%%%%%%%%%%%%%%%%%%%%%%%%%%%%%%%%%%%%%%%%%%%
\author[A]{\fnms{Shuntarou}~\snm{Suzuki}\ead[label=e1]{s-suzuki@sigmath.es.osaka-u.ac.jp}}
\author[A,B]{\fnms{Yoshikazu}~\snm{Terada}\ead[label=e2]{terada.yoshikazu.es@osaka-u.ac.jp}}
%%%%%%%%%%%%%%%%%%%%%%%%%%%%%%%%%%%%%%%%%%%%%%
%% Addresses                                %%
%%%%%%%%%%%%%%%%%%%%%%%%%%%%%%%%%%%%%%%%%%%%%%
\address[A]{Graduate School of Engineering Science, The University of Osaka, Osaka, Japan\printead[presep={,\ }]{e1,e2}}

\address[B]{Center for Advanced Integrated Intelligence Research, RIKEN}
\end{aug}

\begin{abstract}
The importance of functional data analysis has increased substantially in recent years.
In machine learning, nonlinear function regression based on deep neural networks is referred to as operator learning, 
and many of its applications involve functional time series data.
However, the theoretical understanding of nonlinear models in functional time series analysis remains limited, 
as most existing works focus on linear models.
In this paper, we derive basic properties for analyzing adaptive learning in nonlinear functional autoregressive (NFAR) models.
Specifically, we derive sufficient conditions for NFAR models to be exponentially mixing.
We provide an example with a Hammerstein operator under which these conditions are satisfied.
As an application of exponential mixing, we consider operator learning for NFAR models with Urysohn operators
and derive convergence rates for adaptive estimators based on deep neural networks.
\end{abstract}

\begin{keyword}[class=MSC]
\kwd[Primary ]{62R10}
\kwd{37A25}
\kwd[; secondary ]{62G05}
\end{keyword}

\begin{keyword}
\kwd{functional data analysis}
\kwd{operator learning}
\kwd{nonparametric estimation} 
\kwd{deep neural networks}
\end{keyword}

\maketitle
\section{Introduction}\label{s1}

In recent years, advances in measurement technology have enabled the collection of data in which each observational unit is densely observed over a continuum, as in spectrometric or longitudinal studies.
Functional data analysis (FDA), introduced by \citet{ramsay1982when}, 
provides a systematic framework for analyzing such data.
In FDA, a functional time series is a time series whose observations are functions, 
arising in a wide range of applications.
For example, a year-long record of daily electricity demand yields a curve for each day, viewed as a function of time,
while a three-month record of daily temperature fields yields a surface for each day, viewed as a function of latitude and longitude.
The analysis of functional time series has attracted considerable attention
(\citealp{bosq2000linear}; \citealp{horvath2012inference}; \citealp{KokoszkaReimherr2017}).
% Representative contributions include \citet{bosq1991modelization}, 
% which proposed the functional autoregressive model 
% and provided an estimation method for the linear operator appearing in the model, 
% together with its consistency. 
% \citet{hormann2010weakly} introduced the $L^{p}$-$m$ approximation condition, 
% which quantifies temporal dependence in functional time series and 
% provides a theoretical basis for studying the asymptotic behavior of statistics constructed from functional time series.
% In addition, 
% \citet{aston2012detecting} studied the detection of change points in the mean function of functional time series. 
% Moreover, \citet{hormann2015dynamic} proposed dynamic FPCA, 
% a dimension reduction method specifically designed for functional time series. 
% For comprehensive references, 
% see \citet{bosq2000linear}, \citet{horvath2012inference}, and \citet{KokoszkaReimherr2017}.

One of the most important models for functional time series is the functional autoregressive (FAR) model.
Let $\mathcal{H} := \Bigl\{ f:[0,1]^{d}\to\mathbb{R}\ \Bigm|\ \int_{[0,1]^{d}} \lvert f(\bm{u}) \rvert^{2} d\bm{u} < \infty \Bigr\}$ and let $\{X_{t}\}_{t\in\mathbb{N}}$ be an $\mathcal{H}$-valued time series. 
The FAR model is defined by
\begin{align}
  X_{t+1} &= \Psi_{0}(X_{t}) + \xi_{t}, \quad t \in \mathbb{N}, \label{t_far}
\end{align}
where $\Psi_{0}$ is an operator from $\mathcal{H}$ to $\mathcal{H}$, 
and $\{\xi_{t}\}_{t\in\mathbb{N}}$ is a sequence of i.i.d.\ random elements in $\mathcal{H}$ with mean zero.
Most existing studies consider the case where $\Psi_{0}$ is a Hilbert-Schmidt operator.
% %When $\Psi_{0}$ is a Hilbert-Schmidt operator, a well-developed theory of statistical inference is available.
% For the estimation of $\Psi_{0}$, 
% \citet{bosq1991modelization} proposed a method combining the Yule-Walker approach with functional principal component analysis, 
% \citet{AntoniadisSapatinas2003} developed a wavelet-based estimator, 
% and \citet{KarginOnatski2008} proposed a method based on low-rank approximation via singular value decomposition. 
% Other contributions to statistical inference beyond estimation include \citet{horvath2010testing} and \citet{kokoszka2013determining}. 
Studies on the estimation of $\Psi_{0}$ include \citet{bosq1991modelization},
\citet{AntoniadisSapatinas2003}, and \citet{KarginOnatski2008}.
Statistical inference other than the estimation of $\Psi_{0}$, such as stability
testing and order determination, has also been actively studied; see, for example,
\citet{horvath2010testing} and \citet{kokoszka2013determining}.
However, classical FAR models are restricted to linear operators 
and thus have limited expressive power for temporal dynamics.
We consider a nonlinear functional autoregressive (NFAR) model in which $\Psi_{0}$ is a nonlinear operator.

NFAR models are closely related to operator learning, which has recently attracted considerable attention in machine learning. 
Operator learning aims to approximate (possibly nonlinear) operators via deep neural networks (DNNs).
For further details, we refer the reader to \citet{subedi2025operator}.
One practical advantage of these methods is that, after training, 
they can provide fast approximations to computationally expensive input-output maps between function spaces.
This feature is useful in the simulation and prediction of physical systems. 
A representative problem is to learn an operator that describes the time evolution of spatiotemporal data.
For example, \citet{Kissas2022Learning} proposed models based on Deep Operator Networks (DeepONets; \citealp{lu2021deeponet}) 
and Fourier neural operators (FNOs; \citealp{Li2021FNO}) for predicting flow past a cylinder, 
while \citet{pathak2022fourcastnet} applied an FNO-based architecture to global weather forecasting; see also \citet{choi2024applications}. 
Because these applications involve learning nonlinear maps from current or past spatial fields to future spatial fields, 
they can naturally be interpreted as learning nonlinear transition operators for functional time series. 
Accordingly, NFAR models are closely connected to practical applications of operator 
learning and are an important subject of study in both functional time series analysis and operator learning.
%However, in this case, the FAR model captures only relatively simple temporal dynamics. 
%We consider a functional autoregressive model with a nonlinear operator $\Psi_{0}$.
%In this work, we refer to the case where $\Psi_{0}$ is nonlinear as a nonlinear functional autoregressive (NFAR) model. 

% NFAR models are closely related to operator learning, 
% which has recently attracted considerable attention in machine learning.
% Operator learning aims to approximate (possibly nonlinear) operators via deep neural networks.
% Representative architectures include the Deep Operator Network (DeepONet) 
% proposed by \citet{lu2021deeponet} and the Fourier Neural Operator (FNO) proposed by \citet{Li2021FNO}. 
% Their advantages include the ability to learn mappings from input functions to output functions and fast inference.
% Owing to these practical advantages, operator learnings have been widely applied to the simulation and prediction of physical phenomena.
However, the theoretical foundations for estimating and learning $\Psi_{0}$ in NFAR models are still underdeveloped.
In the context of operator learning, 
recent studies have treated the problem of learning an unknown target operator from input--output pairs of functions 
as nonparametric regression between function spaces, 
and have derived generalization error bounds for the resulting estimators.
The available theory is largely restricted to independent and identically distributed observations,
 and temporally dependent data have received comparatively little attention.
For example, \citet{Reinhardt2024Operator} consider nonparametric regression between separable Hilbert spaces, 
a setting closely related to function-on-function regression in FDA.
They derive concentration inequalities for least-squares empirical risk minimizers over general operator classes.
Related studies have also developed generalization-error bounds for specific DNN-based operator estimators; 
see, for example, \citet{liu2024deep}.
These studies are also based on i.i.d.\ sampling schemes.
In the functional time series literature, only a few studies consider the estimation of $\Psi_{0}$ in NFAR models.
One representative study is \citet{ZhuPolitis2017}, 
which proposed a kernel estimator for $\Psi_{0}$ and established the asymptotic normality of the resulting estimator.
Their analysis, however, relies on the assumption that the mixing coefficients 
of the functional time series decay at a sufficiently fast rate.
As discussed below, verifying such mixing conditions for NFAR models is nontrivial and has not been systematically addressed. A detailed verification of this assumption was outside the scope of \citet{ZhuPolitis2017}.

% Mixing conditions refer to measures of the degree of dependence among observations in models with dependence, 
% such as time series models or spatial models. 
Mixing conditions are assumptions that characterize the strength of dependence among observations in dependent data settings, such as time series or spatial models.
For further details on mixing conditions, see \citet{rio2013inequalities}. 
In statistics, mixing conditions are applied to show asymptotic properties of nonparametric estimators, 
including deep learning-based methods, for samples with dependence
(\citealp{kreiss1998regression}; \citealp{tjostheim1994nonparametric}; \citealp{kurisu2024adaptive}).
Accordingly, the verification of mixing conditions has been studied extensively.
% For instance, \citet{chen2000geometric} gave sufficient conditions for exponential $\beta$-mixing properties of nonlinear AR models 
% in the finite dimensional case.
For instance, \citet{chen2000geometric} gave sufficient conditions under which nonlinear AR models are exponentially $\beta$-mixing in the finite-dimensional case.
Moreover, \citet{meyntweedie2009} and \citet{hairer2011harris} provided sufficient conditions for exponential $\beta$-mixing of a broader class of Markov chains.
However, it is nontrivial for stochastic processes taking values in infinite dimensional spaces to satisfy mixing conditions.  
This difficulty stems from the fact that probability measures on infinite dimensional spaces can be mutually singular.
For example, whether a stochastic partial differential equation (i.e., an infinite dimensional stochastic process) exhibits exponential mixing has typically been studied on a case-by-case basis (\citealp{Hairer2002ExponentialMixing,GoldysMaslowski2005ExponentialErgodicity}).  
Although broadly applicable conditions for guaranteeing exponential mixing in large classes of stochastic partial differential equations remain limited, similar difficulties also arise for NFAR models. 
While establishing sufficient conditions for NFAR models to be exponentially mixing is crucial for deriving the asymptotic behavior of estimators of $\Psi_{0}$ and the generalization error of learning methods, this problem has not been systematically addressed.

In this paper, we first establish sufficient conditions under which the FAR model \eqref{t_far} is exponentially $\beta$-mixing, 
including the case where $\Psi_{0}$ is nonlinear. 
As an application of this result, we derive an oracle inequality for operator learning methods targeting $\Psi_{0}$.
Furthermore, we consider 
the case in which $\Psi_{0}$ has the following form:
\begin{align}
     \Psi_{0}(x)(\bm{u})&= \int_{[0,1]^{d}}\psi_{0}(\bm{u},\bm{v},x(\bm{v}))d\bm{v},\quad \bm{u} \in [0,1]^{d},\,x \in \mathcal{H}. \nonumber
\end{align}
In this setting, $\Psi_{0}$ is referred to as a Urysohn operator and $\psi_{0}$ as a Urysohn kernel.
Urysohn operators are closely related to recent applications of operator learning to physical simulation. 
Indeed, solution operators for partial differential equations, such as the Burgers and Navier--Stokes equations, 
can often be represented or approximated, under suitable conditions, through integral equations or nonlocal integral operators. 
Such nonlinear integral operators can be naturally described within the Urysohn-operator framework. 
Thus, estimating the Urysohn kernel $\psi_0$, and hence the induced operator $\Psi_0$, 
is naturally connected to operator learning problems arising in physical simulation. 
This connection is also reflected in the neural integral equation framework of \citet{Zappala2024NeuralIntegralOperators}, 
where spatiotemporal data are formulated as solutions to integral equations and the integral terms are parameterized 
by neural networks and learned from data. Motivated by this connection, 
we propose an operator learning method for estimating $\Psi_0$ 
by approximating the Urysohn kernel $\psi_0$ with a DNN. We then use the derived oracle inequality to establish an upper bound on the generalization error of the proposed method.

In summary, the main contributions of this work are as follows:
\begin{itemize}
    \item We derive sufficient conditions under which NFAR models are exponentially $\beta$-mixing,
    including cases where $\Psi_{0}$ is nonlinear.
    \item As an application, we derive an oracle inequality for an estimator of $\Psi_{0}$ in \eqref{t_far} under the assumption that the NFAR model is exponentially $\beta$-mixing.
    \item We propose a novel operator learning model for NFAR models in which $\Psi_{0}$ is a Urysohn operator, and establish an upper bound on its generalization error. 
\end{itemize}
The rest of the paper is organized as follows. 
Section \ref{s2} introduces notation and assumptions for the nonlinear functional autoregressive model.
Section \ref{s3} provides sufficient conditions ensuring that NFAR models are exponentially $\beta$-mixing.
As an application, Section \ref{s4} develops an oracle inequality for estimators of $\Psi_{0}$ under suitable regularity and derives the corresponding convergence rates when the true operator is restricted to Urysohn operators 
and its Urysohn kernel is learned via ReLU  DNNs.
Moreover, Section \ref{s5} empirically investigates the generalization error via numerical experiments for the setting considered in Section \ref{s4}.
The appendix presents the proofs of the main results.

\section{Preliminaries}\label{s2}
\subsection{Notation}\label{s21}
% Let $\mathcal{H}$ be a separable Hilbert space.
% For any subset $A \subset \mathcal{H}$, we write $\overline{A}$ for its closure. 
% For an operator $\Phi : \mathcal{H} \to \mathcal{H}$, we write $\mathrm{Im}(\Phi)$ for its image.
% Let $\CL$ denote the space of all bounded linear operators from $\CH$ to $\CH$. 
% For $\Phi \in \mathcal{L}$, we denote its kernel and operator norm by 
% $\mathcal{K}(\Phi) := \{x \in \mathcal{H} \mid \Phi (x) = 0\}$ and 
% $\|\Phi\|_{\mathcal{L}} := \sup_{\left\| x \right\|_\CH = 1}\left\| \Phi (x) \right\|_{\mathcal{H}}$, respectively. 
% For any $f, g \in \mathcal{H}$, we define the function $f \otimes g : [0,1]^{d} \times [0,1]^{d} \to \mathbb{R}$ by
% $
% (f \otimes g)(u,v) = f(u)g(v), \,u,v \in [0,1]^{d},
% $
% and the operator $f \otimes g : \mathcal{H} \to \mathcal{H}$ by
% $
% (f \otimes g)(h) = f \langle g,h \rangle_{\mathcal{H}},\,h \in \mathcal{H}.
% $
For any subset $A \subset \mathcal{H}$, let $\overline{A}$ denote its closure.
For an operator $\Phi : \mathcal{H} \to \mathcal{H}$, let $\mathrm{Im}(\Phi)$ denote its image.
Let $\CL$ denote the space of all bounded linear operators from $\CH$ to $\CH$.
For $\Phi \in \mathcal{L}$, let
$\mathrm{Ker}(\Phi) := \{x \in \mathcal{H} \mid \Phi(x) = 0\}$ and
$\|\Phi\|_{\mathcal{L}} := \sup_{\|x\|_\CH = 1}\|\Phi(x)\|_{\mathcal{H}}$
denote its null space and operator norm, respectively.
% For any $f, g \in \mathcal{H}$, define the function
% $f \otimes g : [0,1]^{d} \times [0,1]^{d} \to \mathbb{R}$ by
% $
% (f \otimes g)(\bm{u},\bm{v}) = f(\bm{u})g(\bm{v}),\,\bm{u},\bm{v} \in [0,1]^{d},
% $ 
% and the operator $f \otimes g : \mathcal{H} \to \mathcal{H}$ by 
% $
% (f \otimes g)(h) = f \langle g,h \rangle_{\mathcal{H}},\,h \in \mathcal{H}
% $.
Let $(e_j)_{j\in \mathbb{N}}$ be an orthonormal basis of $\CH$ and let $\mathbb{T}$ denote the space of trace-class operators on $\mathcal{H}$:
    \begin{align}
        \T &:= \defset{
                T \in \CL \mid \sum_{j = 1}^{\infty}
                \innerprod{(TT^{*})^{\frac{1}{2}}e_{j}}{e_{j}}{\CH} < \infty 
        }, \nonumber
    \end{align}
    where $T^{*}:\mathcal{H} \to \mathcal{H}$ denotes the adjoint operator of $T$. 
For $T \in \mathbb{T}$, its trace norm is defined by 
    \begin{align}
        \normH{T}{\T}&:= \sum_{j = 1}^{\infty} \Bigl\langle (TT^{*})^{\frac{1}{2}}e_{j}, e_{j}\Bigr\rangle_{\mathcal{H}}. \nonumber
    \end{align}
For $R>0$, let $V_R:=\{x\in\mathcal{H}\mid \|x\|_{\mathcal{H}}\le R\}$ and define
\begin{align}
\mathcal{C}_b(\mathcal{H})
:=\Big\{\phi:\mathcal{H}\to\mathbb{R}\ \text{measurable}\ \Big|\ \sup_{x\in\mathcal{H}}|\phi(x)|<\infty\Big\},
\end{align}
equipped with the norm $\|\phi\|_{\mathcal{C}_b}:=\sup_{x\in\mathcal{H}}|\phi(x)|$.
Let $\borelH{\CH}$ be the Borel $\sigma$-algebra of $\CH$.
% For $\mu \in \mathcal{H}$ and $\mathcal{Q} \in \mathbb{T}$, we will denote by $\GPmeasure{\mu}{\mathcal{Q}}$ 
For $\mu \in \mathcal{H}$ and a self-adjoint, positive-definite $Q \in \mathbb{T}$,
we denote by $\GPmeasure{\mu}{Q}$ the Gaussian measure on $\measH{\CH}$ 
with mean $\mu$ and covariance operator $Q$.
% For $v \in \mathbb{R}^{r}$ and $i = 1,\dots,r$, we denote the $i$-th component by $v_{i}$ and define
% $
% \lvert v \rvert_{\infty} := \max_{1 \le i \le r} \lvert v_{i} \rvert
% $. 
For any $x \in \mathbb{R}$, let $\lfloor x \rfloor$ denote the integer satisfying $\lfloor x \rfloor \le x < \lfloor x \rfloor + 1$.
For $\bm{v} \in \mathbb R^r$, let $v_i$ denote its $i$-th component, $i=1,\dots,r$.
We denote by $\lvert \bm{v} \rvert_{0}$ the number of nonzero components of $\bm{v}$.
% For \(\bm{v}\in\mathbb{R}^r\), let \(\bm{v}_i\) denote its \(i\)-th component, \(i=1,\ldots,r\). 
We define $
\lvert \bm{v}\rvert_{\infty}
:=
\max_{1\le i\le r}\lvert v_i\rvert$.
% For $\bm{v} \in \mathbb{R}^{r}$, we denote by $\lvert \bm{v} \rvert_{0}$ the number of nonzero components of $\bm{v}$.
Let $U$ be a subset of $\mathbb{R}^{r}$.
% For functions $f,g: U \to \mathbb{R}$, we define the inner product by
% $
% \langle f,g\rangle_{L^{2}(U)} := \int_{U} f(\bm{u})g(\bm{u})\, d\bm{u},
% $
% and the $L^{2}(U)$ norm by
% $
% \left\| f \right\|_{L^{2}(U)} := \sqrt{\langle f,f\rangle_{L^{2}(U)}}
% $.
For a continuous function $f : U \to \mathbb{R}$, we define
$
\| f \|_{\infty} := \sup_{\bm{u} \in U} \lvert f(\bm{u})\rvert
$. 
For a continuous function $f : U \to \mathbb{R}^{d} ; \bm{u} \mapsto \left(f_{1}(\bm{u}),\dots,f_{d}(\bm{u})\right)^{\top}$, we define
$
\| f \|_{\infty} := \max_{1 \le i \le d} \| f_{i} \|_{\infty}
$.
Moreover, for an operator $\Psi : \mathcal{H} \to \mathcal{H}$, we define
$
\| \Psi \|_{\infty} := \sup_{x \in \mathcal{H}} \| \Psi(x) \|_{\mathcal{H}}
$. 
Let $M$ be any fixed positive constant. For any operator $\Psi : \mathcal{H} \to \mathcal{H}$, 
we define the operator $\bar{\Psi}^{(M)}$ by
\begin{align}
\bar{\Psi}^{(M)}(x) := \Psi(x) \bm{1}_{\{\left\| x \right\|_{\infty} \leq M \}},\quad x \in \mathcal{H}, \label{truncate}    
\end{align}
and for any function $\psi:[0,1]^{d} \times [0,1]^{d} \times \mathbb{R} \to \mathbb{R}$, we define $\bar{\psi}^{(M)}$ by
\[
\bar{\psi}^{(M)}(\bm{u},\bm{v},x):=\psi(\bm{u},\bm{v},x)\bm{1}_{[-M,M]}(x),
\qquad \bm{u},\bm{v}\in [0,1]^{d}, x\in\mathbb{R}.
\]
% In addition, for any continuous function $\psi : [0,1]^{d} \times [0,1]^{d} \times [-M,M] \to \mathbb{R}$, we will define its sup norm by $\left\| \psi \right\|_{\infty} := \sup_{\substack{\bm{u},\bm{v} \in D \\ x \in [-M,M]}}\lvert \psi(\bm{u}, \bm{v}, x) \rvert$.
In addition, for two sequences $(a_{n})$ and $(b_{n})$, if there exists a constant $C$ such that for all $n \in \mathbb{N}$, $a_{n} \leq C b_{n}$, we write $a_{n} \lesssim b_{n}$. 
Furthermore, if $a_{n} \lesssim b_{n}$ and $b_{n} \lesssim a_{n}$, we write $a_{n} \asymp b_{n}$.

\subsection{Nonlinear functional autoregressive model}\label{s22}

Let $(\Omega,\mathcal{I},\{\mathcal{F}_t\},\mathbb P)$ be a stochastic basis. 
We consider the following NFAR process $\{X_{t}\}_{t \in \mathbb{N}}$ taking values in $\mathcal{H}$:
\begin{align}
X_{t+1}(\bm{u})& = \Psi_{0}\left(X_{t}\right)(\bm{u}) + \xi_{t}(\bm{u}),\quad \bm{u} \in [0,1]^{d},\quad t \in \mathbb{N}, \label{N_far}
\end{align}
where $\Psi_{0}$ is a nonlinear operator from $\mathcal{H}$ to $\mathcal{H}$,
and $\{\xi_t\}_{t\in\mathbb N}$ is an i.i.d.\ sequence of $\mathcal{H}$-valued random elements with mean zero such that,
for each $t\in\mathbb N$, $\xi_t$ is independent of $\mathcal{F}_t$.
%Moreover, for each $t \in \mathbb{N}$, $\xi_{t}$ is independent of $X_{t},X_{t-1},\dots$. 
For notational simplicity, write $D := [0,1]^d$.
The covariance function $k: D \times D \to \mathbb{R}$ is defined by $k(\bm{u},\bm{v}) := \mathbb{E}\left[\xi_1(\bm{u})\xi_1(\bm{v})\right]$ for $\bm{u},\bm{v} \in D$.
We assume that $\int_{D}k(\bm{u},\bm{u})d\bm{u} < \infty$.
The covariance operator $Q : \mathcal{H} \to \mathcal{H}$ is defined by
\[
(Q f)(\bm{u}) := \int_{D} k(\bm{u}, \bm{v}) f(\bm{v}) \, d\bm{v}, \quad f \in \mathcal{H},\ \bm{u} \in D.
\]

Throughout this paper, we impose a Gaussianity assumption on the noise sequence $\{\xi_t\}_{t\in\mathbb N}$.
\begin{Assumption} \label{a1}
For each $t\in \mathbb{N}$, $\xi_t$ is distributed as $\mathcal{N}(\bm{0},Q)$.
% where $\mathcal{N}(\bm{0},Q)$ denotes a Gaussian measure on $\mathcal{H}$ with mean zero and covariance operator $Q$.
\end{Assumption}
\begin{Remark}\label{gauss_noise_reason}
The Gaussianity assumption plays a crucial role in the proofs of our main results. 
% To establish exponential mixing for the NFAR model, 
To establish that NFAR models are exponentially $\beta$-mixing, we adopt an approach analogous to that used for Markov chains in finite dimensional settings, which relies on the existence of the density ratio $dP_h/dP_0$ between the distributions of $\xi_t$ and $\xi_t + h$ for $h \in \mathcal{H}$. 
In infinite dimensional spaces, however, probability measures are often mutually singular, and hence such a density ratio need not exist.
On the other hand, when $\xi_t$ is Gaussian, Lemma~A.1 ensures the existence of $dP_h/dP_0$ under a suitable condition. 
Therefore, to ensure that the density ratio is well defined in the infinite dimensional setting, we impose \Aref{a1}.
\end{Remark}
% \begin{Assumption}\label{a2}
% $\overline{\mathrm{Image}(\mathcal{Q})} = \mathcal{H}$.  
% \end{Assumption} 

% \begin{Remark}
% Regarding \Aref{a2}, the condition $\overline{\mathrm{Image}(\mathcal{Q})} = \mathcal{H}$ 
% is equivalent to $\mathrm{Ker}(\mathcal{Q}) = \{0\}$. 
% A sufficient condition for this is that all eigenvalues of $\mathcal{Q}$ are strictly positive. 
% Moreover, it suffices that the covariance function \(k\) is continuous on \(D \times D\) to ensure \( \int_{[0,1]^d} k(\mathbf{u},\mathbf{u})\, d\mathbf{u} < \infty \).
% \end{Remark}
% 第一章で述べた通り本研究の主要な結果として関数型自己回帰モデルの exponential mixing を与える．
% これにより第 4章で与えられるような観測で明示的に記述することが難しい nonparametric な推定量
% の汎化誤差や Nadaraya Watson 型推定量などの代表的な推定手法の漸近分布を導出するための
% 確率論的な基盤を与えることができる．

\section{Main Results}\label{s3}

\subsection{Importance of the mixing property}\label{s30}

We first discuss the role of mixing properties in the theoretical analysis of nonparametric estimators for NFAR models.
To derive the generalization error of a nonparametric estimator of the true operator $\Psi_{0}$ constructed from a given time series $\boldsymbol{X}_{T+1} = \{X_{t}\}_{t = 1}^{T+1}$, 
it is necessary to bound the expectation of
\begin{align}
&\frac{1}{T}\sum_{t = 1}^{T}\left\| \Psi(X_{t}) - \Psi_{0}(X_{t}) \right\|_{\mathcal{H}}^{2}, \label{gen}
\end{align}
for $\Psi \in \mathcal{F}$, where $\mathcal{F}$ is a given hypothesis set.
In our setting, the sequence $\boldsymbol{X}_{T+1}$ exhibits temporal dependence, 
and we allow both $\Psi_{0}$ and $\Psi \in \mathcal{F}$ to be nonlinear. 
As a result, standard concentration inequalities are not directly applicable, 
and it is therefore difficult to bound the expectation in \eqref{gen}.
In such cases, a common approach is to introduce a simpler coupling sequence $\boldsymbol{\tilde{X}}_{T+1}=\{\tilde{X}_t\}_{t=1}^{T+1}$
that approximates the original sequence in a suitable sense.
The coupling sequence $\boldsymbol{\tilde{X}}_{T+1}$ has the same marginal distribution as $\boldsymbol{X}_{T+1}$ 
but exhibits substantially weaker dependence, thereby enabling the use of classical concentration inequalities. By introducing $\boldsymbol{\tilde{X}}_{T+1}$, we decompose \eqref{gen} as
\begin{align}
\sum_{t = 1}^{T}\left\| \Psi(X_{t}) - \Psi_{0}(X_{t}) \right\|_{\mathcal{H}}^{2}
=& \sum_{t = 1}^{T}\left\| \Psi(\tilde{X}_{t}) - \Psi_{0}(\tilde{X}_{t}) \right\|_{\mathcal{H}}^{2}\nonumber\\
&+ \sum_{t = 1}^{T}\left\{\left\| \Psi(X_{t}) - \Psi_{0}(X_{t}) \right\|_{\mathcal{H}}^{2} -
\left\| \Psi(\tilde{X}_{t}) - \Psi_{0}(\tilde{X}_{t}) \right\|_{\mathcal{H}}^{2} \right\}.
\end{align}
We then apply concentration inequalities to the first term, while bounding the approximation error
\begin{align}
&\left\| \Psi(X_{t}) - \Psi_{0}(X_{t}) \right\|_{\mathcal{H}}^{2} - \left\| \Psi(\tilde{X}_{t}) - \Psi_{0}(\tilde{X}_{t}) \right\|_{\mathcal{H}}^{2}. \label{approx_error}
\end{align}
The evaluation of the approximation error in \eqref{approx_error} depends on the specific construction of the coupling sequence $\boldsymbol{\tilde{X}}_{T+1}$.

Two representative frameworks for constructing coupling sequences are $L^{p}$-$m$ approximation \citep{hormann2010weakly} and mixing.
The $L^{p}$-$m$ approximation is convenient to use, since it applies to 
NFAR models under relatively mild conditions, such as the integrability of the noise $\{\xi_{t}\}_{t \in \mathbb{N}}$ and Lipschitz continuity of $\Psi_{0}$ with the Lipschitz constant strictly less than one.
% A limitation of the $L^{p}$-$m$ approximation is that it controls moments of $X_{t}-\tilde{X}_{t}$, 
However, it controls moments of $X_{t}-\tilde{X}_{t}$, 
but does not necessarily yield corresponding bounds for $\Psi(X_{t})-\Psi(\tilde{X}_{t})$. 
This issue is particularly pronounced when $\Psi$ lacks smoothness.
Such a situation arises, for example, when $\Psi_{0}$ is learned using methods based on DNNs, such as operator learning models.
The reason is that, in these settings, the approximation error between the DNN-based model and the true operator $\Psi_{0}$ is derived by considering the following hypothesis set, consisting of operators whose input domain is restricted to a bounded set:
\begin{align}
\mathcal{F}
= \left\{\Psi\bm{1}_{A} \mid \Psi \in \tilde{\mathcal{F}}\right\},
\nonumber
\end{align}
where $A$ is a bounded subset of $\mathcal{H}$ and $\tilde{\mathcal{F}}$ denotes the class of operators to which the estimator belongs.
This type of restriction is common in the analysis of generalization error for deep learning models.
For example, \citet{kurisu2024adaptive} derived generalization error bounds for deep learning methods applied to prediction in finite dimensional nonlinear AR models under the restriction that the input of the regression function is confined to a bounded set.
In this case, each $\Psi \in \mathcal{F}$ is discontinuous on $\mathcal{H}$.

Consequently, under $L^{p}$-$m$ approximation one typically needs to restrict attention to settings where these difficulties do not arise.
In contrast, under a mixing framework, for any bounded measurable $\Psi$,
we can derive a bound on the error $\Psi(X_{t}) - \Psi(\tilde{X}_{t})$ in terms of the error $X_{t} - \tilde{X}_{t}$.
Motivated by this advantage, we show in this section that NFAR models are exponentially $\beta$-mixing.
\subsection{Exponential $\beta$-mixing}\label{s31}
We show that NFAR models are exponentially $\beta$-mixing under suitable conditions.
%We establish these results using Markov chain theory.
First, we show exponential ergodicity of NFAR models and then derive the desired mixing result.
% An example of NFAR models satisfying exponential $\beta$-mixing is also provided.
We also provide an example satisfying these conditions.

We begin by introducing notation for Markov chains. 
Let $\{X_{t}\}_{t \in \mathbb{N}}$ be a time-homogeneous Markov chain on  the stochastic basis $(\Omega,\mathcal{I},\{\mathcal{F}_{t}\},\mathbb{P})$. 
For any $t \in \mathbb{N}$, define
    \begin{align}
        \mathcal{P}^{t}(x,A) := \mathbb{P}(X_{t+1} \in A \mid X_{1} = x),\quad A \in \mathcal{B}\left(\mathcal{H}\right), x \in \mathcal{H}, \nonumber
    \end{align}
    and 
    \begin{align}
     \left(\mathcal{P}^{t}\phi\right)(x)&:= \int_{\mathcal{H}} \phi(z) \mathcal{P}^{t}(x,dz),\quad \phi \in \mathcal{C}_{b}\left(\mathcal{H}\right), x \in \mathcal{H}. \nonumber   
    \end{align}
Moreover, define the Markov kernel
    \begin{align}
        \mathcal{P}\left(x,A\right)&:= \mathbb{P}\left(X_{2} \in A \mid X_{1} = x\right),\quad A \in \mathcal{B}\left(\mathcal{H}\right), x \in \mathcal{H}, \nonumber
    \end{align}
and  
    \begin{align}
        \left(\mathcal{P}\phi\right)(x) := \int_{\mathcal{H}} \phi(y)\mathcal{P}(x,dy),\quad \phi \in \mathcal{C}_{b}(\mathcal{H}), x \in \mathcal{H}. \nonumber
    \end{align}

We also define the $\beta$-mixing coefficient and exponential $\beta$-mixing. 
For $j \in \mathbb{N}$, the $\beta$-mixing coefficient $\beta(j)$
is defined by
\begin{align}
    \beta(j) := \sup_{n \in \mathbb{N}}\mathbb{E}\biggl[\sup_{B \in \mathcal{F}_{j+n}^{\infty}}
        \big| \mathbb{P}(B) - \mathbb{P}(B \mid X_{n},\dots,X_{1})\big|\biggr], \nonumber
\end{align}
where $\mathcal{F}_{j+n}^{\infty} := \sigma\!\left(\bigcup_{l = j+n}^{\infty}\sigma(X_{l})\right)$ for $n \in \mathbb{N}$. The Markov chain $\{X_{t}\}_{t \in \mathbb{N}}$ is said to be exponentially $\beta$-mixing if there exist positive constants $C_{1}$ and $C_{2}$ such that for any $j \in \mathbb{N}$,
\begin{align}
    &\beta(j) \leq C_{1}\exp(-C_{2}j).\nonumber
\end{align}

We impose the following assumptions on the covariance operator of the noise sequence $\{\xi_{t}\}_{t \in \mathbb{N}}$ and the operator $\Psi_{0}$. In the finite dimensional case, \Aref{a2} and \Aref{m1} are automatically satisfied, while the Gaussianity assumption in \Aref{a1} can be weakened to allow for a broader class of noise distributions; see \citet{chen2000geometric}.

\begin{Assumption}\label{a2}
$\overline{\mathrm{Im}(Q)} = \mathcal{H}$.  
\end{Assumption} 

\begin{Assumption}\label{m1}
    There exists a nonlinear operator $m_{0}:\mathcal{H}\to\mathcal{H}$ such that $\Psi_{0}=Q\circ m_{0}$.
\end{Assumption}

\begin{Assumption}\label{m2}
    Under \Aref{m1}, there exist constants $c_{1},c_{2}>0$ such that
\[
c_1\|Q\|_{\mathcal{L}}<1\;\text{ and }\;
\forall x\in\mathcal{H};\;\|m_{0}(x)\|_{\mathcal{H}} \le c_{1}\|x\|_{\mathcal{H}} + c_{2}.
\]
\end{Assumption}

\begin{Remark}
Regarding \Aref{a2}, the condition $\overline{\mathrm{Im}(Q)} = \mathcal{H}$ 
is equivalent to $\mathrm{Ker}(Q) = \{0\}$. 
A sufficient condition for this is that all eigenvalues of $Q$ are strictly positive. 
% Moreover, it suffices that the covariance function \(k\) is continuous on \(D \times D\) to ensure \( \int_{[0,1]^d} k(\mathbf{u},\mathbf{u})\, d\mathbf{u} < \infty \).
\end{Remark}

% We establish exponential ergodicity of the process \eqref{N_far} under \Aref{m1} and \Aref{m2}.
% Based on Theorem 1.2 of \citet{hairer2011harris}, 
% we establish exponential ergodicity for NFAR models.
Applying Lemma 2.4 in the supplementary material, a restatement of Theorem 1.2 of \citet{hairer2011harris}, we establish exponential ergodicity for NFAR models.

\begin{Theorem}[Exponential ergodicity]\label{exponential_ergodic}
Consider the time series $\{X_t\}_{t\in\mathbb N}$ defined by \eqref{N_far}.
Suppose that \Aref{a1}, \Aref{a2}--\Aref{m2} hold.
Then, $\{X_{t}\}_{t \in \mathbb{N}}$ admits a unique invariant distribution $\pi$. 
Moreover, there exist constants $\gamma \in (0,1)$ and $C>0$ such that, 
for any $t \in \mathbb{N}$
\begin{align}
        &\sup_{\substack{\phi \in \mathcal{C}_{b}(\mathcal{H})\\ 
        \left\| \phi \right\|_{\mathcal{C}_{b}(\mathcal{H})} \leq 1} }
        \left\lvert \left(\mathcal{P}^{t}\phi\right)(x) - \int_{\mathcal{H}}\phi(z)\pi(dz)\right\rvert \leq C\gamma^{t}(1 + \left\| x \right\|_{\mathcal{H}}), \quad x \in \mathcal{H}. \nonumber
\end{align}
\end{Theorem}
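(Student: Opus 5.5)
The plan is to verify the two hypotheses of Harris' theorem in the form of Lemma 2.4 (Theorem 1.2 of \citet{hairer2011harris}): a geometric drift (Lyapunov) condition and a minorization condition on sublevel sets of the Lyapunov function. The Lyapunov function will be $V(x) := 1 + \|x\|_{\mathcal{H}}$. Harris' theorem then gives a unique invariant measure $\pi$ and geometric convergence in the weighted total variation norm $\sup_{|\phi|\le V}|\mathcal{P}^t\phi(x)-\pi(\phi)|\le C\gamma^t V(x)$. Restricting to $\|\phi\|_{\mathcal{C}_b}\le 1\le V$ yields exactly the stated bound with $1+\|x\|_{\mathcal{H}}$.

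\textbf{Drift.} By \Aref{m1}--\Aref{m2},
\[
\mathcal{P}V(x) = 1 + \mathbb{E}\|Q m_0(x) + \xi_1\|_{\mathcal{H}} \le 1 + \|Q\|_{\mathcal{L}}\bigl(c_1\|x\|_{\mathcal{H}} + c_2\bigr) + \mathbb{E}\|\xi_1\|_{\mathcal{H}}.
\]
The last expectation is finite because $\mathbb{E}\|\xi_1\|^2_{\mathcal{H}} = \int_D k(\bm{u},\bm{u})\,d\bm{u} < \infty$. Setting $\gamma_0 := c_1\|Q\|_{\mathcal{L}} < 1$, this gives $\mathcal{P}V \le \gamma_0 V + K$ for a constant $K$, which is the drift condition.

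\textbf{Minorization.} Fix $R > 2K/(1-\gamma_0)$ and consider the level set $\{V\le R\}\subset V_R$. We need $\delta>0$ and a probability measure $\nu$ with $\mathcal{P}(x,\cdot)\ge\delta\nu$ for all $\|x\|\le R$. An equivalent form that suffices is
\[
\sup_{x,y\in V_R}\|\mathcal{P}(x,\cdot)-\mathcal{P}(y,\cdot)\|_{TV}\le 2(1-\delta).
\]
Now $\mathcal{P}(x,\cdot)=\mathcal{N}(Qm_0(x),Q)$. Write $h_x := Qm_0(x)$, so $h_x \in \mathrm{Im}(Q)\subset \mathrm{Im}(Q^{1/2})$, the Cameron--Martin space. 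By the Cameron--Martin theorem (Lemma A.1), $\mathcal{N}(h_x,Q)$ is equivalent to $\mathcal{N}(0,Q)$, and its density is explicit:
\[
\frac{d\mathcal{N}(h_x,Q)}{d\mathcal{N}(0,Q)}(z)=\exp\Bigl(\langle Q^{-1/2}h_x, Q^{-1/2}z\rangle-\tfrac12\|Q^{-1/2}h_x\|^2\Bigr).
\]
Here $Q^{-1/2}h_x = Q^{1/2}m_0(x)$, which is well defined since $\mathrm{Ker}(Q)=\{0\}$ by \Aref{a2}. Its norm is controlled by
\[
\|Q^{1/2}m_0(x)\|\le\|Q\|^{1/2}_{\mathcal{L}}\bigl(c_1R+c_2\bigr)=:r \quad\text{for all } x\in V_R.
\]
Take $\nu:=\mathcal{N}(0,Q)$ restricted to a set where the density is bounded below. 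Under $\nu$, the quantity $\langle g, Q^{-1/2}z\rangle$ is a centered Gaussian with variance $\|g\|^2$. Alternatively, apply the Hellinger or Pinsker bound for Gaussian shifts:
\[
\|\mathcal{N}(h_x,Q)-\mathcal{N}(h_y,Q)\|_{TV}= 2\bigl(2\Phi(\|Q^{1/2}(m_0(x)-m_0(y))\|/2)-1\bigr)\le 2\bigl(2\Phi(r)-1\bigr)<2.
\]
This gives $\delta = 2-2\Phi(r) > 0$, uniformly over $x,y\in V_R$.

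\textbf{Main obstacle.} The delicate step is the minorization in infinite dimensions. Equivalence of Gaussian measures requires the shift $h_x$ to lie in the Cameron--Martin space $Q^{1/2}\mathcal{H}$, not merely in $\mathcal{H}$. Moreover, the TV distance must be uniformly bounded away from $2$ on the level set. \Aref{m1} secures the first requirement and \Aref{m2} the second, but only through the Cameron--Martin norm $\|Q^{1/2}m_0(x)\|$. I expect care to be needed with measurability and with the meaning of $Q^{-1/2}z$ as a $\nu$-a.s.\ defined linear functional (a white-noise map). I would handle this via Lemma A.1 and the exact TV formula for shifted Gaussians, obtained by projecting onto the direction $Q^{1/2}(m_0(x)-m_0(y))$. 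Once both conditions hold, Lemma 2.4 immediately gives uniqueness of $\pi$ and the exponential bound with some $\gamma\in(0,1)$ and $C>0$.
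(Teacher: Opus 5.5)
Your proof is correct in substance, but at the minorization step it takes a different route from the paper. The paper verifies condition (B) of Lemma~\ref{harrier_theorem} literally. By Lemma~\ref{CM_ver2_appendix}, the density of $\mathcal{P}(x,\cdot)$ with respect to $\mu_Q=\mathcal{N}(0,Q)$ is $\exp\{\langle m_0(x),h\rangle_{\mathcal{H}}-\tfrac12\langle Qm_0(x),m_0(x)\rangle_{\mathcal{H}}\}$, which is a \emph{continuous} function of $h$. Your worry about $Q^{-1/2}z$ being defined only almost surely therefore disappears, since $\langle Q^{1/2}m_0(x),Q^{-1/2}z\rangle=\langle m_0(x),z\rangle_{\mathcal{H}}$. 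By \Aref{m2}, this density is bounded below by a constant $e^{-\epsilon}$ uniformly in $x,h\in V_R$. This gives the genuine minorization $\mathcal{P}(x,\cdot)\ge e^{-\epsilon}\mu_Q(V_R)\,\nu_R(\cdot)$ with $\nu_R:=\mu_Q(\cdot\cap V_R)/\mu_Q(V_R)$.

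You instead bound pairwise total variation distances through the exact formula for Gaussian shifts. That formula uses only that the Cameron--Martin norm $\|Q^{1/2}m_0(x)\|_{\mathcal{H}}$ is bounded on $V_R$, so your route is more robust. It would survive weakening \Aref{m1} to $\Psi_0(x)\in Q^{1/2}\mathcal{H}$ with $\|Q^{-1/2}\Psi_0(x)\|_{\mathcal{H}}$ bounded on bounded sets. In that setting the log-density is merely a measurable linear functional, and the paper's lower bound on an $\mathcal{H}$-ball does not apply directly.

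The price is that you are no longer checking the hypothesis of Lemma~\ref{harrier_theorem} as stated. You call $\sup_{x,y\in V_R}\|\mathcal{P}(x,\cdot)-\mathcal{P}(y,\cdot)\|_{\mathrm{TV}}\le 2(1-\delta)$ an ``equivalent form'' of minorization, and that is not right in general. Minorization implies the pairwise bound, but pairwise overlaps within a family of measures need not share a common minorant. What is true is that the Hairer--Mattingly argument uses minorization only through this pairwise bound. In their contraction estimate, a maximal coupling of $\mathcal{P}(x,\cdot)$ and $\mathcal{P}(y,\cdot)$ yields the same inequality. You should therefore either cite that version of Harris' theorem or include this one-line coupling step. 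Alternatively, once you notice the continuity of the density, you can fall back on the paper's direct minorization. Your drift step with $V=1+\|x\|_{\mathcal{H}}$ in place of the paper's $\|x\|_{\mathcal{H}}$ is fine.
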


Based on Theorem \ref{exponential_ergodic}, it follows that the time series $\{X_{t}\}_{t \in \mathbb{N}}$ defined by \eqref{N_far} is exponentially $\beta$-mixing
under \Aref{a1} and \Aref{a2}--\Aref{m2}:
\begin{Corollary}[Exponential $\beta$-mixing]\label{beta_mixing_for_far_chapter3}
% Suppose that \Aref{a1}, \Aref{a2} -- \Aref{m2} hold.
% Then the functional time series $\{X_t\}_{t\in\mathbb{N}}$ defined by \eqref{N_far}
% is exponentially $\beta$-mixing.
Suppose that \Aref{a1} and \Aref{a2}--\Aref{m2} hold. 
Assume further that the time series
$\{X_t\}_{t \in \mathbb{N}}$ defined by \eqref{N_far} is stationary. 
Then, $\{X_t\}_{t \in \mathbb{N}}$ is exponentially $\beta$-mixing.
\end{Corollary}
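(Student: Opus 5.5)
The plan is to reduce the $\beta$-mixing coefficient of the stationary Markov chain to the total-variation distance between the $j$-step transition kernel and the invariant law, and then apply Theorem~\ref{exponential_ergodic} with the supremum over $\mathcal{C}_b$ test functions. The reduction rests on the Markov property. Conditionally on $X_1,\dots,X_n$, the law of the future $(X_{j+n},X_{j+n+1},\dots)$ depends only on $X_n$. It is obtained by starting the chain from $\mathcal{P}^{j}(X_n,\cdot)$ and running the kernel $\mathcal{P}$ forward. By stationarity, the unconditional law of the future is obtained in the same way, starting from $\pi$ (the stationary marginal must coincide with the unique invariant law of Theorem~\ref{exponential_ergodic}). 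Since both path laws are built from the same transition kernel, one has, for every $B\in\mathcal{F}_{j+n}^{\infty}$,
\begin{align}
\bigl|\mathbb{P}(B)-\mathbb{P}(B\mid X_n,\dots,X_1)\bigr|
\le \sup_{A\in\mathcal{B}(\mathcal{H})}\bigl|\mathcal{P}^{j}(X_n,A)-\pi(A)\bigr|. \nonumber
\end{align}
To justify this, write $B$ as a cylinder event and use $\mathbb{P}(B\mid X_{j+n}=y)=g_B(y)$, where $g_B$ is measurable with values in $[0,1]$. Then $\mathbb{P}(B\mid X_{1:n})=\int g_B\,d\mathcal{P}^{j}(X_n,\cdot)$ and $\mathbb{P}(B)=\int g_B\,d\pi$. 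A monotone class argument extends the bound from cylinders to all of $\mathcal{F}^\infty_{j+n}$.

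Next I bound the total-variation distance using Theorem~\ref{exponential_ergodic}. The class $\mathcal{C}_b(\mathcal{H})$ in the paper consists of all bounded measurable functions, not only continuous ones. So the indicator $\phi=\bm 1_A$, or equivalently the function $2\bm 1_A-1$, is admissible with norm at most $1$, and
\[
\sup_A|\mathcal{P}^{j}(x,A)-\pi(A)|\le C\gamma^{j}(1+\|x\|_{\mathcal{H}}).
\]
The same holds for $g_B$ directly, which makes the reduction even cleaner. Taking expectations gives
\[
\beta(j)\le C\gamma^j\bigl(1+\mathbb{E}\|X_n\|_{\mathcal{H}}\bigr)=C\gamma^j\Bigl(1+\int\|z\|_{\mathcal{H}}\,\pi(dz)\Bigr),
\]
where the last step uses stationarity. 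This bound is uniform in $n$. Setting $C_2=-\log\gamma$ then yields exponential $\beta$-mixing, provided the first moment of $\pi$ is finite.

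The main obstacle I expect is this finite first moment, $\int\|z\|\,\pi(dz)<\infty$. I would establish it from the drift condition that underlies Theorem~\ref{exponential_ergodic}. By Assumptions~\ref{m1}--\ref{m2},
\[
\mathbb{E}\bigl[\|X_{t+1}\|\mid X_t=x\bigr]\le \|Q\|_{\mathcal{L}}\bigl(c_1\|x\|+c_2\bigr)+\mathbb{E}\|\xi_1\|,
\]
and $\mathbb{E}\|\xi_1\|\le(\mathrm{tr}\,Q)^{1/2}<\infty$. Setting $V(x)=1+\|x\|$, this gives $\mathcal{P}V\le \kappa V+K$ with $\kappa=c_1\|Q\|_{\mathcal{L}}<1$. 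Integrating against $\pi$ requires a truncation to avoid $\infty-\infty$. I would apply the inequality to $V\wedge N$, or iterate $\mathcal{P}^tV(x_0)\le\kappa^tV(x_0)+K/(1-\kappa)$ and use the weak convergence $\mathcal{P}^t(x_0,\cdot)\to\pi$ together with Fatou's lemma. Either route gives $\int V\,d\pi\le K/(1-\kappa)$. A second point to check is that the stationary marginal equals $\pi$; this follows from the uniqueness part of Theorem~\ref{exponential_ergodic}.
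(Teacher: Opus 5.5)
Your proof is correct. Its skeleton is the paper's: bound $\beta(j)$ by $\int\|\mathcal{P}^{j}(x,\cdot)-\pi\|_{\mathrm{TV}}\,\pi(dx)$, apply Theorem~\ref{exponential_ergodic}, and integrate $C\gamma^{j}(1+\|x\|_{\mathcal{H}})$ against $\pi$. Applying the theorem to indicators, or directly to $g_B$, is legitimate because $\mathcal{C}_b(\mathcal{H})$ consists of all bounded measurable functions. The differences lie in how the two ingredients are obtained.

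For the reduction, the paper cites Davydov's identity (Lemma~\ref{ergodic_to_mixing}), which gives an equality. You derive only the inequality you need, directly from the Markov property. You also make explicit that the stationary marginal equals $\pi$ by uniqueness, which the paper uses tacitly.

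For the first moment, the paper's Lemma~\ref{moment_control_appendix} goes through Meyn--Tweedie. It upgrades the linear drift to (V4) with a petite set and invokes the $f$-norm ergodic theorem. That theorem presupposes $\psi$-irreducibility, aperiodicity and petiteness of the sublevel sets of $\|\cdot\|_{\mathcal{H}}$. All of these hold here thanks to the minorization (B), but the paper does not verify them.

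Your route is self-contained: iterate $\mathcal{P}^{t}V(x_0)\le\kappa^{t}V(x_0)+K/(1-\kappa)$, then pass to the limit using the convergence from Theorem~\ref{exponential_ergodic} together with Fatou or monotone convergence. It also gives the explicit bound $\int(1+\|x\|_{\mathcal{H}})\,\pi(dx)\le K/(1-\kappa)$, hence an explicit $C_1$.

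One small caution about your alternative truncation route. $V\wedge N$ does not satisfy the drift inequality pointwise on $\{V>N\}$. After using invariance of $\pi$, you must cancel the common finite term $N\,\pi(V>N)$ to obtain $(1-\kappa)\int_{\{V\le N\}}V\,d\pi\le K$, and only then let $N\to\infty$. With that extra line this route works as well.
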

% \begin{Remark}
% In many cases, the existence of a stationary distribution follows from \Aref{m2} alone. 
% To establish exponential ergodicity in total variation, however, an additional condition such as \Aref{m1} is required.
% This is because we consider the infinite-dimensional state space $\mathcal{H}$, where probability measures are often mutually singular.
% In particular, for two points $x_{1},x_{2}\in\mathcal{H}$, the transition kernels $\mathcal{P}(x_{1},\cdot)$ and $\mathcal{P}(x_{2},\cdot)$ may be mutually singular.
% Such singularity precludes exponential ergodicity, which requires that the total variation distance between
% $\mathcal{P}(x_{1}, \cdot)$ and $\mathcal{P}(x_{2}, \cdot)$ contracts at a uniform rate for any $x_{1},x_{2}\in\mathcal{H}$.
% \Aref{m1} ensures the overlap between $\mathcal{P}(x_{1}, \cdot)$ and $\mathcal{P}(x_{2}, \cdot)$
% for any $x_{1},x_{2} \in \mathcal{H}$ and hence yields uniform contraction.
% \end{Remark}
\begin{Remark}
In many cases, the existence of a stationary distribution follows from \Aref{m2} alone. 
To establish exponential ergodicity in total variation, however, additional conditions such as 
\Aref{a1}, \Aref{a2} and \Aref{m1} are required. 
This is because we consider the infinite dimensional state space $\mathcal{H}$, 
where probability measures are often mutually singular 
as stated in Remark \ref{gauss_noise_reason}.
In particular, for two points $x_{1},x_{2}\in\mathcal{H}$, 
the transition kernels $\mathcal{P}(x_{1},\cdot)$ and $\mathcal{P}(x_{2},\cdot)$ may be mutually singular.
Such singularity precludes exponential ergodicity, which requires that the total variation distance between
$\mathcal{P}(x_{1}, \cdot)$ and $\mathcal{P}(x_{2}, \cdot)$ contracts at a uniform rate for any $x_{1},x_{2}\in\mathcal{H}$.
\Aref{a1}, \Aref{a2} and 
\Aref{m1} ensure sufficient overlap between $\mathcal{P}(x_{1}, \cdot)$ and $\mathcal{P}(x_{2}, \cdot)$
for any $x_{1},x_{2} \in \mathcal{H}$ and hence yield uniform contraction.
\end{Remark}

\begin{Example}\label{example_mixing}
We now present a class of examples for which $\Psi_{0}$ satisfies \Aref{m1}.
In particular, we focus on the case where $\Psi_{0}$ is of the form
\begin{align}
    \Psi_{0}(x)(\bm{u}) := \int_{D} f(\bm{u}, \bm{v})\tau\left(\bm{v}, x(\bm{v})\right)d\bm{v}, 
    \quad \bm{u} \in D, x \in \mathcal{H}, \nonumber
\end{align}
where $f: D \times D \to \mathbb{R}$ and $\tau: D \times \mathbb{R} \to \mathbb{R}$. 
This class is a special case of the class of operators considered in Section~\ref{s42}. 
We investigate conditions on $f$ and $\tau$ under which \Aref{m1} holds. 

To this end, we introduce some notation.
Let $\{\lambda_{i}\}_{i = 1}^{\infty}$ be the sequence of eigenvalues of the covariance operator $Q$ of the noise, satisfying $\sum_{i = 1}^{\infty}\lambda_{i} < \infty$, and let $\{e_{i}\}_{i = 1}^{\infty} \subset \mathcal{H}$ be the corresponding eigenfunctions.
We define the sequence $\{f_{ij}\}_{i,j \in \mathbb{N}}$ by
$f_{ij} := \int_{D \times D} f(\bm{u},\bm{v})e_{i}(\bm{u})e_{j}(\bm{v})d\bm{u}d\bm{v}$.
We also define, for each $i \in \mathbb{N}$, an operator $\tau_{i}:\mathcal{H} \to \mathbb{R}$ by
$\tau_{i}(x) := \langle \tau\left(\cdot, x(\cdot)\right), e_{i} \rangle_{\mathcal{H}}, \, x \in \mathcal{H}.$
We further define a function $\tilde{k}: D \times D \to \mathbb{R}$ by
\begin{align}
    &\tilde{k}(\bm{u},\bm{v}) = \sum_{i = 1}^{\infty}\lambda_{i}^{2}e_{i}(\bm{u})e_{i}(\bm{v}
    ), \quad \bm{u},\bm{v} \in D. \nonumber
\end{align}
Let $\mathcal{H}_{\tilde{k}}$ denote the reproducing kernel Hilbert space associated with $\tilde{k}$. We then consider the tensor product space $\mathcal{H}_{\tilde{k}} \otimes \mathcal{H}_{\tilde{k}}$, which is given by 
\begin{align}
    \mathcal{H}_{\tilde{k}} \otimes \mathcal{H}_{\tilde{k}}
    &:=\left\{
    f: D \times D \to \mathbb{R} \;\middle|\;
    \sum_{i,j = 1}^{\infty}\frac{f_{ij}^{2}}{\lambda_{i}^{2}\lambda_{j}^{2}} < \infty 
    \right\}. \nonumber
\end{align}
With this notation, we now identify conditions on 
$f$ and $\tau$.
Noting that the operator $\Psi_{0}$ can be written as
\begin{align}
    \Psi_{0}(x)(\bm{u}) & = \sum_{i = 1}^{\infty}\left(\sum_{j = 1}^{\infty}f_{ij}\tau_{j}(x)\right)e_{i}(\bm{u}), \quad \bm{u} \in D, x \in \mathcal{H}, \nonumber
\end{align}
we define, for each $i$, an operator $g_{i}:\mathcal{H} \to \mathbb{R}$ by
$
g_{i}(x) := \sum_{j = 1}^{\infty} f_{ij}\tau_{j}(x)\; (x \in \mathcal{H}).
$
A sufficient condition for \Aref{m1} to hold is that, for all $x \in \mathcal{H}$,
\begin{align}
    \sum_{i = 1}^{\infty}\frac{g_{i}^{2}(x)}{\lambda_{i}^{2}} < \infty. \label{temp_g}
\end{align}
Indeed, if \eqref{temp_g} holds for all $x \in \mathcal{H}$, then the operator $m_{0} : \mathcal{H} \to \mathcal{H}$ is well defined by
$m_{0}(x) := \sum_{i = 1}^{\infty}\left(g_{i}(x)/\lambda_{i}\right)e_{i},\,x \in \mathcal{H}$ 
and with $m_{0}$ one has $\Psi_{0} = Q \circ m_{0}$, from which \Aref{m1} follows.
We therefore seek conditions on $f$ and $\tau$ sufficient for \eqref{temp_g} to hold.
Note that, for any $x \in \mathcal{H}$,
\begin{align}
    \sum_{i = 1}^{\infty}\frac{g_{i}^{2}(x)}{\lambda_{i}^{2}}
    \leq \sum_{i,j = 1}^{\infty}\frac{f_{ij}^{2}}{\lambda_{i}^{2}}\tau_{j}^{2}(x).\nonumber
\end{align}
Therefore, \Aref{m1} holds if
\begin{align}
    \sum_{i,j = 1}^{\infty}\frac{f_{ij}^{2}}{\lambda_{i}^{2}} < \infty, \quad
    \sup_{x \in \mathcal{H}}\left(\sum_{i = 1}^{\infty}\tau_{i}^{2}(x)\right) < \infty. \label{f_tau_condition}
\end{align}
For instance, \eqref{f_tau_condition} holds if
$
f \in \mathcal{H}_{\tilde{k}} \otimes \mathcal{H}_{\tilde{k}}$ and $\sup_{\bm{u} \in D, x \in \mathbb{R}} \lvert \tau(
\bm{u},x)\rvert < \infty$.
The condition
$
f \in \mathcal{H}_{\tilde{k}} \otimes \mathcal{H}_{\tilde{k}}
$ 
can be interpreted as a smoothness assumption on $f$.
\end{Example}

\section{Applications of exponential mixing}\label{s4}
% In this section, as an application of exponential mixing for NFAR models, 
% we study a regression problem with infinite-dimensional inputs and outputs, 
% as introduced in \eqref{far_intro} in Section \ref{s1}. 
In this section, we use the exponential $\beta$-mixing property of NFAR models to derive an oracle inequality for learning methods of the true operator $\Psi_{0}$. 
% In Section \ref{s40}, we describe the setting for \eqref{far_intro}.
% In Section \ref{s41}, we focus on the estimation of $\Psi_{0}$.
% In particular, we derive an upper bound on the generalization gap of an estimator taking values in a given hypothesis set.
In Section \ref{s41}, we derive an upper bound on the generalization error of an estimator taking values in a given hypothesis set.
In Section \ref{s42}, we address more practical settings. 
We restrict attention to the case where the true operator $\Psi_{0}$ is a Urysohn operator and estimate $\Psi_{0}$ by learning its Urysohn kernel using DNNs.

As noted at the beginning of Section \ref{s30}, we derive an upper bound on the generalization error of estimators of $\Psi_{0}$ by adopting a hypothesis set whose input domain is restricted to a bounded subset of $\mathcal{H}$. 
This restriction is necessary to derive approximation error bounds for deep learning models, as such models are unable to represent complex functions in regions far from the origin. 
Indeed, many results on the approximation and generalization error of deep learning impose boundedness assumptions on the input domain (\citealp{DeVore2021NeuralNetworkApproximation};
\citealp{Liu2025DeepNeuralNetworksAdaptive}; \citealp{schmidt2019nonparametric}). 
Accordingly, in Section \ref{s4} we impose boundedness on the input domain of the hypothesis set. 
As discussed in Section \ref{s3}, this restriction necessitates the mixing assumption.
\subsection{Nonparametric estimator for a nonlinear functional autoregressive model}\label{s41} 
In this section, we derive an oracle inequality for estimators of the true operator $\Psi_{0}$
 in \eqref{N_far}. We first introduce the statistical setting and additional assumptions. Throughout Section 4, we consider the time series $\{X_{t}\}_{t = 1}^{T + 1}$ defined by \eqref{N_far} and impose the following assumptions:
\begin{Assumption}\label{a3}
    For each $t \in \mathbb{N}$, the map $\bm{u} \mapsto X_{t}(\bm{u})$ is continuous on $D$.
\end{Assumption}

\begin{Assumption}\label{a4}
    The time series $\{X_{t}\}_{t \in \mathbb{N}}$ is exponentially $\beta$-mixing; that is, its $\beta$-mixing coefficient $\beta(j)$ satisfies
\begin{align}
    \beta(j)& \leq C_{1, \beta}\exp(-C_{2,\beta}j) \nonumber
\end{align}
for some positive constants $C_{1,\beta}$ and $C_{2, \beta}$ and for all $j \in \mathbb{N}$.
\end{Assumption}

\begin{Remark}
Corollary \ref{beta_mixing_for_far_chapter3} in Section \ref{s31} provides a sufficient condition for \Aref{a4}. Moreover, a concrete example of an operator $\Psi_{0}$ satisfying \Aref{m1} is given in Example \ref{example_mixing}.
\end{Remark}

We also introduce a hypothesis set and related assumptions.
Let $\mathcal{F}$ be a class of measurable mappings from $\mathcal{H}$ to $\mathcal{H}$, which serves as a hypothesis set.
We impose the following assumptions on $\mathcal{F}$. 
\begin{Assumption}\label{a6} There exists a positive constant $M > 0$ such that
for any $\Psi \in \mathcal{F}$ and $x \in \mathcal{H}$ with $\|x\|_{\infty} > M$, we have $\Psi(x) = 0$.
\end{Assumption}
\begin{Assumption}\label{a7} 
The metric space $(\mathcal{F}, \|\cdot\|_{\infty})$ is compact.
\end{Assumption}

For any estimator $\widehat{\Psi}_{T}$ taking values in $\mathcal{F}$, we also introduce its generalization error.  
% As noted at the beginning of Section~\ref{s4}, our analysis is carried out on a bounded subset of $\mathcal{H}$.
% In particular, throughout this section we work with sets that are bounded with respect to the sup norm
% $\|\cdot\|_{\infty}$.
% Accordingly, we also assume that the true operator $\Psi_{0}$ is supported on the same bounded set.
As noted at the beginning of Section 4, our analysis is carried out on a subset of $\mathcal H$ that is bounded with respect to the sup norm $\|\cdot\|_\infty$. Accordingly, we compare $\hat{\Psi}_T$ with the truncated operator $\bar{\Psi}_0^{(M)}$ on the same bounded domain.
We therefore define the generalization error of $\widehat{\Psi}_{T}$ by
\begin{align}
R\left(\widehat{\Psi}_{T},\bar{\Psi}_{0}^{(M)}\right)&:= 
  \mathbb{E}\left[\frac{1}{T}\sum_{t = 1}^{T}\left\| \widehat{\Psi}_{T}\left(X_{t}^{\prime}\right) - \bar{\Psi}_{0}^{(M)}\left(X_{t}^{\prime}\right) \right\|_{\mathcal{H}}^{2}\right], \label{generalization_gap_def}
\end{align}
where $M$ is the constant appearing in \Aref{a6}, and $\{X_{t}^{\prime}\}_{t = 1}^{T+1}$ is an independent copy of $\{X_{t}\}_{t = 1}^{T+1}$ with the same distribution. If $\{X_{t}^{\prime}\}_{t = 1}^{T+1}$ and $\{X_{t}\}_{t = 1}^{T+1}$ are stationary, we can write
 \begin{align}
R\left(\widehat{\Psi}_{T},\bar{\Psi}_{0}^{(M)}\right)& = \mathbb{E}\left[\left\| \widehat{\Psi}_{T}\left(X_{1}^{\prime}\right) - \bar{\Psi}_{0}^{(M)}\left(X_{1}^{\prime}\right) \right\|_{\mathcal{H}}^{2}\right].\nonumber 
 \end{align}
In addition, for any $\Psi \in \mathcal{F}$, define the empirical risk by 
\begin{align}
    \hat{\ell}_{T}(\Psi)& = \frac{1}{T}\sum_{t = 1}^{T}\left\| X_{t+1} - \Psi(X_{t}) \right\|_{\mathcal{H}}^{2}. \nonumber
\end{align}
For any $\Psi_{1}, \Psi_{2} \in \mathcal{F}$, define
\begin{align}
 \Delta\left(\Psi_{1},\Psi_{2}\right)& := 
 \mathbb{E}\left[ 
  \hat{\ell}_{T}\left(\Psi_{1}\right)
  - \hat{\ell}_{T}\left(\Psi_{2}\right)\right], \nonumber
\end{align}
Furthermore, for any $\Psi \in \mathcal{F}$ define
\begin{align}
  \Delta(\Psi)&:= \mathbb{E}\left[\hat{\ell}_{T}(\Psi) - \min_{\Phi \in \mathcal{F}}\hat{\ell}_{T}(\Phi)\right]. \nonumber
\end{align}
Note that the existence of a minimizer is guaranteed by \Aref{a7}.

\begin{Remark}
% In this study, we restrict the domain of operators $\Psi \in \mathcal{F}$
% with respect to the sup norm $\|\cdot\|_{\infty}$ rather than the
% $\mathcal{H}$-norm $\|\cdot\|_{\mathcal{H}}$.
% As a concrete setting of \eqref{N_far},
% we consider the case studied in Section \ref{s42},
% where the true operator $\Psi_{0}$ is a Urysohn operator.
% The estimator is obtained by learning its Urysohn kernel,
% which is a real-valued function defined on a Euclidean space.
% In this setting, bounding the covering number of $\mathcal{F}$
% reduces to evaluating the complexity of models consisting of
% real-valued functions defined on a Euclidean space.
% Consequently, boundedness must be imposed pointwise over $D$, rather than in terms of the $L^{2}$-type 
% average defined by integration over $D$. For this reason, we restrict the domain of operators in $\mathcal{F}$
% to be bounded with respect to the sup norm $\|\cdot\|_{\infty}$.
In this study, we work with a sup-norm bounded domain for operators $\Psi \in \mathcal{F}$, 
rather than an $\mathcal{H}$-norm bounded one. 
This choice is natural for the setting in Section~\ref{s42}, 
where the true operator $\Psi_{0}$ is given by a Urysohn operator. 
The estimator is obtained by learning its Urysohn kernel, which is a real-valued function on a Euclidean space. 
Therefore, evaluating the approximation error of the operator 
learning model reduces to evaluating the approximation error of deep learning models for finite dimensional functions. 
Consequently, boundedness must be imposed pointwise on $D$, rather than through an $L^{2}$-type average over $D$. 
\end{Remark}
We introduce the following notation. For $\delta>0$, a finite set $\tilde{\mathcal{F}}\subset \mathcal{F}$ is called a $\delta$-covering 
of $\mathcal{F}$ with respect to $\|\cdot\|_{\infty}$ if, for every $\Psi \in \mathcal{F}$, 
there exists $\Phi \in \tilde{\mathcal{F}}$ such that $\|\Psi - \Phi\|_{\infty} \leq \delta$.
The minimal cardinality of such a $\delta$-covering 
is the covering number of $\mathcal{F}$ with respect to $\|\cdot\|_{\infty}$, 
denoted $N(\delta,\mathcal{F},\|\cdot\|_{\infty})$.
In addition, under \Aref{a1}, there exists a constant $K'>0$ such that
\begin{align}
    \mathbb{E}\left[\exp\left\{\|\xi\|_{\mathcal{H}}^{2}/(K')^{2}\right\}\right] \leq 2, \label{Kdash}
\end{align}
where $\xi$ is distributed as $\xi_1$. Since $\xi$ is a Gaussian process, the existence of such a constant $K'$ follows from Lemma 2.8 in the supplementary material. 

The following theorem provides an upper bound for the generalization error of 
the estimator. 

\begin{Theorem}[Oracle inequality]\label{thm1}
Consider the NFAR model \eqref{N_far}.
Suppose that \Aref{a1}, \Aref{a3}, and \Aref{a4} hold, and that $\mathcal{F}$ satisfies \Aref{a6}--\Aref{a7}.
Let $\delta > 0$ and assume that
$
N(\delta,\mathcal{F},\|\cdot\|_{\infty}) < \infty
$. Let $\mathcal{N}_{\delta}$ be an integer such that $\mathcal{N}_{\delta} \geq N(\delta,\mathcal{F},\|\cdot\|_{\infty}) \lor \exp(10)$. Let $\widehat{\Psi}_{T}$ be any estimator taking values in $\mathcal{F}$. 
Let $M$ be the constant in \Aref{a6}, and suppose that
$\|\bar{\Psi}_{0}^{(M)}\|_{\infty} \leq F$ for some $F\geq 1$.
Let $C_{1,\beta}$ and $C_{2,\beta}$ be the constants appearing in \Aref{a4}. Then, for any $\varepsilon\in(0,1)$, there exists a constant $C_{\varepsilon}>0$,
depending only on $(\varepsilon,K,C_{1,\beta},C_{2,\beta})$, where
$
K := \max\!\left\{
\sqrt{\|Q\|_{\mathcal{L}}},
\sqrt{\|Q\|_{\mathbb{T}}},
K',1
\right\},
$
such that for any $T\in\mathbb{N}$ with $\log T\geq 2$ and
$C_{2,\beta}\geq 2\log T/T$,
\begin{align}
R\left(\widehat{\Psi}_{T},\bar{\Psi}_{0}^{(M)}\right)
\lesssim
\frac{1+\varepsilon}{1-\varepsilon}
\left(
\Delta(\widehat{\Psi}_{T})
+
\inf_{\Psi\in\mathcal{F}}
R\left(\Psi,\bar{\Psi}_{0}^{(M)}\right)
\right)
+
C_{\varepsilon}F^{2}
\frac{(\log \mathcal{N}_{\delta})(\log T)}{T}.
\label{variance}
\end{align}
\end{Theorem}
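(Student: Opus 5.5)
The plan is to follow the standard oracle-inequality scheme for least squares with dependent data (as in Schmidt-Hieber's i.i.d.\ lemma adapted to $\beta$-mixing, cf.\ \citet{kurisu2024adaptive}). Combined with Berbee's coupling, this replaces the dependent sample by independent blocks.

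\textbf{Basic decomposition.} For any fixed $\Psi\in\mathcal{F}$, write $X_{t+1}=\Psi_0(X_t)+\xi_t$. Expanding the squares gives
\begin{align}
\hat{\ell}_T(\Psi)-\hat{\ell}_T(\Psi_0)=\frac1T\sum_{t=1}^T\|\Psi(X_t)-\Psi_0(X_t)\|_{\mathcal H}^2-\frac2T\sum_{t=1}^T\langle \xi_t,\Psi(X_t)-\Psi_0(X_t)\rangle_{\mathcal H}. \nonumber
\end{align}
By \Aref{a6}, $\Psi(X_t)=\Psi(X_t)\bm 1_{\{\|X_t\|_\infty\le M\}}$, and on the complement both $\Psi$ and $\bar\Psi_0^{(M)}$ vanish. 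So the empirical loss relative to $\Psi_0$ equals the truncated error $\|\Psi(X_t)-\bar\Psi_0^{(M)}(X_t)\|^2$ plus terms that do not depend on $\Psi$; those terms cancel in differences. Applying this to $\widehat\Psi_T$ and to an arbitrary $\Psi^*\in\mathcal F$, then taking expectations, bounds $\mathbb E[\frac1T\sum\|\widehat\Psi_T(X_t)-\bar\Psi_0^{(M)}(X_t)\|^2]$ by $\Delta(\widehat\Psi_T)+R(\Psi^*,\bar\Psi_0^{(M)})$, plus twice the cross term $\mathbb E[\frac1T\sum\langle\xi_t,\widehat\Psi_T(X_t)-\bar\Psi_0^{(M)}(X_t)\rangle]$. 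The independence of $\xi_t$ from $\mathcal F_t$ makes the cross term vanish for fixed $\Psi$, but not for the data-dependent $\widehat\Psi_T$.

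\textbf{Discretization and the cross term.} Let $\Psi_j$ be a $\delta$-net of size $\mathcal N_\delta$ and choose $j^*$ with $\|\widehat\Psi_T-\Psi_{j^*}\|_\infty\le\delta$. The cross term is then at most
\begin{align}
\mathbb E\Bigl[\max_j \frac{1}{T}\sum_t\frac{\langle\xi_t,\Psi_j(X_t)-\bar\Psi_0^{(M)}(X_t)\rangle}{\sqrt{\cdot}}\Bigr]\times\sqrt{\cdot}+\delta\,\mathbb E\|\xi\|. \nonumber
\end{align}
For each $j$, the summands are martingale differences given $\mathcal F_t$, conditionally sub-Gaussian with scale $K\|\Psi_j(X_t)-\bar\Psi_0^{(M)}(X_t)\|$ by \eqref{Kdash}. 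A self-normalized maximal inequality over $\mathcal N_\delta$ elements, followed by $2ab\le\varepsilon a^2+b^2/\varepsilon$, yields $\varepsilon\times(\text{empirical error})+C_\varepsilon F^2\log\mathcal N_\delta/T$. No mixing is needed at this step, because the martingale structure suffices.

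\textbf{From empirical to generalization error (main obstacle).} We must compare $\mathbb E[\frac1T\sum g(X_t)]$ with $\mathbb E[\frac1T\sum g(X'_t)]$, uniformly over the class $g_j=\|\Psi_j-\bar\Psi_0^{(M)}\|^2$. These functions are bounded by $4F^2$ but discontinuous, which is exactly where \Aref{a4} enters. I would split $\{1,\dots,T\}$ into alternating blocks of length $\ell\asymp \log T/C_{2,\beta}$. The condition $C_{2,\beta}\ge 2\log T/T$ guarantees $\ell\le T/2$. Berbee's lemma provides independent copies of the blocks whose total coupling error is at most $(T/\ell)\beta(\ell)\lesssim C_{1,\beta}T^{-1}$, so the bounded summands contribute $O(F^2/T)$. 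On the independent blocks, Bernstein's inequality for the block sums uses the variance-to-mean bound $\mathrm{Var}\le 4F^2\ell\cdot\mathbb E$ together with a union bound over the $\mathcal N_\delta$ net elements. This gives, for each $j$, $|\hat R_j-R_j|\le\varepsilon R_j+C_\varepsilon F^2\ell\log\mathcal N_\delta/T$; the $\ell$ factor is the source of the extra $\log T$. The net approximation costs $O(F\delta)$, which is absorbed provided $\delta\lesssim 1/T$; the statement implicitly allows this through the choice of $\mathcal N_\delta$, since otherwise a $\delta$-term would appear. Collecting everything gives $(1-\varepsilon)R\le(1+\varepsilon)(\Delta+\inf R)+C_\varepsilon F^2(\log\mathcal N_\delta)(\log T)/T$, and the condition $\mathcal N_\delta\ge e^{10}$ absorbs the constant terms into the logarithm.
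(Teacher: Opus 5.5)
Your proposal is essentially the paper's proof. The decomposition with a $\delta$-net and a de la Pe\~na-type self-normalized martingale bound for the cross term is Lemma~\ref{lemmac2}, and your normalization by the predictable proxy $K^{2}\sum_{t}\|\Psi_j(X_t)-\bar{\Psi}_{0}^{(M)}(X_t)\|_{\mathcal{H}}^{2}$ (legitimate because these increments are conditionally Gaussian given $\mathcal{F}_t$) even avoids the paper's truncation of $\|\xi_t\|_{\mathcal{H}}$ at $\sqrt{2(K')^{2}\log T}$. The Berbee-coupled alternating blocks of length $a_T=\lfloor C_{2,\beta}^{-1}\log T\rfloor$ with Bernstein and a union bound over the net are Lemma~\ref{lemmac1}, and the paper likewise silently drops the $\delta$-terms, which needs $\delta\lesssim 1/T$, as you noticed.

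One slip to fix: for this block length $(T/a_T)\beta(a_T)$ is of order $1/a_T$, not $T^{-1}$. The coupling error therefore has to be charged in expectation block by block, $\mu_T\cdot 4F^{2}a_T\beta(a_T)/T\lesssim F^{2}\beta(a_T)\lesssim F^{2}/T$, which is how the paper does it.
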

The above theorem is an infinite dimensional extension of Theorem~3.1 in \citet{kurisu2024adaptive}. 
The proof is deferred to the supplementary material.

\subsection{Estimation via deep neural networks}\label{s42}
In this section, we address more practical settings. We will restrict attention to the case where the true operator $\Psi_{0}$ in \eqref{N_far} takes the form
\begin{align}
 \Psi_{0}(x)(\bm{u})&:= \int_{D}\psi_{0}(\bm{u},\bm{v},x(\bm{v}))\,d\bm{v},\quad \bm{u} \in D, x \in \mathcal{H}, \label{Uryshorn}
\end{align}
where $\psi_{0}:D \times D \times \mathbb{R} \to \mathbb{R}$ is a continuous function.
We estimate $\Psi_{0}$ by learning $\psi_{0}$ using DNNs.
We first introduce the notation used in this section. Let $M > 0$. 
% For a function $\psi: D \times D \times [-M,M] \to \mathbb{R}$, we define the operator $\Psi_{\psi}: \mathcal{H} \to \mathcal{H}$ by
% \begin{align}
%     \Psi_{\psi}\!\left(x\right)(\bm{u})
%     := \int_{D}\psi\!\left( \bm{u}, \bm{v}, x\left(\bm{v}\right) \right)d\bm{v}\bm{1}_{\{\left\|x\right\|_{\infty} \leq M\}},
%     \qquad \bm{u} \in D, x \in \mathcal{H}. \label{PSI_psi}
% \end{align}
For a function $\psi : D \times D \times [-M,M] \to \mathbb{R}$,
we define the operator $\Psi_\psi : \mathcal{H} \to \mathcal{H}$ by
\begin{align}
\Psi_\psi(x)(\mathbf{u})
:=\left\{
\begin{array}{cc}
\int_D \psi(\mathbf{u},\mathbf{v},x(\mathbf{v}))\,d\mathbf{v},
    & \text{if } \|x\|_\infty \le M,
\\0,& \text{if } \|x\|_\infty > M,
\end{array}
\right., \quad \mathbf{u}\in D,\ x\in \mathcal{H}. \label{PSI_psi}
\end{align}
When $\Psi_{0}$ is of the form \eqref{Uryshorn}, the truncated operator
$\bar{\Psi}_{0}^{(M)}$ satisfies 
$\bar{\Psi}_{0}^{(M)} = \Psi_{\bar{\psi}_{0}^{(M)}}$
for any $M > 0$.
Although $\bar{\psi}_0^{(M)}$ is originally defined on $D \times D \times \mathbb{R}$, it is identically zero whenever $\lvert x \rvert > M$. Therefore, 
we identify $\bar{\psi}_0^{(M)}$ with its restriction to $D \times D \times [-M,M]$.
Accordingly, throughout this part, we use the notation $\Psi_{\bar{\psi}_{0}^{(M)}}$ for $\bar{\Psi}_{0}^{(M)}$. For a bounded set $U \subset \mathbb{R}^{r}$, we define the ball of $\beta$-Hölder functions with radius $R$ by
    \begin{align}
        &\mathcal{C}_{r}^{\beta}(U, R) := \left\{f : U \to \mathbb{R} \mid \sum_{\bm{\alpha}:\left\lvert \bm{\alpha} \right \rvert_{1} < \beta} \left\| \partial^{\bm{\alpha}} f \right\|_{\infty} + \sum_{\bm{\alpha}:\lvert \bm{\alpha} \rvert = \lfloor \beta \rfloor} \sup_{\substack{\bm{x},\bm{y} \in U\\ \bm{x} \neq \bm{y}}}\frac{\lvert \partial^{\bm{\alpha}}f (\bm{x}) - \partial^{\bm{\alpha}}f(\bm{y})}{\lvert \bm{x} - \bm{y}\rvert_{\infty}^{\beta - \lfloor \beta \rfloor} }\leq R \right\}, \nonumber
    \end{align}
    where we use the multi-index notation, i.e.,
$\partial^{\bm{\alpha}} = \partial^{\alpha_1} \cdots \partial^{\alpha_r}$
with $\bm{\alpha} = (\alpha_1,\ldots,\alpha_r) \in \mathbb{Z}_{\ge 0}^r$
and $\lvert \bm{\alpha} \rvert_{1} := \sum_{j=1}^r\alpha_j$.
    For $q \in \mathbb{N}$, $\bm{d} := (d_{0},d_{1},\dots,d_{q}) \in \mathbb{N}^{q + 1}$, $\bm{t} := (t_{0},t_{1},\dots,t_{q}) \in \mathbb{N}^{q + 1}$ with $t_{i} < d_{i}$ for all $i$, $\bm{\beta} := (\beta_{0},\dots,\beta_{q}) \in (0,\infty)^{q + 1}$, and $R>0$,  we define the class of composite functions $\mathcal{C}\left(q,\bm{d},\bm{t},\bm{\beta},R\right)$ by
    \begin{align}
\mathcal{C}\left(q,\bm{d},\bm{t},\bm{\beta},R\right)&:= \left\{
        f = g_{q} \circ \dots \circ g_{1} \,\biggm|\,  
      \begin{matrix}
       g_{i} = (g_{ij})_{j} : [a_{i},b_{i}]^{d_{i}} \to [a_{i + 1},b_{i + 1}]^{d_{i + 1}},\\
         g_{ij} \in C_{t_{i}}^{\beta_{i}}\left([a_{i},b_{i}]^{t_{i}},R\right)
          \mathrm{for}\,\mathrm{some}\,\lvert a_{i} \rvert, \lvert b_{i} \rvert \leq R
        \end{matrix}
        \right\}.\nonumber
    \end{align}
We next describe the architecture of DNNs.
The network architecture $(L,\bm{p})$ consists of a positive integer $L$ called the number of hidden layers
or depth and a width vector $\bm{p} = \left(p_{0},\dots,p_{L+1}\right) \in \mathbb{N}^{L+2}$. 
A DNN with network architecture $(L,\bm{p})$ is then any function of the form 
\begin{align} 
    &f:\mathbb{R}^{p_{0}} \to \mathbb{R}^{p_{L + 1}}: x \mapsto (A_{L + 1}\circ\sigma_{L}\circ A_{L}\circ \sigma_{L-1}\circ \cdots \circ \sigma_{1}\circ A_{1})(x) \label{DNN_form}
\end{align}
where 
$A_{l} : \mathbb{R}^{p_{l-1}} \to \mathbb{R}^{p_{l}}; x \mapsto W_{l}x + \bm{b}_{l}$ is an affine linear map 
for given weight matrix $W_{l}$ and a shift vector $\bm{b}_{l} \in \mathbb{R}^{p_{l}}$,
and $\sigma_{l}:\mathbb{R}^{p_{l}} \to \mathbb{R}^{p_{l}}$ is an element-wise nonlinear activation function map 
defined as $\sigma_{l}(\bm{z}) := \left(\sigma(\bm{z}_{1}),\dots,\sigma(\bm{z}_{p_{l}})\right)^{\top}, \bm{z} \in \mathbb{R}^{p_{l}}$
with ReLU activation function $\sigma(x):= \max\{x,0\}, x \in \mathbb{R}$.
For a neural network of the form \eqref{DNN_form}, 
we define $\theta(f):= \left(\mathrm{vec}(W_{1})^{\top},\bm{b}_{1}^{\top},\dots,\mathrm{vec}(W_{L+1})^{\top},\bm{b}_{L+1}^{\top}\right)^{\top}$
where $\mathrm{vec}(W)$ transforms the matrix $W$ into the corresponding vector by concatenating the column vectors.
We let $\mathcal{F}_{p_{0}, p_{L+1}}$ be the class of DNNs which take $p_{0}$-dimensional input to 
$p_{L+1}$-dimensional output with the ReLU activation function. 

In this section, we focus on estimating the function $\bar{\psi}_{0}^{(M)}:D \times D \times [-M,M] \to \mathbb{R}$. Accordingly, we set $p_{0} = 2d+1$ and $p_{L+1} = 1$.
For positive constants $M, B, F$ and positive integers $L,N$, we set 
\begin{align}
\mathcal{F}_{p_{0}}^{(M)}(L,N,B) &:= \left\{f\bm{1}_{D \times D \times [-M,M]} \mid f \in \mathcal{F}_{p_{0},1}:\mathrm{depth}(f) \leq L,\,\mathrm{width}(f) \leq N,\,\lvert \theta(f) \rvert_{\infty} \leq B\right\}, \nonumber
\end{align}
and 
\begin{align}
\mathcal{F}_{p_{0}}^{(M)}(L,N,B,F) &:= \left\{ f \in \mathcal{F}_{p_{0}}^{(M)}(L,N,B) \mid \left\| f \right\|_{\infty} \leq F\right\}. \nonumber
\end{align}
Moreover, we define a class of sparsity constrained DNNs with sparsity level $S > 0$ by
\begin{align}
\mathcal{F}_{p_{0}}^{(M)}(L,N,B,F,S):= \left\{ f \in \mathcal{F}_{p_{0}}^{(M)}(L,N,B,F) \mid \lvert \theta(f) \rvert_{0} \leq S\right\}. \nonumber
\end{align}

We adopt as 
the hypothesis set the operator family $\bm{G} := \left(\Psi_{\psi}\right)_{\psi \in \mathcal{G}}$ with $\mathcal{G} := \mathcal{F}_{p_{0}}^{(M)}(L,N,B,F,S)$.
% Specifically, the operator learning model $\Psi_{\psi}$ with $\psi \in \mathcal{G}$ takes as input $X \in \mathcal{H}$ with $\|X\|_{\infty} \leq M$ and $u \in D$, and produces, for each $v \in D$, the value $\psi(\bm{u}, \bm{v}, X(\bm{v})) \in \mathbb{R}$ (See Figure \ref{Architecture}). 
% The final output is obtained by averaging over $v \in D$, that is,
% $
% \int_{D}\psi(\bm{u},\bm{v},X(\bm{v}))\,d\bm{v}
% $.
Specifically, for an input pair $(\bm{u},x) \in D \times \mathcal{H}$ 
with $\|x\|_{\infty} \le M$, the model first computes $\psi(\bm{u},\bm{v},x(\bm{v}))$ for each $\bm{v} \in D$ (See Figure \ref{Architecture}), and then outputs
$\Psi_{\psi}(x)(\bm{u}) = \int_{D}\psi(\bm{u},\bm{v},x(\bm{v}))d\bm{v}$.
We estimate the true operator $\Psi_{0}$ via the estimator
\begin{align}
    &\widehat{\Psi}_{T} := \Psi_{\widehat{\psi}_{T}},\quad \widehat{\psi}_{T} := \arg\min_{\psi \in \mathcal{G}}\frac{1}{T}\sum_{t = 1}^{T}\left\| X_{t+1} - \Psi_{\psi}(X_{t}) \right\|_{\mathcal{H}}^{2}. \label{kernelNN}
\end{align}
% The architecture of $\Psi_{\psi} \in \mathcal{G}$, represented via $\psi \in \mathcal{G}$ introduced above, is illustrated as follows:
% \begin{align}
%     \Psi(X)(\bm{u}) : (\bm{u}, X) \mapsto \left[\int_{D}\psi\left(\bm{u},\bm{v},X(\bm{v})\right)d\bm{v}\right]\bm{1}_{\left\{\left\| X \right\|_{\infty} \leq M\right\}}, \quad \bm{u} \in D, \; X \in \mathcal{H}, \nonumber
% \end{align}
% and the architecture of $\psi \in \mathcal{G}$ is illustrated below.
To derive an upper bound on the generalization error of $\widehat{\Psi}_{T}$, we impose an additional assumption on the true function $\psi_{0}$:
\begin{Assumption}\label{m3}
Let $M \geq 1$. There exist $q \in \mathbb{N}$, $\bm{d} \in \mathbb{N}^{q+1}$ with $d_{0} = 2d+1$ and $d_{q} = 1$, $\bm{t} \in \mathbb{N}^{q+1}$, and $\bm{\beta} \in (0,\infty)^{q+1}$ such that $\overline{\psi}_{0}^{(M)} \in \mathcal{C}(q, \bm{d}, \bm{t}, \bm{\beta}, M)$.
\end{Assumption}
An upper bound on the generalization error of $\widehat{\Psi}_{T}$ is given in Theorem \ref{convergence_rate}.

\begin{figure}[tb]
\centering
\includegraphics[scale=0.60]
{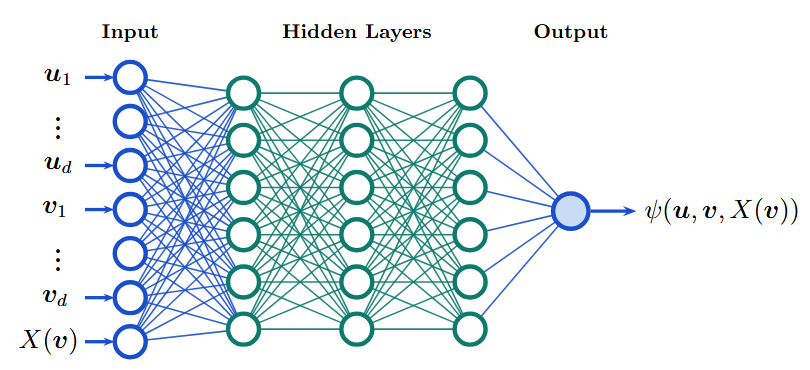}
\caption{The learning architecture of $\mathcal{G}$ for  Urysohn kernels}
\label{Architecture}
\end{figure}

\begin{Theorem}\label{convergence_rate}
    Suppose that \Aref{a1}, \Aref{a3} and \Aref{a4} hold for the model \eqref{N_far}.
Let $M \geq 1$ and $B \geq 1$ be positive constants. Let $L,S$ as well as $N$ be any integers 
    with $L \asymp \log N, S \asymp N\log N$ and 
    $N \asymp \max_{0 \leq i \leq q} T^{\frac{t_{i}}{2\beta_{i} + t_{i}}}$. 
    Suppose further that the true Urysohn kernel $\bar{\psi}_{0}^{(M)}$ satisfies \Aref{m3}.
    Then, the estimator $\widehat{\Psi}_{T}$, defined in \eqref{kernelNN}, satisfies
    \begin{align}
&R(\widehat{\Psi}_{T},\Psi_{\bar{\psi_{0}}^{(M)}})\lesssim (\log T)^{4}\max_{0 \leq i \leq q}T^{-\frac{2\beta_{i}}{2\beta_{i} + t_{i}}}. \nonumber
    \end{align}
\end{Theorem}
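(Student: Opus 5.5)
The plan is to apply the oracle inequality, Theorem~\ref{thm1}, with the hypothesis set $\mathcal{F}=\bm{G}=(\Psi_\psi)_{\psi\in\mathcal{G}}$. Since $\widehat{\Psi}_T$ is an exact empirical risk minimiser over $\bm{G}$, the optimisation term vanishes: $\Delta(\widehat{\Psi}_T)=0$. It then remains to bound two quantities:
\begin{itemize}
\item[(a)] the approximation term $\inf_{\psi\in\mathcal{G}}R(\Psi_\psi,\Psi_{\bar\psi_0^{(M)}})$;
\item[(b)] the entropy term $\log\mathcal{N}_\delta$ for a suitable $\delta$, for instance $\delta=1/T$.
\end{itemize}
We fix $\varepsilon=1/2$.

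\textbf{Checking the hypotheses of Theorem~\ref{thm1}.}
\Aref{a6} holds by construction of $\Psi_\psi$ in \eqref{PSI_psi}, since $\Psi_\psi(x)=0$ whenever $\|x\|_\infty>M$.

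For \Aref{a7}, the key is the pointwise estimate
\[
\|\Psi_\psi-\Psi_{\psi'}\|_\infty \le \sup_{\bm u,\bm v,|x|\le M}|\psi-\psi'|=\|\psi-\psi'\|_\infty ,
\]
which follows because $D$ has unit volume, so the $\mathcal{H}$-norm of the integral is bounded by the sup norm of the integrand. The parameter set $\{\theta: |\theta|_\infty\le B,\ |\theta|_0\le S\}$ is compact. The map $\theta\mapsto f_\theta$ is continuous in sup norm on the bounded domain $D\times D\times[-M,M]$. The constraint $\|f\|_\infty\le F$ is a closed condition. Hence $\mathcal{G}$, and therefore $\bm{G}$, is compact.

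For the constant in Theorem~\ref{thm1}, the same unit-volume argument gives $\|\bar\Psi_0^{(M)}\|_\infty\le \|\bar\psi_0^{(M)}\|_\infty\le M$. So we may take $F\ge M\vee 1$, a constant.

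\textbf{Entropy term (b).}
By the Lipschitz estimate above, $N(\delta,\bm G,\|\cdot\|_\infty)\le N(\delta,\mathcal{G},\|\cdot\|_\infty)$. The standard sparse ReLU covering bound (Schmidt-Hieber; Kurisu et al.) gives
\[
\log N(\delta,\mathcal{G},\|\cdot\|_\infty)\lesssim S L\log\bigl(\delta^{-1}L N B\bigr).
\]
There is one subtlety: the covering in that bound is over the whole parameter-induced class. To keep the covering centres inside $\mathcal{G}$, we pass to a $2\delta$-covering with centres in the class, which is a standard step. Plugging in $\delta=1/T$, $L\asymp\log N$, $S\asymp N\log N$ and $\log N\lesssim\log T$ yields
\[
\log\mathcal{N}_\delta\lesssim N(\log T)^3 .
\]
The variance term is therefore of order $N(\log T)^4/T$. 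With $N\asymp\max_i T^{t_i/(2\beta_i+t_i)}$, this equals $(\log T)^4\max_i T^{-2\beta_i/(2\beta_i+t_i)}$.

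\textbf{Approximation term (a).}
For any $\psi$, the unit-volume estimate gives
\[
R(\Psi_\psi,\Psi_{\bar\psi_0^{(M)}})\le \|\psi-\bar\psi_0^{(M)}\|_\infty^2 ,
\]
so it suffices to approximate the composite function uniformly. I would invoke the composite Hölder approximation theorem for ReLU networks of Schmidt-Hieber (Theorem 5 there), or its width-based version in Kurisu et al., after the affine rescaling of $D\times D\times[-M,M]$ to $[0,1]^{2d+1}$. This provides $\psi^*$ with depth $\lesssim\log N$, width $\lesssim N$, sparsity $\lesssim N\log N$ and weights bounded by $1\vee B$, such that
\[
\|\psi^*-\bar\psi_0^{(M)}\|_{\infty}\lesssim \max_i N^{-\beta_i^*/t_i},
\]
where $\beta_i^*$ are the effective smoothness exponents. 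Clipping the output to $[-F,F]$ keeps $\psi^*$ inside $\mathcal{G}$. The squared error is then $\max_i T^{-2\beta_i/(2\beta_i+t_i)}$ up to logarithmic factors, which balances with the variance term.

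\textbf{Main obstacle.}
I expect the main difficulty to be matching the approximation theorem to the class exactly. Two points need care:
\begin{itemize}
\item the truncation indicator in $\bar\psi_0^{(M)}$ creates a jump at $|x|=M$, so we approximate only on $[-M,M]$, where the indicator equals one and the function is the composite Hölder function;
\item the exponents $\beta_i$ must be interpreted as the effective smoothness indices $\beta_i\prod_{l>i}(\beta_l\wedge1)$, as in Schmidt-Hieber, or else justified directly via the assumed $\mathcal{C}(q,\bm d,\bm t,\bm\beta,M)$ structure.
\end{itemize}
Everything else, namely the oracle inequality, the mixing assumption \Aref{a4}, and the sub-Gaussian constant $K$, is absorbed into constants.
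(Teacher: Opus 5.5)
Your proposal is correct and follows the paper's proof essentially step for step. The paper also applies Theorem~\ref{thm1} to $\bm{G}$ with $\delta=1/T$, dropping the optimisation term since $\widehat{\Psi}_T$ is an exact minimiser; it checks \Aref{a6}--\Aref{a7} through continuity of $\psi\mapsto\Psi_\psi$ (Lemma~\ref{check_a6_and_a7}), transfers a sparse-ReLU entropy bound from $\mathcal{G}$ to $\bm{G}$ via $\|\Psi_{\psi_1}-\Psi_{\psi_2}\|_\infty\le\|\psi_1-\psi_2\|_\infty$ (Lemma~\ref{covering_for_dnn}), and bounds the bias by $\inf_{\psi\in\mathcal{G}}\|\psi-\bar\psi_0^{(M)}\|_\infty^2$ combined with Schmidt-Hieber's composite approximation result (Lemma~\ref{bias_for_dnn}). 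Your caveat about effective smoothness is well taken: the paper cites the same eq.~(26) of Schmidt-Hieber but writes $\beta_i$ where that result gives $\beta_i\prod_{l>i}(\beta_l\wedge 1)$, so in general the stated rate is justified with the effective indices, and with the raw $\beta_i$ when $\beta_l\ge 1$ for $1\le l\le q$.
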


\begin{Remark}\label{true_function_class}
We consider the class of functions $\psi_{0}$ for which $\Psi_{0}$ satisfies \Aref{m1} and $\bar{\psi}_{0}^{(M)}$ satisfies \Aref{m3} for a given $M \geq 1$. 
In particular, we focus on the case where $\psi_{0}$ admits the representation
\[
\psi_{0}(\bm{u}, \bm{v}, x) = f(\bm{u}, \bm{v})\,\tau(\bm{v}, x), 
\quad \bm{u}, \bm{v} \in D, \; x \in \mathbb{R},
\]
for some functions $f: D \times D \to \mathbb{R}$ and $\tau: D \times \mathbb{R} \to \mathbb{R}$. 
From Example \ref{example_mixing}, the following class satisfies \Aref{m1} and \Aref{m3}:
\begin{align}
\left\{ f(\bm{u},\bm{v})\tau(\bm{v},x)\bm{1}_{\{\lvert x \rvert \leq M\}} \;\middle|\;
\begin{array}{l}
f \in \mathcal{C}(q, \bm{d}, \bm{t}, \bm{\beta}, 1), \\
\tau \in \mathcal{C}(q^{\prime}, \bm{d}^{\prime}, \bm{t}^{\prime}, \bm{\beta}^{\prime}, M), \\
\text{where $f$ and $\tau$ satisfy } \eqref{f_tau_condition}
\end{array} 
\right\}, \nonumber
\end{align}
where $q, q^{\prime} \in \mathbb{N}$ are arbitrary, $\bm{\beta} \in (0,\infty)^{q+1}$,
$\bm{\beta}^{\prime} \in (0,\infty)^{q^{\prime}+1}$, $\bm{t} \in \mathbb{N}^{q+1}$, $\bm{t}^{\prime} \in \mathbb{N}^{q^{\prime}+1}$,
$\bm{d} \in \mathbb{N}^{q+1}$ with $d_{0} = 2d$ and $d_{q} = 1$, and $\bm{d}^{\prime} \in \mathbb{N}^{q^{\prime}+1}$
with $d_{0}^{\prime} = d + 1$ and $d_{q^{\prime}}^{\prime} = 1$.
\end{Remark}

\section{Numerical experiments}\label{s5}
In this section, we study the case considered in Section \ref{s42} and conduct numerical experiments by estimating the Urysohn kernel using a deep learning approach. The notation in this section follows that of Section \ref{s42}. We begin by introducing additional notation needed to describe the simulation setting. 
For any function $\psi:[0,1]^{2} \times [0,1]^{2} \times \mathbb{R} \to \mathbb{R}$, we define $\tilde{\Psi}_{\psi}:\mathbb{R}^{25 \times 25} \to \mathbb{R}^{25 \times 25}$ by
\begin{align}
    &\tilde{\Psi}_{\psi}(z)\!\left(\frac{i_{1}}{25},\frac{j_{1}}{25}\right):= \frac{1}{25^{2}}\sum_{i_{2},j_{2} = 0}^{24}
    \psi\!\left(\frac{i_{1}}{25},\frac{j_{1}}{25},\frac{i_{2}}{25},\frac{j_{2}}{25},z_{i_{2}j_{2}}\right),\nonumber
\end{align}
for $z = \left\{z_{i_{2}j_{2}}\right\}_{0 \leq i_{2},j_{2} \leq 24} \in \mathbb{R}^{25 \times 25}$ and $0 \leq i_{1},j_{1} \leq 24$.
Consider the following NFAR model:
\begin{align}
    X_{t+1}(u_{1},u_{2}) & = \int_{0}^{1}\int_{0}^{1}5K_{G}(u_{1}-v_{1},u_{2}-v_{2})\tau\left(X_{t}(v_{1},v_{2})\right)dv_{1}dv_{2} + \xi_{t}(u_{1},u_{2}), \label{sim_far}
\end{align}
for $u_{1},u_{2} \in [0,1], t \in \mathbb{N}$, where the function $K_{G}: \mathbb{R}^{2} \to \mathbb{R}$ is defined by $K_{G}(h_{1},h_{2})= \exp\{-5(\lvert h_{1} \rvert^{2} + \lvert h_{2} \rvert^{2})\}, h_{1},h_{2} \in \mathbb{R}$, 
and $\tau : \mathbb{R} \to \mathbb{R}$ is given by $\tau(x) = 1.5 + 2.5 \cos(x) + 2.0 \sin(2x), x \in \mathbb{R}$. 
% The function $\tau$ is introduced in Example \ref{example_mixing} of Section~3, here specified explicitly for this section.
The function $\tau$ is the one introduced in Example~\ref{example_mixing} of Section~3, specified here explicitly in this section.
The sequence $\{\xi_t\}_{t\in\mathbb{N}}$ consists of i.i.d.\ Gaussian processes with mean function $\bm{0}$ and covariance function $K_{G}$(the same kernel $K_{G}$ as above), where
$\mathbb{E}\!\left[\xi_{1}(u_{1},u_{2})\,\xi_{1}(v_{1},v_{2})\right] = 
K_{G}(u_{1}-v_{1},u_{2}-v_{2}), u_{1},u_{2},v_{1},v_{2} \in [0,1]$.
In this setting, the true Urysohn kernel $\psi_{0}$ is
\begin{align}
\psi_{0}(u_{1},u_{2},v_{1},v_{2},x) &:= 5K_{G}(u_{1}-v_{1},u_{2}-v_{2})\tau(x), \quad u_{1},u_{2},v_{1},v_{2} \in [0,1], x \in \mathbb{R}, \nonumber
\end{align}
and the true operator $\Psi_{0}$ is given by
\begin{align}
\Psi_{0}(x)(u_{1},u_{2}) &:= \int_{0}^{1}\int_{0}^{1}\psi_{0}(u_{1},u_{2},v_{1},v_{2},x(v_{1},v_{2}))dv_{1}dv_{2},\quad u_{1},u_{2} \in [0,1], x \in \mathcal{H}.  \nonumber
\end{align}
Note that the NFAR model \eqref{sim_far} satisfies \Aref{a1} and \Aref{a2}--\Aref{m2}.
Therefore, by Theorem \ref{exponential_ergodic} and Corollary \ref{beta_mixing_for_far_chapter3}, 
the time series $\{X_t\}_{t\in\mathbb{N}}$
is exponentially ergodic and exponentially \(\beta\)-mixing.

We next describe how the generalization error is evaluated numerically.
Based on data $\boldsymbol{X}_{T} := \{X_{t}\}_{t = 1}^{T}$ generated according to \eqref{sim_far}, 
we train an estimator of $\Psi_{0}$ using the method proposed in Section \ref{s42} and evaluate its generalization error in \eqref{generalization_gap_def}.
% From Remark~\ref{stationary_remark} and 
% Theorem~\ref{exponential_ergodic}, 
% it can be expressed as
% \begin{align}
%  \mathbb{R}\left[\left\| \widehat{\Psi}_{T}(\tilde{X}) - \bar{\Psi}_{0}^{(M)}(\tilde{X})\right\|_{\mathcal{H}}^{2}\right], \label{sim_generalization_gap_2}
% \end{align}
% where $\tilde{X}$ is an independent copy with the stationary distribution of $\{X_{t}\}_{t = 1}^{T}$. 
As shown in Section~\ref{s4}, the generalization error can be expressed as
\begin{align}
 \mathbb{E}\!\left[\left\| \widehat{\Psi}_{T}(\tilde{X}) - \bar{\Psi}_{0}^{(M)}(\tilde{X})\right\|_{\mathcal{H}}^{2}\right], \nonumber
\end{align}
where $\tilde{X}$ has the same distribution as $X_{1}$ and is independent of $\boldsymbol{X}_{T}$.
In this numerical experiment, however, we use the true operator $\Psi_0$
instead of the truncated operator $\bar{\Psi}_0^{(M)}$ for simplicity.
Accordingly, we consider
\begin{align}
 \mathbb{E}\!\left[\left\| \widehat{\Psi}_{T}(\tilde{X}) - \Psi_{0}(\tilde{X})\right\|_{\mathcal{H}}^{2}\right]. \label{sim_generalization_gap_2}
\end{align}
To approximate the above expectation, we generate $B$ independent sample paths $\{\boldsymbol{X}_{T}^{(b)}\}_{b=1}^{B}$ of length $T$, where
$\boldsymbol{X}_{T}^{(b)}:=\{X_t^{(b)}\}_{t=1}^{T}$ is defined by \eqref{sim_far}.
For each $b$, we then generate a random element $\tilde{X}^{(b)}$ 
independently of $\boldsymbol{X}_{T}^{(b)}$ such that $\tilde{X}^{(b)}$ has the same distribution
as $X_1^{(b)}$. Therefore, the simulation data consist of $B$ independent pairs $\{(\boldsymbol{X}_{T}^{(b)},\tilde{X}^{(b)})\}_{b=1}^{B}$.
By the law of large numbers, for sufficiently large $B$,
\begin{align}
 \mathbb{E}\!\left[\left\| 
 \widehat{\Psi}_{T}(\tilde{X}) - \Psi_{0}(\tilde{X})
  \right\|_{\mathcal{H}}^{2}
  \right]
  &\approx 
  \frac{1}{B}\sum_{b = 1}^{B}
  \left\|
  \widehat{\Psi}_{T}^{(b)}(\tilde{X}^{(b)}) 
  - \Psi_{0}(\tilde{X}^{(b)})
  \right\|_{\mathcal{H}}^{2}, \label{sim_generalization_gap_3}
\end{align}
where the estimator $\widehat{\Psi}_{T}^{(b)}:\mathcal{H} \to \mathcal{H}$ is defined by 
\begin{align}
    \widehat{\Psi}_{T}^{(b)}&:= \Psi_{\widehat{\psi}_{T}},\quad  \widehat{\psi}_{T}^{(b)} := \arg\min_{\psi \in \mathcal{F}_{p_{0}, p_{L+1}}}\frac{1}{T-1}\sum_{t = 1}^{T-1}\left\| X_{t+1}^{(b)} - \Psi_{\psi}(X_{t}^{(b)})\right\|_{\mathcal{H}}^{2},\label{DNN_opt_b}
\end{align}
where $\mathcal{F}_{p_{0},p_{L+1}}$ denotes the DNN class defined in Section \ref{s42} and 
we adopt it with $p_{0} = 5, p_{L + 1} = 1$.
% Accordingly, we compute the right-hand side of \eqref{sim_generalization_gap_3} using the simulated pairs $\{(\boldsymbol{X}_{T}^{(b)},\tilde{X}^{(b)})\}_{b=1}^{B}$.

In what follows, we describe how to generate the time series defined by  \eqref{sim_far}.
Since generating fully continuous functional data is generally difficult, 
we instead simulate time series obtained by discretizing \eqref{sim_far} 
in the spatial domain. Specifically, we generate data from the following 
$100 \times 100$-dimensional nonlinear autoregressive model:
\begin{align}
        Z_{t+1}\left(\frac{i_{1}}{100},\frac{j_{1}}{100}\right)
        &= \frac{1}{100^{2}} \sum_{i_{2},j_{2} = 0}^{99} 
        5 K_{G}\left(\frac{i_{1} - i_{2}}{100},\frac{j_{1} - j_{2}}{100}\right)
        \tau\left(Z_{t}\left(\frac{i_{2}}{100},\frac{j_{2}}{100}\right)\right) 
        + \xi_{t}\left(\frac{i_{1}}{100}, \frac{j_{1}}{100}\right), 
        \label{approx_sim_far}
\end{align}
where $i_{1},j_{1} = 0,\dots,99$.
% For each $b$, we denote by $Z^{(b)}$ and $\tilde{Z}^{(b)}$ 
% the spatial discretizations of $X^{(b)}$ and $\tilde{X}^{(b)}$, respectively. 
% Here, $Z^{(b)} := \{Z_{t}^{(b)}\}_{t = 1}^{T}$ is a high-dimensional time series 
% with sample size $T$, where each element satisfies 
% $Z_{t}^{(b)} \in \mathbb{R}^{100 \times 100}$, and 
% $\tilde{Z}^{(b)} \in \mathbb{R}^{100 \times 100}$ is an independent copy of $\{Z_{t}^{(b)}\}_{t = 1}^{T}$ 
% and has the same stationary distribution. 
For each $b=1,\dots,B$ and $t\in\mathbb{N}$, define
$
Z_t^{(b)} := \left\{ Z_t^{(b)}(i/100,j/100) \right\}_{0\le i,j\le 99}
$ and let $
\boldsymbol{Z}_T^{(b)} := \left\{ Z_t^{(b)} \right\}_{t=1}^{T}
$. Let $\tilde{Z}^{(b)} \in \mathbb{R}^{100\times 100}$ be a random variable that is independent of
$\boldsymbol{Z}_{T}^{(b)}$ and has the same distribution as $Z_1^{(b)}$.
Similarly, we write
$
\tilde{Z}^{(b)} = \left\{ \tilde{Z}^{(b)}(i/100,j/100) \right\}_{0\le i,j\le 99}
$.
% Here, $Z_t^{(b)}$ is a spatial discretization of $X_t^{(b)}$.
% Likewise, $\tilde{Z}^{(b)}$ is a spatial discretization of $\tilde{X}^{(b)}$.
Further details of the data generation procedure are provided in Section~3 in the supplementary material.
Figure \ref{demodata_surface} illustrates the structure of the generated data. Each plot visualizes the surface of $\left\{Z_{t}\left(i/100,j/100\right)\right\}_{i,j = 0}^{99}$ at time points $t = 1,\ 500,$ and $1000$.
\begin{figure}[tb]
\centering
\includegraphics[width=\linewidth]
{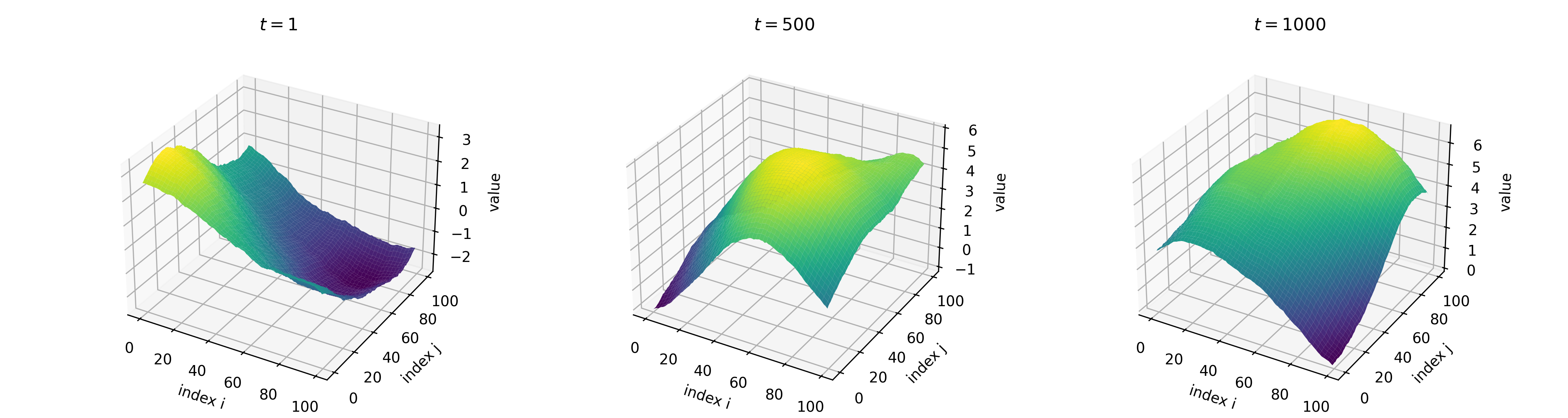}
\caption{Surface plots of the data $Z_{t}$ at times $t = 1, 500, 1000$, indexed by spatial coordinates $i$ and $j$.}
\label{demodata_surface}
\end{figure}

We then downsample the pairs $\left\{(\boldsymbol{Z}_{T}^{(b)}, \tilde{Z}^{(b)})\right\}_{b = 1}^{B}$. 
Specifically, for each $b = 1,\dots,B$ and $t = 1,\dots,T$, 
we extract 
$\left\{Z_{t}^{(b)}\left(i/25,j/25\right)\right\}_{i,j = 0}^{24}$ 
from 
$Z_{t}^{(b)} = \left\{Z_{t}^{(b)}\left(i/100,j/100\right)\right\}_{i,j = 0}^{99}$. 
The same procedure is applied to $\tilde{Z}^{(b)}$. 
For simplicity, we continue to denote the resulting downsampled data by 
$\{(\boldsymbol{Z}_{T}^{(b)}, \tilde{Z}^{(b)})\}_{b = 1}^{B}$. 

To compute $\widehat{\psi}_{T}^{(b)}$ numerically from the observations $\boldsymbol{Z}_{T}^{(b)}$ for each $b$, 
we replace the integrals in \eqref{DNN_opt_b} and \eqref{PSI_psi} with finite sums.
This yields the estimator $\tilde{\psi}_{T}^{(b)}$ defined by
\begin{align}
    \tilde{\psi}_{T}^{(b)} &:= \arg\min_{\psi \in \mathcal{F}_{p_{0},p_{L+1}}}\frac{1}{25^{2}(T-1)}\sum_{t = 1}^{T-1}
    \sum_{i,j = 0}^{24}\bigg\{ Z_{t+1}^{(b)}\left(\frac{i}{25},\frac{j}{25}\right)
    - \tilde{\Psi}_{\psi}(Z_{t}^{(b)})\left(\frac{i}{25},\frac{j}{25}\right)\bigg\}^{2}. \label{DNN_opt}
\end{align}
We then define $\tilde{\Psi}_{T}^{(b)}$ by $\tilde{\Psi}_{T}^{(b)}:= \tilde{\Psi}_{\tilde{\psi}_{T}^{(b)}}$.
Detailed specifications of the DNN training procedure, including the hyperparameter settings and the optimization algorithm, 
are provided in Section \ref{s3} in the supplementary material.
Using $\tilde{\psi}_{T}^{(b)}$ and $\tilde{Z}^{(b)}$, 
we calculate 
\begin{align}
    g_{b,T}&:= \frac{1}{25^{2}}\sum_{i,j = 0}^{24}
    \bigg[\tilde{\Psi}_{\tilde{\psi}_{T}^{(b)}}(\tilde{Z}^{(b)})\left(\frac{i}{25},\frac{j}{25}\right)-\tilde{\Psi}_{\psi_{0}}(\tilde{Z}^{(b)})\left(\frac{i}{25},\frac{j}{25}\right) 
    \bigg]^{2}, \nonumber
\end{align}
which is the discretized counterpart of 
$\left\| \widehat{\Psi}_{T}^{(b)}(\tilde{X}^{(b)}) - \Psi_{0}(\tilde{X}^{(b)})\right\|_{\mathcal{H}}^{2}$, for each $b=1,\dots,B$.

Then, we approximate the generalization error by
$
G_{B,T}:=\sum_{b=1}^{B} g_{b,T}/B.
$
The overall procedure for numerically evaluating the generalization error is summarized as follows:
\begin{enumerate}
    \item For each $b$, compute $\tilde{\psi}_{T}^{(b)}$, defined in \eqref{DNN_opt}, using $\boldsymbol{Z}_{T}^{(b)}$.
    \item For each $b$, compute the least squares distance $g_{b,T}$ using $\tilde{\psi}_{T}^{(b)}$.
    \item Report the generalization error $G_{B,T} = \sum_{b = 1}^{B}g_{b,T}/B$.
\end{enumerate}
We examined $G_{B,T}$ for $B = 161$ 
while varying $T$ across $250, 500, 750, 1000, 1250, 1500, 1750,$ $2000,3000,4000$ and $5000$. 
% Further details of the learning procedure are provided in Section 3 in the supplementary material.
Multidimensional summations arise in this numerical experiment.
Details of their computation are provided in Section~3 of the supplementary material.

The numerical results under the above settings are presented below. Figure \ref{fig:overall}\subref{convergence_rate_plot} depicts how $G_{B,T}$ varies with the sample size.
% \begin{figure}[tb]
% \centering
% \includegraphics[scale = 0.70]{FAR_DNN_Bernoulli/convergence_rate_1202.png}
% \caption{MSE versus sample size}
% \label{convergence_rate_plot}
% \end{figure}
% \begin{figure}[tb]
% \centering
% \includegraphics[width=0.6\linewidth]{FAR_DNN_Bernoulli/convergence_rate_1202.png}
% \caption{MSE versus sample size}
% \label{convergence_rate_plot}
% \end{figure}
\begin{figure}[t]
  \centering
  \begin{subfigure}[t]{0.38\textwidth}
    \centering
    \includegraphics[width=\textwidth]{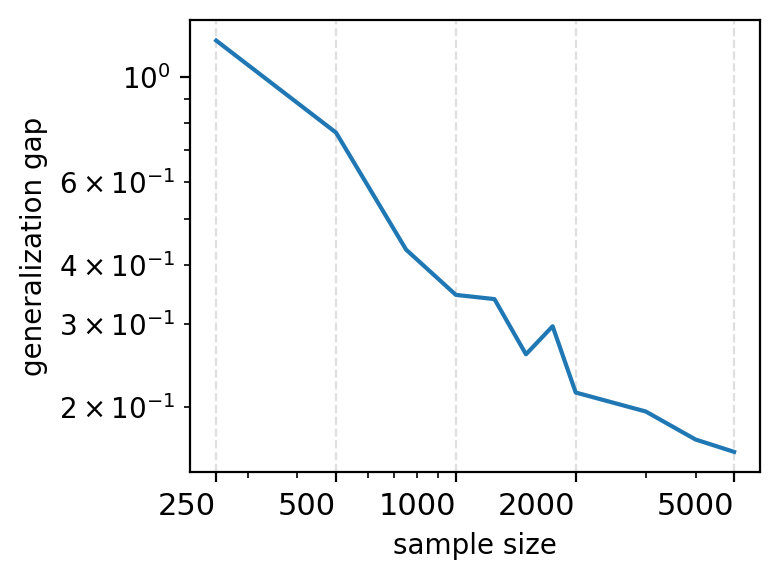}
    \caption{}
    \label{convergence_rate_plot}
  \end{subfigure}
  \hfill
  \begin{subfigure}[t]{0.59\textwidth}
    \centering
    \includegraphics[width=\textwidth]{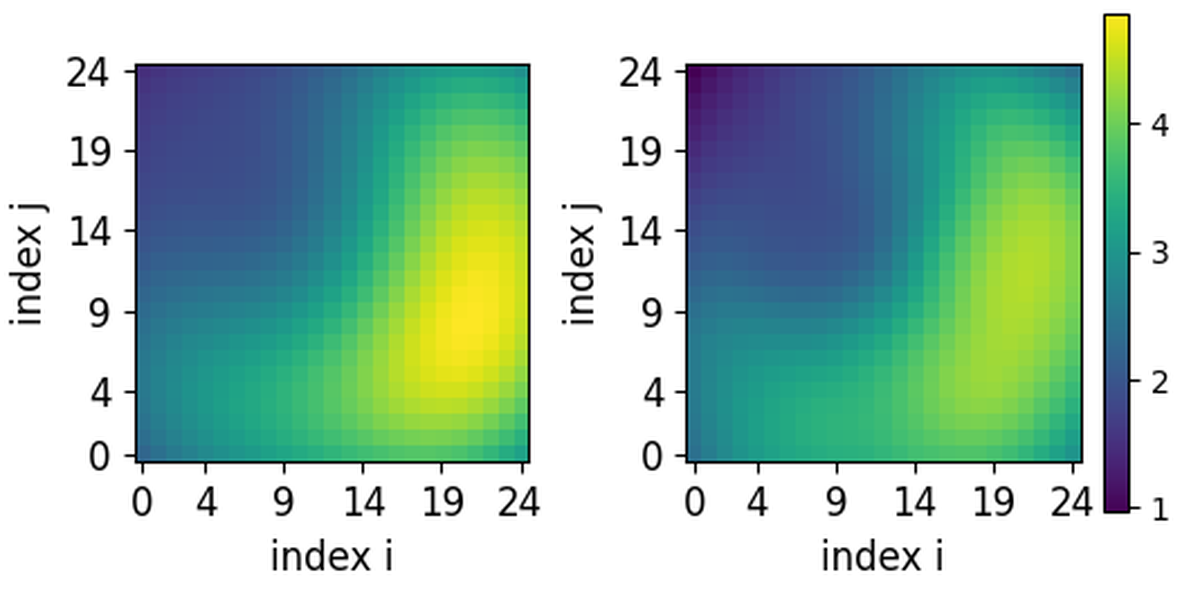}
    \caption{}
    \label{surface_prediction}
  \end{subfigure}
    \caption{Results of the numerical experiments on the generalization error and visualization of predictions. (a) Decrease in $G_{B,T}$ as the sample size increases.
    (b) Output functions obtained by applying the true operator $\tilde{\Psi}_{\psi_{0}}$ (left)
    and the learned operator $\tilde{\Psi}_{\tilde{\psi}_{T}^{(b)}}$ (right) to test data.
    The indices $i$ and $j$ denote spatial coordinates.}
  \label{fig:overall}
\end{figure}
The horizontal axis in Figure \ref{fig:overall}\subref{convergence_rate_plot} represents the sample size $T$, and the vertical axis shows the corresponding generalization error. Both axes are displayed on a logarithmic scale. 
Figure \ref{fig:overall}\subref{convergence_rate_plot} shows that the generalization error approaches $0$ as the sample size increases.

Furthermore, 
% the comparison between the true transformation
% $\left\{\tilde{\Psi}_{\psi_{0}}(\tilde{Z})\left(\frac{i}{25},\frac{j}{25}\right)\right\}_{0 \leq i,j \leq 24}$
% and the predicted transformation
% $\left\{\tilde{\Psi}_{\tilde{\psi}_{T}^{(b)}}(\tilde{Z})\left(\frac{i}{25},\frac{j}{25}\right)\right\}_{0 \leq i,j \leq 24}$
% is shown in Figure \ref{surface_prediction}.
Figure \ref{fig:overall}\subref{surface_prediction} compares the true transformation $\left\{\tilde{\Psi}_{\psi_{0}}(\tilde{Z}^{(b)})\left(\frac{i}{25},\frac{j}{25}\right)\right\}_{0 \leq i,j \leq 24}$
and the predicted transformation
$\left\{\tilde{\Psi}_{\tilde{\psi}_{T}^{(b)}}(\tilde{Z}^{(b)})\left(\frac{i}{25},\frac{j}{25}\right)\right\}_{0 \leq i,j \leq 24}$ for a representative replication $b$. 
The visual comparison indicates that the proposed method in Section \ref{s4} accurately captures the underlying transformation.

\section{Conclusion}\label{s6}
In this paper, we established sufficient conditions for NFAR models to be exponentially $\beta$-mixing 
by applying the Harris-type framework of \citet{hairer2011harris}.
As an application of the exponential $\beta$-mixing properties of NFAR models, 
we derived the oracle inequality in Theorem \ref{thm1} 
for a nonparametric estimator of the true operator $\Psi_{0}$ in the NFAR model \eqref{N_far}.
These results can be regarded as an infinite dimensional extension of \citet{kurisu2024adaptive}.
Furthermore, when the true operator is a Urysohn operator, 
we estimated its Urysohn kernel using DNNs and obtained an upper bound on the convergence rate.

A limitation of our results is that Corollary \ref{beta_mixing_for_far_chapter3} relies essentially on the assumption that the noise sequence $\{\xi_{t}\}_{t \in \mathbb{N}}$ follows a Gaussian process (see \Aref{a1}). 
Extending this assumption to broader classes of noise distributions, 
such as sub-Gaussian distributions, appears to be difficult and possible only in limited settings; 
see Corollary 3 in \citet{rao1966inference2} for related discussion. 
In this sense, the assumption is stronger than the sufficient conditions required for $L^{p}$-$m$ approximation.

We conclude by noting several directions for future work.
We should derive bounds on the error induced by discrete observations of functional time series. 
In practice, data are available only on a grid, and estimators must be constructed from discrete observations. 
It would also be of interest to characterize how the convergence rate depends on the mesh size (sampling frequency) 
from the perspective of statistical inference for stochastic processes. 
It is also important to identify alternative sufficient conditions for NFAR models to be exponentially mixing.
When the nonlinear operator admits an explicit form of how it transforms the input function, 
as in the case of a bilinear operator, 
it may be possible to establish exponential $\beta$-mixing properties 
under weaker assumptions by applying an appropriate Lyapunov drift condition. 
Furthermore, establishing the minimax optimality of the estimator proposed in Section~\ref{s4} remains an important open problem, since both the input and output spaces are infinite dimensional.
\begin{appendix}
\section{Proofs of the results in Section \ref{s31}}\label{s71}
In this section, we provide proofs of lemmas and theorems in Section \ref{s3}.
We begin by showing that $\{X_{t}\}_{t \in \mathbb{N}}$
defined by \eqref{N_far} is a time-homogeneous Markov chain. Indeed, since $\xi_{t}$ is independent of $\mathcal{F}_{t}$ in \eqref{N_far} (see Section \ref{s2}), we have that for any $t \in \mathbb{N}$,
\begin{align}
    \mathbb{P}\left(X_{t+1} \in A \mid X_{t} = x\right) &= \mathbb{P}\left(\Psi_{0}(x) + \xi_{t} \in A\right) = \mathcal{N}\left(\Psi_{0}(x),Q\right)(A),\quad A \in \mathcal{B}\left(\mathcal{H}\right), \nonumber
\end{align}
which shows that the chain $\{X_{t}\}_{t \in \mathbb{N}}$ is time-homogeneous. 
A useful tool for establishing exponential $\beta$-mixing is Lemma 2.7 in the supplementary material.
Based on Lemma 2.7, it suffices to establish the following:
\begin{enumerate}[label=(C\arabic*)]
\item \label{c1} The Markov chain $\{X_{t}\}_{t \in \mathbb{N}}$ admits an invariant distribution $\pi$.
\item \label{c2} There exists a measurable function $C:\mathcal{H} \to \mathbb{R}$ and a constant $\gamma \in (0,1)$ such that, for all $t \in \mathbb{N}$ and $\phi \in \mathcal{C}_{b}(\mathcal{H})$,
    \begin{align}
        &\left\lvert (\mathcal{P}^{t}\phi)(x) - \int_{\mathcal{H}}\phi(z)\pi(dz) \right\rvert \leq \gamma^{t} C(x), \quad x \in \mathcal{H}. \nonumber
    \end{align}
\item \label{c3} With the function $C$ as in \ref{c2} and $\pi$ as in \ref{c1},
    \begin{align}
        &\int_{\mathcal{H}} C(x)\,\pi(dx) < \infty. \nonumber
    \end{align}
\end{enumerate}
Conditions \ref{c1} and \ref{c2} are ensured by Theorem \ref{exponential_ergodic}.
We now prove Theorem \ref{exponential_ergodic}.
It follows by applying Lemma 2.4 in the supplementary material to the time series $\{X_t\}_{t \in \mathbb{N}}$ defined by \eqref{N_far}. 
Lemma 2.4 is a restatement of Theorem 1.2 of \citet{hairer2011harris}, which gives a version of Harris' ergodic theorem.
\begin{proof}[Proof of Theorem \ref{exponential_ergodic}]
    Set $V(x) := \left\| x \right\|_{\mathcal{H}}$, $x \in \mathcal{H}$.
    By Lemma 2.4 in the supplementary material, it suffices to verify the following conditions:
    \begin{itemize}
    \item[(A)] There exist constants $\rho \in (0,1)$ and $\kappa\ge 0$ such that
    \[
        (\mathcal{P}V)(x)\le \rho\,V(x)+\kappa,\quad x\in\mathcal{H}. 
    \]
    \item[(B)] For some $R>2\kappa/(1-\rho)$ there exists a probability measure $\nu_{R}$ on $\big(\mathcal{\mathcal{H}},\mathcal{B}(\mathcal{H})\big)$ and a constant $\varepsilon_{R}\in(0,1]$ such that, for all $x\in V_{R}:=\{x\in\mathcal{H}:V(x)\le R\}$,
     \[
        \mathcal{P}(x,\cdot)\ \ge\ \varepsilon_{R}\,\nu_{R}(\cdot). 
    \]
    \end{itemize}
    
    We first verify the condition (A). 
    From \Aref{m1} and \Aref{m2}, we have that there exist 
    positive numbers $c_{1},c_{2} > 0$ such that for any $x \in \mathcal{H}$, 
    \begin{align}
        \left(\mathcal{P}V\right)(x) 
        = \mathbb{E}\left[ \left\| \Psi_{0}\left(x\right) + \xi_{1} \right\|_{\mathcal{H}}\right] 
        \leq  c_{1} \left\| Q \right\|_{\mathcal{L}} \left\| x \right\|_{\mathcal{H}} + c_{2} \left\| Q \right\|_{\mathcal{L}} + \sqrt{\left\| Q\right\|_{\mathbb{T}}}. \label{a11}
    \end{align}
    Set $\rho := c_{1}\left\| Q  \right\|_{\mathcal{L}}$ 
    and 
    $\kappa := c_{2}\left\| Q \right\|_{\mathcal{L}} + \sqrt{\left\| Q \right\|_{\mathbb{T}}}.$
    Then, we find that $\rho \in (0,1)$ from \Aref{m2}, which yields (A) combined with \eqref{a11}.
    
We now prove (B). 
% Let $P$ denote the law of a centered Gaussian random element $\xi$ on $\mathcal{H}$.  
Let $\mu_{Q}$ be a probability measure on $(\mathcal{H},\mathcal{B}(\mathcal{H}))$ defined by $\mu_{Q}(\cdot) := \mathcal{N}(0,Q)(\cdot)$.
% Let $P$ denote the law of $\xi_{1}$ on $\mathcal{H}$. 
% For $h \in \mathcal{H}$, let $P_h$ denote the law of $\xi_{1} + h$. 
 Let $\mathcal{H}_{k}$ be the reproducing kernel Hilbert space 
associated with the covariance function $k$. 
Let $I^*$ be the canonical embedding from $\mathcal{H}$ into
\[
L_2(\mu_{Q}) := \left\{ f:\mathcal{H} \to \mathbb{R} : \int_{\mathcal{H}} \lvert f(x) \rvert^{2} \, d\mu_{Q}(x) < \infty \right\},
\]
and let $I:L_2(\mu_{Q})\to \mathcal{H}$ denote the adjoint operator of $I^*$.
We denote the closure of $I^{*}(\mathcal{H})$ in $L_{2}(\mu_{Q})$ by $\mathcal{H}_{k}^{*}$. 
To verify the condition (B), we apply the following lemmas.
The proof of Lemma \ref{CM_ver2_appendix} is given in Section~1 of the supplementary material.
    
\begin{Lemma}[Cameron-Martin Theorem; see Theorem 5.1 in \citet{lifshits2012gaussian}]\label{CM_appendix}
Let $h \in \mathcal{H}$, and let $\mu_{h,\mathcal{Q}}(\cdot) := \mathcal{N}(h,Q)(\cdot)$.
Then, $\mu_{h,Q} \ll \mu_{Q}$ if and only if $h \in \mathcal{H}_{k}$. If $h \in \mathcal{H}_{k}$, then the density $d\mathcal{N}(h,Q)/d\mathcal{N}(0,Q)$ has a form 
 \begin{align}
    \frac{d\mathcal{N}(h,Q)}{d\mathcal{N}(0,Q)}(x)& = \exp\left\{z(x) - \frac{\| h \|_{\mathcal{H}_{k}}^{2}}{2}\right\}, \nonumber
 \end{align}
 where $z \in \mathcal{H}_{k}^{*}$ is a measurable linear functional such that $Iz = h$. 
\end{Lemma}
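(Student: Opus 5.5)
The plan is to diagonalize $Q$ and reduce the lemma to a statement about product measures on $\mathbb{R}^{\mathbb{N}}$. Let $\{\lambda_i\}$ and $\{e_i\}$ be the eigenvalues and eigenfunctions of $Q$, as in Example \ref{example_mixing}, with $\lambda_i>0$ by \Aref{a2}. Under $\mu_Q$ the coordinates $x_i:=\langle x,e_i\rangle_{\mathcal{H}}$ are independent $\mathcal{N}(0,\lambda_i)$. Under $\mu_{h,Q}$ they are independent $\mathcal{N}(h_i,\lambda_i)$, where $h_i:=\langle h,e_i\rangle_{\mathcal{H}}$. The map $x\mapsto (x_i/\sqrt{\lambda_i})_i$ is a measurable bijection onto its image, and it carries the two Gaussian measures to $\bigotimes_i\mathcal{N}(0,1)$ and $\bigotimes_i\mathcal{N}(a_i,1)$, where $a_i:=h_i/\sqrt{\lambda_i}$. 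In parallel, I will use Mercer's theorem, $k(\bm{u},\bm{v})=\sum_i\lambda_i e_i(\bm{u})e_i(\bm{v})$, to identify the reproducing kernel Hilbert space as
\[
\mathcal{H}_k=Q^{1/2}(\mathcal{H})=\Bigl\{h\in\mathcal{H}:\textstyle\sum_i h_i^2/\lambda_i<\infty\Bigr\},\qquad \|h\|_{\mathcal{H}_k}^2=\textstyle\sum_i h_i^2/\lambda_i=\sum_i a_i^2 .
\]

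For the equivalence/singularity dichotomy I will apply Kakutani's theorem for infinite products. The Hellinger affinity of $\mathcal{N}(a_i,1)$ and $\mathcal{N}(0,1)$ equals $\exp(-a_i^2/8)$. Hence the infinite product of affinities is positive if and only if $\sum_i a_i^2<\infty$. In that case the two product measures are equivalent; otherwise they are mutually singular. This gives $\mu_{h,Q}\ll\mu_Q$ if and only if $h\in\mathcal{H}_k$.

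For the density formula, the finite-dimensional densities are
\[
\rho_n(x)=\exp\Bigl\{z_n(x)-\tfrac12\textstyle\sum_{i\le n}h_i^2/\lambda_i\Bigr\},\qquad z_n(x):=\textstyle\sum_{i\le n}h_ix_i/\lambda_i .
\]
They form a positive martingale under $\mu_Q$ with respect to the filtration generated by $x_1,\dots,x_n$. Each $z_n$ equals $I^*(g_n)$ with $g_n:=\sum_{i\le n}(h_i/\lambda_i)e_i\in\mathcal{H}$, and $\|z_n-z_m\|_{L_2(\mu_Q)}^2=\sum_{m<i\le n}h_i^2/\lambda_i$. So when $h\in\mathcal{H}_k$, $z_n$ converges in $L_2(\mu_Q)$, and almost surely along a subsequence, to some $z\in\mathcal{H}_k^*$. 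Therefore $\rho_n\to\exp\{z-\|h\|_{\mathcal{H}_k}^2/2\}$ almost surely. By Kakutani (or by uniform integrability of $\rho_n$, since $\sup_n\mathbb{E}\rho_n^2<\infty$), this limit is $d\mu_{h,Q}/d\mu_Q$. Finally, I compute $Iz=\int x\,z(x)\,\mu_Q(dx)=\lim_n\sum_{i\le n}(h_i/\lambda_i)\,\mathbb{E}[x\,x_i]=\sum_i h_ie_i=h$. I will also note that $z$ is a measurable linear functional, as an almost sure limit of linear functionals defined on a full-measure linear subspace.

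The main obstacle is the Kakutani dichotomy step, together with a clean identification of $\mathcal{H}_k$ with $Q^{1/2}(\mathcal{H})$, including its norm, via Mercer's theorem. The remaining steps are routine Gaussian and martingale computations.
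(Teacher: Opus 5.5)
The paper does not prove this lemma; it cites Theorem~5.1 of \citet{lifshits2012gaussian}. Your argument is correct, and it takes a genuinely different route from the standard coordinate-free proof of that theorem.

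In the coordinate-free proof, one checks that $e^{z-\|h\|_{\mathcal{H}_k}^2/2}\,d\mu_Q$ has the characteristic functional of $\mathcal{N}(h,Q)$. This uses only that $z$ and $\langle g,\cdot\rangle_{\mathcal{H}}$ are jointly Gaussian under $\mu_Q$ with covariance $\langle g,Iz\rangle_{\mathcal{H}}=\langle g,h\rangle_{\mathcal{H}}$. Singularity for $h\notin\mathcal{H}_k$ comes from functionals $g_n$ with $\langle Qg_n,g_n\rangle_{\mathcal{H}}=1$ and $\langle g_n,h\rangle_{\mathcal{H}}\to\infty$, whose laws under the two measures separate. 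That proof works for any centered Gaussian measure on a locally convex space and needs neither a spectral decomposition nor injectivity of $Q$.

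Your reduction to $\bigotimes_i\mathcal{N}(0,1)$ versus $\bigotimes_i\mathcal{N}(a_i,1)$ buys several things in the Hilbert setting:
\begin{itemize}
\item it is more elementary;
\item Kakutani makes the equivalence/singularity dichotomy transparent;
\item the density appears as the a.s.\ limit of finite-dimensional likelihood ratios;
\item it matches the eigen-expansion $I^{-1}h=\sum_i(h_i/\lambda_i)e_i$ the paper uses for Lemma~\ref{CM_ver2_appendix}.
\end{itemize}
Kakutani is in fact needed only for the singular direction. When $h\in\mathcal{H}_k$, $\mathbb{E}\rho_n^2=e^{\sum_{i\le n}a_i^2}$ is bounded, so the $L^2$-bounded martingale already gives $\mu_{h,Q}\ll\mu_Q$ with density $\lim_n\rho_n$.

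Two things to tidy.

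First, Mercer's theorem requires continuity of $k$. The paper does not assume this; it only assumes $\int_D k(\bm{u},\bm{u})\,d\bm{u}<\infty$. Mercer is also unnecessary. Since the lemma ties $h$ to $z\in\mathcal{H}_k^*$ through $Iz=h$, $\mathcal{H}_k$ is the Cameron--Martin space $I(\mathcal{H}_k^*)$, with norm $\|Iz\|_{\mathcal{H}_k}=\|z\|_{L_2(\mu_Q)}$. From $II^*=Q$ and $\|I^*g\|_{L_2(\mu_Q)}^2=\langle Qg,g\rangle_{\mathcal{H}}$ one gets $\mathcal{H}_k^*=\{\sum_ic_ix_i:\sum_i\lambda_ic_i^2<\infty\}$ and $I(\sum_ic_ix_i)=\sum_i\lambda_ic_ie_i$. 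This is exactly your description of $\mathcal{H}_k$ and its norm, so the ``main obstacle'' you flag disappears. Your $z_n=I^*(g_n)$ computation already does half of this.

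Second, the lemma is stated without \Aref{a2}. If $h$ has a nonzero component along some $e\in\mathrm{Ker}(Q)$, then $\langle x,e\rangle_{\mathcal{H}}$ is a.s.\ $0$ under $\mu_Q$ but a.s.\ $\langle h,e\rangle_{\mathcal{H}}\neq 0$ under $\mu_{h,Q}$, so the measures are trivially singular. Otherwise, run your argument on $\overline{\mathrm{Im}(Q)}$. Under \Aref{a2}, which is in force where the lemma is used in Theorem~\ref{exponential_ergodic}, your version suffices as written.
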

\begin{Lemma}\label{CM_ver2_appendix}
% Suppose that $\mathcal{X}$ is a Hilbert space $\mathcal{H}$.  
Let $h\in\mathcal{H}$ be of the form $h=Qg$ for some $g\in\mathcal{H}$. Then,
\begin{align}
    (I^{-1}h)(x) &= \langle g, x\rangle_{\mathcal{H}},\quad x \in \mathcal{H}, \label{skorohod_int}\\
    \|h\|_{\mathcal{H}_{k}}^{2} &= \langle Qg, g\rangle_{\mathcal{H}}. \label{CM_norm}
\end{align}
\end{Lemma}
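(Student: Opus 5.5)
The plan is to work directly with the definitions of $I^{*}$, $I$ and $\mathcal{H}_{k}^{*}$, and to use the standard description of the Cameron--Martin space as $\mathcal{H}_{k}=I(\mathcal{H}_{k}^{*})$ with norm $\|Iz\|_{\mathcal{H}_{k}}=\|z\|_{L_{2}(\mu_{Q})}$ for $z\in\mathcal{H}_{k}^{*}$. This is the framework in which Lemma~\ref{CM_appendix} is stated (Lifshits), and I will take the identification of the RKHS of $k$ with $I(\mathcal{H}_{k}^{*})$ from there.

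\textbf{Step 1: formulas for $I^{*}$ and $I$.} First, $I^{*}g$ is the linear functional $x\mapsto\langle g,x\rangle_{\mathcal{H}}$. It lies in $L_{2}(\mu_{Q})$ because $\int\langle g,x\rangle^{2}\,d\mu_{Q}(x)=\langle Qg,g\rangle_{\mathcal{H}}<\infty$, by the definition of the covariance operator.

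Next, I identify the adjoint $I$. For $f\in L_{2}(\mu_{Q})$, define the Bochner integral $\int_{\mathcal{H}}x\,f(x)\,\mu_{Q}(dx)$. It is well defined because $\int\|x\|_{\mathcal{H}}^{2}\,d\mu_{Q}=\|Q\|_{\mathbb{T}}<\infty$, so Cauchy--Schwarz gives $\int\|x\|\,|f(x)|\,d\mu_{Q}<\infty$. For every $g\in\mathcal{H}$,
\[
\Big\langle \int x f(x)\,\mu_{Q}(dx),\,g\Big\rangle_{\mathcal{H}}=\int f(x)\langle g,x\rangle_{\mathcal{H}}\,\mu_{Q}(dx)=\langle f,I^{*}g\rangle_{L_{2}(\mu_{Q})}.
\]
Hence $If=\int x f(x)\,\mu_{Q}(dx)$.

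\textbf{Step 2: $I(I^{*}g)=Qg$.} Taking $f=I^{*}g$ in Step 1 gives
\[
\langle I I^{*}g,\,y\rangle_{\mathcal{H}}=\int\langle g,x\rangle\langle y,x\rangle\,\mu_{Q}(dx)=\langle Qg,y\rangle_{\mathcal{H}}\quad\text{for all } y\in\mathcal{H}.
\]
So $I(I^{*}g)=Qg=h$. Since $I^{*}g\in I^{*}(\mathcal{H})\subset\mathcal{H}_{k}^{*}$, this exhibits $h$ as the image under $I$ of an element of $\mathcal{H}_{k}^{*}$.

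\textbf{Step 3: injectivity of $I$ on $\mathcal{H}_{k}^{*}$.} To make $I^{-1}h$ well defined, I check that $I$ is injective on $\mathcal{H}_{k}^{*}$. Suppose $z\in\mathcal{H}_{k}^{*}$ and $Iz=0$. Then $\langle z,I^{*}g\rangle_{L_{2}}=\langle Iz,g\rangle_{\mathcal{H}}=0$ for all $g$. So $z$ is orthogonal to $I^{*}(\mathcal{H})$, which is dense in $\mathcal{H}_{k}^{*}$ by definition. Therefore $z=0$, and $I^{-1}h=I^{*}g$, which is \eqref{skorohod_int}.

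\textbf{Step 4: the norm identity.} By the isometric description of $\mathcal{H}_{k}$,
\[
\|h\|_{\mathcal{H}_{k}}^{2}=\|I^{*}g\|_{L_{2}(\mu_{Q})}^{2}=\int\langle g,x\rangle^{2}\,d\mu_{Q}(x)=\langle Qg,g\rangle_{\mathcal{H}},
\]
which is \eqref{CM_norm}.

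\textbf{Main obstacle.} The only delicate point is to justify that the RKHS $\mathcal{H}_{k}$ of the covariance function $k$, regarded as a subspace of $\mathcal{H}=L^{2}(D)$, coincides with $I(\mathcal{H}_{k}^{*})$ with the stated isometric norm. I would invoke Lifshits' construction for this, noting that the kernel functions $k(\cdot,\bm v)$ correspond to point evaluations. If that identification is awkward, a fallback is to verify it via the eigen-expansion $Q=\sum_{i}\lambda_{i}e_{i}\otimes e_{i}$. In that basis, $h=Qg$ has $\mathcal{H}_{k}$-norm squared $\sum_{i}\langle h,e_{i}\rangle^{2}/\lambda_{i}=\sum_{i}\lambda_{i}\langle g,e_{i}\rangle^{2}=\langle Qg,g\rangle_{\mathcal{H}}$. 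Either route gives \eqref{CM_norm}.
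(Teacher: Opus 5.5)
Your argument is correct, but it takes a different route from the paper. The paper works in the eigenbasis of $Q$. It writes $Q=\sum_i\lambda_i e_i\otimes e_i$ and quotes from Lifshits (Example 4.2) the explicit formulas $I^{-1}h=\sum_i \lambda_i^{-1}\langle e_i,h\rangle_{\mathcal{H}}e_i$ and $\|h\|_{\mathcal{H}_k}^2=\sum_i\lambda_i^{-1}\langle h,e_i\rangle_{\mathcal{H}}^2$. It then substitutes $\langle e_i,Qg\rangle_{\mathcal{H}}=\lambda_i\langle e_i,g\rangle_{\mathcal{H}}$ into both series; this is essentially your fallback.

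You instead stay basis-free. You identify $I$ as the Bochner integral $f\mapsto\int x f(x)\,\mu_Q(dx)$, which is exactly Lifshits' definition of $I$, so this also confirms that the paper's ``adjoint of $I^*$'' agrees with the object in the Cameron--Martin theorem. You then show $II^*g=Qg$ and prove that $I$ is injective on $\mathcal{H}_k^*$, so that $I^{-1}h=I^*g$ is unambiguous. Finally, you read off the norm from the isometric definition $\|Iz\|_{\mathcal{H}_k}=\|z\|_{L_2(\mu_Q)}$.

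\textbf{What your route buys.} It uses only the defining properties of $I$, $I^*$ and $\mathcal{H}_k^*$. It makes explicit the well-definedness of $I^{-1}$, which the paper takes for granted. It never divides by the eigenvalues $\lambda_i$.

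\textbf{What the paper's route buys.} It is a two-line computation once the Hilbert-space example is quoted. It also ties the Cameron--Martin norm to the eigen-expansion that the paper reuses to describe $\mathcal{H}_{\tilde k}$ in the example of Section~\ref{s31}.

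The identification of the RKHS of $k$ with $I(\mathcal{H}_k^*)$, which you flag as the delicate point, is not proved in the paper either. It is implicit in applying Lifshits' Cameron--Martin theorem to $\mathcal{H}_k$ in the first place, so you are on the same footing as the paper there.
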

%We now return to the proof of (B). 
% Set a probability measure $\mu_{Q}$ on $(\mathcal{H},\mathcal{B}(\mathcal{H}))$ by $\mu_{Q}(\cdot) := \mathcal{N}(0,Q)(\cdot)$.
Fix $R > 0$ and $x \in V_{R}$. 
From Lemma~\ref{CM_appendix}, \Aref{a1}, \Aref{a2}, and \Aref{m1}, we find that
for any $A \in \borelH{\mathcal{H}}$,
    \begin{align}
        \mathcal{P}\left(x,A\right)&= \int_{A}\frac{d\mathcal{N}\left(\Psi_{0}\left(x\right),Q\right)}{d \mathcal{N}\left(0,Q\right)}(h)\,\mu_{Q}\left(dh\right), \nonumber
    \end{align}
    where
    \begin{align}
\frac{d\mathcal{N}\left(\Psi_{0}\left(x\right),Q\right)}{d \mathcal{N}\left(0,Q\right)}(h)&:=  \exp\left\{ 
        \left(I^{-1}\Psi_{0}\left(x\right)\right)\left(h\right) - \frac{1}{2}\left\| \Psi_{0}(x) \right\|_{\mathcal{H}_{k}}^{2} \right\},\quad h \in \mathcal{H}. \nonumber    
    \end{align}
    Moreover, from Lemma \ref{CM_ver2_appendix} and \Aref{m1}, 
    we obtain
    \begin{align}
\frac{d\mathcal{N}\left(\Psi_{0}\left(x\right),Q\right)}{d \mathcal{N}\left(0,Q\right)}(h)&
        = \exp\biggl\{
            \langle m_{0}\left(x\right), h \rangle_{\mathcal{H}} - \frac{1}{2}\left\langle Q m_{0}\left(x\right), m_{0}\left(x\right) \right\rangle_{\mathcal{H}}\biggr\}. \nonumber        
    \end{align}
    For any $x \in \mathcal{H}$, set $f_{x}:\mathcal{H} \to \mathbb{R}$
    by $f_{x}(h) := \langle m_{0}\left(x\right), h \rangle_{\mathcal{H}} - \frac{1}{2}\left\langle Q m_{0}\left(x\right), m_{0}\left(x\right) \right\rangle_{\mathcal{H}}, h \in \mathcal{H}$.
    Note that, for any $x,h \in \mathcal{H}$,
    \[
      f_{x}(h) \geq - \left\|m_{0}\left(x\right)\right\|_{\mathcal{H}}\left\| h \right\|_{\mathcal{H}} - 
    \frac{1}{2}\left\| Q \right\|_{\mathcal{L}}\left\| m_{0}(x) \right\|_{\mathcal{H}}^{2}.
    \]
    Hence, from \Aref{m2}, we have that there exist positive constants $c_{1},c_{2} > 0$ for $x,h \in V_{R}$,
    \begin{align*} 
        f_{x}\left(h\right) \geq& - \left(c_{1}\left\| x \right\|_{\mathcal{H}} + c_{2}\right)\left\| h \right\|_{\mathcal{H}} - \frac{1}{2}\left\| Q \right\|_{\mathcal{L}}\left(c_{1} \left\| x \right\|_{\mathcal{H}} + c_{2}\right)^{2} \\
        \geq& - \left(c_{1} R + c_{2}\right) R - \frac{1}{2}\left\| Q \right\|_{\mathcal{L}}\left(c_{1} R + c_{2}\right)^{2}=: - \epsilon,
    \end{align*}
    and therefore
    \begin{align}
         \mathcal{P}\left(x,A\right)
         &= \int_{A}\exp\left\{f_{x}(h)\right\}\mu_{Q}\left(dh\right)
         \geq \int_{A \cap V_{R}}\exp\left\{f_{x}(h)\right\}\mu_{Q}\left(dh\right)\nonumber\\
         &\geq \int_{A \cap V_{R}} \exp(-\epsilon)\,\mu_{Q}\left(dh\right) 
         = \exp(-\epsilon)\,\mu_{Q}\left(V_{R}\right)\frac{\mu_{Q}\left(A \cap V_{R} \right)}{\mu_{Q}\left(V_{R}\right)}
           = \delta \nu_{R}(A),\label{aa2}
    \end{align}
    where $\delta = \exp(-\epsilon)\mu_{Q}\left(V_{R}\right)$ and $\nu_{R}$ is a probability measure on $(\mathcal{H},\mathcal{B}(\mathcal{H}))$ defined by $\nu_{R}(A) := \mu_{Q}\left(A \cap V_{R} \right)/\mu_{Q}\left(V_{R}\right)\;\;(A \in \mathcal{B}(\mathcal{H}))$.
Here we use that $\mu_{Q}(V_{R})>0$ in the definition of $\nu_{R}$, which follows from \Aref{a2}.
By definition, $\delta\in(0,1)$, and hence (B) follows from \eqref{aa2}.
\end{proof}

The following lemma establishes \ref{c3}.
\begin{Lemma}\label{moment_control_appendix}
Suppose that \Aref{a1}, \Aref{a2}--\Aref{m2} hold for \eqref{N_far}.
    Then, any invariant measure $\pi$ of the time series $\{X_{t}\}_{t \in \mathbb{N}}$ defined by \eqref{N_far} satisfies that 
    \begin{align}
        &\int_{\mathcal{H}}\left\| x \right\|_{\mathcal{H}}\pi(dx) < \infty. \nonumber
    \end{align}
\end{Lemma}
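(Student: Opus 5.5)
Truncation plus invariance is the route I would take. Set $V(x)=\|x\|_{\mathcal{H}}$. The drift condition (A), established in the proof of Theorem \ref{exponential_ergodic}, gives
\[
(\mathcal{P}V)(x)\le \rho V(x)+\kappa,\qquad \rho=c_1\|Q\|_{\mathcal{L}}\in(0,1),\quad \kappa=c_2\|Q\|_{\mathcal{L}}+\sqrt{\|Q\|_{\mathbb{T}}}.
\]
Invariance of $\pi$ gives $\int \mathcal{P}\phi\, d\pi=\int\phi\, d\pi$ for bounded measurable $\phi$. If we could plug in $\phi=V$ directly, we would get $\int V\,d\pi\le\rho\int V\,d\pi+\kappa$, hence $\int V\,d\pi\le\kappa/(1-\rho)$. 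This step is only legitimate once we know $\int V\,d\pi<\infty$, which is exactly the claim, so the work is to justify it through truncation.

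\textbf{Truncation step.} For $n\in\mathbb{N}$, let $V_n:=V\wedge n$, which is bounded and measurable. Iterating (A) gives
\[
(\mathcal{P}^tV)(x)\le \rho^tV(x)+\frac{\kappa}{1-\rho}.
\]
Since $V_n\le V$ and $\mathcal{P}^t$ is positive, $\mathcal{P}^tV_n\le \mathcal{P}^tV$. Since $V_n\le n$, we also have $\mathcal{P}^tV_n\le n$. Hence
\[
\mathcal{P}^tV_n(x)\le \min\Bigl\{n,\ \rho^tV(x)+\tfrac{\kappa}{1-\rho}\Bigr\}.
\]
Integrating against $\pi$ and using invariance of $\pi$ under $\mathcal{P}^t$ yields
\[
\int V_n\,d\pi=\int \mathcal{P}^tV_n\,d\pi\le \int \min\Bigl\{n,\ \rho^tV+\tfrac{\kappa}{1-\rho}\Bigr\}\,d\pi .
\]
For fixed $n$, the integrand is bounded by $n$ and converges pointwise to $\min\{n,\kappa/(1-\rho)\}$ as $t\to\infty$, because $V(x)<\infty$ for every $x$. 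Dominated convergence then gives $\int V_n\,d\pi\le \kappa/(1-\rho)$ for every $n$. Monotone convergence in $n$ finally gives $\int\|x\|_{\mathcal{H}}\,\pi(dx)\le\kappa/(1-\rho)<\infty$.

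The only delicate point is that the invariance identity must be applied to bounded functions, not to $V$ itself; the truncation together with the pointwise limit in $t$ handles this. I also need $\mathcal{P}^t$ applied to the nonnegative unbounded function $V$ to be well defined with the iterated bound. This follows by induction, using that $\mathcal{P}$ is a positive operator on nonnegative measurable functions and applying (A) at each step.

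Note that the argument uses only \Aref{m1}--\Aref{m2} through (A) and the Gaussian trace bound $\mathbb{E}\|\xi_1\|_{\mathcal{H}}\le\sqrt{\|Q\|_{\mathbb{T}}}$. It holds for \emph{any} invariant $\pi$, with no need for uniqueness.
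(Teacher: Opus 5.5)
Your proof is correct, but it takes a genuinely different and more elementary route than the paper's.

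The paper also starts from the drift bound (A) with $V(x)=\|x\|_{\mathcal{H}}$. It then passes to $\tilde V=1+V$ and uses Lemma 2.5 of the supplement (Lemma 15.2.8 of Meyn and Tweedie) to convert (A) into the geometric drift condition (V4) with a petite set. Finally it invokes Lemma 2.6 (part of their Theorem 14.1), the $f$-regularity equivalence, to conclude $\int\tilde V\,d\pi<\infty$.

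That route implicitly needs three structural facts:
\begin{itemize}
\item $\psi$-irreducibility of the chain;
\item aperiodicity;
\item petiteness of the sublevel sets $\{\tilde V\le R\}$.
\end{itemize}
These would follow from the minorization (B), which the proof of Theorem \ref{exponential_ergodic} establishes for every $R>0$ using \Aref{a1}, \Aref{a2} and \Aref{m1}. The paper does not spell this out.

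Your truncation-plus-invariance argument uses only (A) and the invariance of $\pi$ on bounded functions. It therefore needs none of that structure. It does not use \Aref{a2}, Gaussianity beyond $\mathbb{E}\|\xi_1\|_{\mathcal{H}}\le\sqrt{\|Q\|_{\mathbb{T}}}$, or uniqueness of $\pi$. It also yields the explicit bound $\int\|x\|_{\mathcal{H}}\,\pi(dx)\le\kappa/(1-\rho)$.

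The paper's approach only buys a fit with the general Meyn--Tweedie framework, which this lemma does not need. Your dominated-convergence step implicitly uses that $\pi$ is a probability measure, so that the constant $n$ is an integrable dominating function; that is the setting of the paper.
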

%We now prove Lemma \ref{moment_control_appendix}.
\begin{proof}%[Proof of Lemma \ref{moment_control_appendix}]
Set $V:\mathcal{H} \to \mathbb{R}_{\geq 0}$ by $V(x) := \left\| x \right\|_{\mathcal{H}}, x \in \mathcal{H}$.
    Then, we have that there exist constants $\rho \in (0,1)$ and $\kappa > 0$ such that
    \begin{align}
        \left(\mathcal{P}V\right)(x)\leq \rho V(x) + \kappa,\quad x \in \mathcal{H}, \nonumber
    \end{align}
    as in the proof of Theorem \ref{exponential_ergodic}. Setting $\tilde{V} := 1 + V$ and adjusting $\kappa > 0$ if necessary, we obtain
    \begin{align}
        \left(\mathcal{P}\tilde{V}\right)(x)\leq \rho \tilde{V}(x) + \kappa, \quad x \in \mathcal{H}. \nonumber
    \end{align}
    By Lemma 2.5 in the supplementary material, there exist constants $R > 0$, $\rho^{'} > 0$, and $b > 0$ such that
    \begin{align}
        \left(\mathcal{P}\tilde{V}\right)(x) - \tilde{V}(x)\leq -\rho^{'}\tilde{V}(x) + b \bm{1}_{\tilde{V}_{R}}(x), \quad x \in \mathcal{H}, \nonumber
    \end{align}
    where $\tilde{V}_{R} := \{x \in \mathcal{H} \mid \tilde{V}(x) \leq R\}$.
    Applying Lemma 2.6 in the supplementary material, the claim follows.
\end{proof}

We are now ready to prove Corollary \ref{beta_mixing_for_far_chapter3}.

\begin{proof}[Proof of Corollary \ref{beta_mixing_for_far_chapter3}]
Since $\{X_t\}_{t \in \mathbb{N}}$ is stationary, 
the Riesz representation theorem and Lemma~2.7 in the supplementary material imply that for any $j \in \mathbb{N}$,
\begin{align}
\beta(j)
&= \int_{\mathcal{H}} 
\sup_{\substack{\phi \in \mathcal{C}_{b}(\mathcal{H})\\ 
\|\phi\|_{\mathcal{C}_{b}(\mathcal{H})} \leq 1}}
\left| (\mathcal{P}^{j}\phi)(x) - \int_{\mathcal{H}}\phi(z)\pi(dz) \right| \, \pi(dx). \label{aaa1}
\end{align}
Moreover, by Lemma~\ref{moment_control_appendix}, we have
\begin{align}
\int_{\mathcal{H}} \|x\|_{\mathcal{H}} \, \pi(dx) < \infty. \label{aaa2}
\end{align}
Combining \eqref{aaa1} and \eqref{aaa2} and applying Theorem~\ref{exponential_ergodic}, we obtain
\begin{align}
\beta(j)
&\lesssim \gamma^{j} \int_{\mathcal{H}} \bigl(1 + \|x\|_{\mathcal{H}}\bigr) \, \pi(dx)
\lesssim \gamma^{j},\nonumber
\end{align}
which completes the proof.
\end{proof}
\section{Proofs of the results in Section \ref{s42}}\label{s72}
In this section, we provide proofs of theorems and lemmas in Section \ref{s42}.
To apply Theorem \ref{thm1} to $\bm{G}$, we first verify that $\bm{G}$ satisfies \Aref{a6} and \Aref{a7}.
\begin{Lemma}\label{check_a6_and_a7}
Let $M > 0$.
   Fix $L \in \mathbb{N}$, $B \geq 1$, $F \geq 1$, $S > 0$, and $N \in \mathbb{N}$.
   Set $\mathcal{G} := \mathcal{F}_{p_{0}}^{(M)}(L,N,B,F,S)$ and $\bm{G} := (\Psi_{\psi})_{\psi \in \mathcal{G}}$.
   Then, $\bm{G}$ satisfies \Aref{a6} and \Aref{a7}.
\end{Lemma}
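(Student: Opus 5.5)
Assumption~\ref{a6} is immediate from the definition \eqref{PSI_psi}: for every $\psi\in\mathcal{G}$ and every $x\in\mathcal{H}$ with $\|x\|_\infty>M$ we have $\Psi_\psi(x)=0$. So the work lies in Assumption~\ref{a7}, namely compactness of $(\bm{G},\|\cdot\|_\infty)$.

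I would prove compactness by showing that $\bm{G}$ is the continuous image of a compact parameter set. Fix the architecture bounds $L,N$. Every $\psi\in\mathcal{G}$ can be written as $f_\theta\bm{1}_{D\times D\times[-M,M]}$, where $\theta$ ranges over the set
\[
\Theta:=\{\theta:\ |\theta|_\infty\le B,\ |\theta|_0\le S,\ \|f_\theta\|_{L^\infty(D\times D\times[-M,M])}\le F\}.
\]
Smaller depths and widths can be embedded in depth $L$ and width $N$ by padding with zero weights and identity-like ReLU layers. Alternatively, I would take $\Theta$ as a finite union over the finitely many architectures with depth at most $L$ and width at most $N$; a finite union of compact sets is compact, so this causes no difficulty.

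For a fixed architecture, $\Theta$ is a subset of a finite-dimensional Euclidean space. The constraint $|\theta|_\infty\le B$ is closed and bounded. The constraint $|\theta|_0\le S$ is closed, because it defines a finite union of coordinate subspaces. The constraint $\sup|f_\theta|\le F$ is closed because of the continuity shown below. Hence $\Theta$ is compact.

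The key estimate is Lipschitz continuity of the parameter-to-function map. On the compact domain $D\times D\times[-M,M]\subset[-M,M]^{2d+1}$ and with parameters bounded by $B$, the standard perturbation bound for ReLU networks gives
\[
\sup_{z}|f_\theta(z)-f_{\theta'}(z)|\le C(L,N,B,M)\,|\theta-\theta'|_\infty.
\]
I would prove this by induction over layers, using that ReLU is $1$-Lipschitz and that intermediate activations are bounded by $(NB)^{l}(M+1)$.

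Next I would pass from kernels to operators. For $\|x\|_\infty\le M$ and $\bm{u}\in D$,
\[
|\Psi_{\psi}(x)(\bm{u})-\Psi_{\psi'}(x)(\bm{u})|\le\int_D|\psi(\bm{u},\bm{v},x(\bm{v}))-\psi'(\bm{u},\bm{v},x(\bm{v}))|\,d\bm{v}\le\|\psi-\psi'\|_\infty.
\]
Since $D$ has unit volume, $\|\Psi_\psi(x)-\Psi_{\psi'}(x)\|_{\mathcal{H}}\le\|\psi-\psi'\|_\infty$. For $\|x\|_\infty>M$ both operators vanish. Therefore $\|\Psi_\psi-\Psi_{\psi'}\|_\infty\le\|\psi-\psi'\|_\infty$.

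Composing the two maps, $\theta\mapsto\Psi_{f_\theta\bm{1}}$ is continuous from the compact set $\Theta$ onto $\bm{G}$, so $\bm{G}$ is compact. Because $\bm{G}$ is then a subset of a metric space under $\|\cdot\|_\infty$, this yields Assumption~\ref{a7}.

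One measurability point needs care. The sets $\{x:\|x\|_\infty\le M\}$ and the values $x(\bm{v})$ are only meaningful for continuous representatives. I would either work on the continuous functions, as Assumption~\ref{a3} permits, or note that the pseudometric $\|\cdot\|_\infty$ on operators is unaffected by this choice.

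I expect the main obstacle to be the Lipschitz parameter bound for ReLU networks. Its constant grows like $(L+1)(NB+1)^{L}(M+1)$, which is finite for fixed $L,N,B,M$ and so is enough for this lemma. The same estimate will be reused later for covering-number bounds of $\bm{G}$.
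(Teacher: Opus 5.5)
Your proof is correct and follows the same route as the paper: $\bm{G}$ is the image of the compact class $\mathcal{G}$ under the $1$-Lipschitz map $\psi\mapsto\Psi_\psi$, using $\|\Psi_\psi-\Psi_{\psi'}\|_\infty\le\|\psi-\psi'\|_\infty$. Your Lipschitz estimate for the parameter-to-network map, together with the closedness of the sparsity and sup-norm constraints and the finite union over architectures, fills in what the paper compresses into the bare claim that ``$\mathcal{G}$ is compact in $\mathbb{R}^P$.'' One small slip: the inclusion $D\times D\times[-M,M]\subset[-M,M]^{2d+1}$ requires $M\ge 1$, which is harmless because your constant already uses $M+1$.
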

\begin{proof}[Proof of Lemma \ref{check_a6_and_a7}]
Let $\psi \in \mathcal{G}$. From the definition of $\Psi_{\psi}$, 
we have that the domain of $\Psi_{\psi}$ is included in the bounded set $\{x \in \mathcal{H} : \|x\|_{\infty} \leq M\}$, which verifies \Aref{a6}.
We now prove \Aref{a7}. Define $I:\mathcal{G}\to\bm{G}$ by $I(\psi):=\Psi_{\psi}, \psi \in \mathcal{G}$.
Then $I$ is continuous from the normed space $\big(\mathcal{G}, \| \cdot \|_{\infty}\big)$ to the normed space $\big(\bm{G}, \|\cdot\|_{\infty}\big)$.
Since $\mathcal{G}$ is compact in $\mathbb{R}^{P}$ with $P = 2(d+1)N + (L-1)(N^2 + N) + (N+1)$, it follows from the continuity of $I$ that $\mathrm{Im}(I)$ is compact. 
Finally, because $\mathrm{Im}(I)=\bm{G}$, we conclude that $\bm{G}$ is compact, which is precisely \Aref{a7}.
\end{proof}

% ここまでの議論から作用素のクラス $\bm{G}$ に対して定理 \ref{thm1} を適用することができる．
% したがって汎化誤差 $R(\Psi_{\widehat{\psi}_{T}},\Psi_{\psi_{0}^{*}})$ の導出を行うためには以下の2つの補題を証明できればよい:
% \begin{Lemma}\label{covering_for_dnn}
%      任意に $L > 0, B \geq 1, F \geq 1, S > 0, N \in \mathbb{N}$ を固定する．
%      上で定めた $\bm{G}$ に対して
%     \begin{align}
%       \mathcal{N}(\bm{G},\left\| \cdot \right\|_{\infty,\infty},\delta)
%       &\leq 2S(L + 1)\log\left(\frac{(L + 1)(N + 1)B}{\delta}\right), \nonumber
%     \end{align}
%     が成り立つ．
% \end{Lemma}
% \begin{Lemma}\label{bias_for_dnn}
%     任意に $L > 0, B \geq 1, F \geq 1, S > 0, N \in \mathbb{N}$ を固定する．
%     真の関数 $\psi_{0}^{*}$ について $\psi_{0}^{*} \in \mathcal{G}\left(q,\bm{d},\bm{t},\bm{\beta},K\right)$ を仮定する．
%     この時以下が成り立つ：
%     \begin{align}
%      \inf_{\Psi \in \bm{G}}R(\Psi,\Psi_{0}^{*})&\lesssim \max_{i = 0,\dots,q}N^{-\frac{2\beta_{i}^{*}}{t_{i}}}.\nonumber
%     \end{align}
% \end{Lemma}

Building on the above, we can apply Theorem \ref{thm1} to the hypothesis set $\bm{G}$.
Consequently, to derive the generalization error $R\left(\widehat{\Psi}_{T},\bar{\Psi}_{0}^{(M)}\right)$ it suffices to prove the following two lemmas.

\begin{Lemma}\label{covering_for_dnn}
Let $M > 0$. Fix $L \in \mathbb{N}$, $B \geq 1$, $F \geq 1$, $S > 0$, and $N \in \mathbb{N}$.
   Set $\mathcal{G} := \mathcal{F}_{p_{0}}^{(M)}(L,N,B,F,S)$ and $\bm{G} := (\Psi_{\psi})_{\psi \in \mathcal{G}}$.
Then, we have that for any $\delta > 0$
\begin{align}  
\log N\!\left(\delta, \bm{G},\|\cdot\|_{\infty}\right)
  \leq 2S(L + 1)\log\!\left(\frac{(L + 1)(N + 1)B}{\delta}\right). \nonumber
\end{align}
\end{Lemma}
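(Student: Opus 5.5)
The plan is to reduce the covering of the operator class $\bm{G}$ to a covering of the kernel class $\mathcal{G}$ in sup norm, and then invoke the standard covering-number bound for sparse ReLU networks (Lemma 5 in \citet{schmidt2019nonparametric}). That bound states that the class of networks with depth $L$, width at most $N$, at most $S$ nonzero parameters bounded by $B$, restricted to a bounded input box, satisfies
\[
\log N(\delta,\mathcal{G},\|\cdot\|_{\infty})\le (S+1)\log\!\bigl(2\delta^{-1}(L+1)V^{2}\bigr),\qquad V:=\prod_{l=0}^{L+1}(p_l+1).
\]
The right-hand side is then bounded by $2S(L+1)\log((L+1)(N+1)B/\delta)$ after absorbing constants.

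\textbf{Step 1 (transfer).} Let $\psi,\tilde\psi\in\mathcal{G}$ and $x\in\mathcal{H}$. If $\|x\|_\infty>M$, then $\Psi_\psi(x)=\Psi_{\tilde\psi}(x)=0$. If $\|x\|_\infty\le M$, then for every $\bm{u}\in D$,
\[
|\Psi_\psi(x)(\bm{u})-\Psi_{\tilde\psi}(x)(\bm{u})|\le\int_D|\psi-\tilde\psi|(\bm{u},\bm{v},x(\bm{v}))\,d\bm{v}\le\|\psi-\tilde\psi\|_\infty .
\]
Since $|D|=1$, this gives $\|\Psi_\psi(x)-\Psi_{\tilde\psi}(x)\|_{\mathcal{H}}\le\|\psi-\tilde\psi\|_\infty$. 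The sup norm of $\psi$ here is taken over $D\times D\times[-M,M]$, where $(\bm{u},\bm{v},x(\bm{v}))$ lives, using \Aref{a3}-type continuity only for measurability. Hence the map $\psi\mapsto\Psi_\psi$ is $1$-Lipschitz, so any $\delta$-cover of $\mathcal{G}$ maps to a $\delta$-cover of $\bm{G}$, and
\[
N(\delta,\bm{G},\|\cdot\|_\infty)\le N(\delta,\mathcal{G},\|\cdot\|_\infty).
\]
One subtlety is that the definition of covering requires the cover to lie in the class. Images of cover elements do lie in $\bm{G}$, provided the kernel cover lies in $\mathcal{G}$. If it does not, I would use the standard trick of passing from an external $\delta/2$-cover to an internal $\delta$-cover, which costs only a constant inside the logarithm.

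\textbf{Step 2 (parameter perturbation).} I would reprove the network bound directly rather than just cite it, since the constant form must match. First, on the input box $[0,1]^{2d}\times[-M,M]$, changing each parameter by at most $\eta$ changes the output by at most $\eta (L+1)\prod_l (p_l+1) B^{L}$ times an input-size factor. This follows from a telescoping argument over layers, using that ReLU is $1$-Lipschitz and that each layer's output is bounded by $(N+1)B$ times its input bound. Second, I fix a sparsity pattern: there are at most $\binom{P}{S}\le P^{S}$ choices, where $P\le (L+1)(N+1)^2$ is the number of parameters. Third, I grid each active parameter in $[-B,B]$ with mesh $\eta$. The count is then
\[
P^{S}(2B/\eta+1)^{S},
\]
and with $\eta\asymp\delta/\bigl((L+1)((N+1)B)^{L+1}\bigr)$ this yields $\log N\le S\log P+S\log(3B/\eta)$.

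\textbf{Main obstacle.} Collecting these terms into the clean form $2S(L+1)\log((L+1)(N+1)B/\delta)$ is the hard step. The factor $((N+1)B)^{L+1}$ contributes $(L+1)\log((N+1)B)$, and $\log P\lesssim\log((L+1)(N+1))$. So the total is at most $S(L+1)\log((N+1)B)+S\log((L+1)/\delta)+S\log((L+1)(N+1)^2)+O(S)$, which is $\le 2S(L+1)\log((L+1)(N+1)B/\delta)$ under $B\ge1$, $N\ge1$, and $\delta$ small (say $\delta\le1$). For large $\delta$ the bound is trivial, or is handled by monotonicity.

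The delicate points are the dependence on $M$ through the input bound, and the truncation $\|f\|_\infty\le F$. I expect to absorb the former into the constant or to assume it through $B$ bounding inputs after the first layer; I expect to handle the latter via the internal-cover trick of Step 1.
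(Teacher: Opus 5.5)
Your Step~1 is exactly the paper's reduction. The map $\psi\mapsto\Psi_\psi$ is $1$-Lipschitz from $(\mathcal{G},\|\cdot\|_\infty)$ to $(\bm{G},\|\cdot\|_\infty)$, so $N(\delta,\bm{G},\|\cdot\|_\infty)\le N(\delta,\mathcal{G},\|\cdot\|_\infty)$. The paper bounds the double integral by Cauchy--Schwarz and you bound the integrand pointwise; either works. The paper then stops. It applies a covering bound already formulated for the truncated class $\mathcal{F}^{(M)}_{p_0}(L,N,B,F,S)$ (Lemma~2.9 of the supplement, i.e.\ Proposition~8 of \citet{OhnKim2022}), stated in exactly the form $2S(L+1)\log((L+1)(N+1)B/\delta)$ for $\delta\in(0,1)$.

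The gap is that your self-contained Step~2 cannot reach this form. On the input box $[0,1]^{2d}\times[-M,M]$, the parameter-perturbation constant picks up a factor $\max(1,M)$ from the input size. It also picks up a factor $p_0+1=2d+2$ from the first layer, and the parameter count exceeds $(L+1)(N+1)^2$ when $N<2d+1$. Your count therefore yields the target plus terms of order $S\log\max(1,M)$ and $S\log(2d+2)$. The target has explicit constants and contains neither $M$ nor $d$, so there is no constant to absorb these terms into. The slack left in $2S(L+1)\log((L+1)(N+1)B/\delta)$ covers them only for restricted ranges of $M$ and $d$.

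A sharper count will not fix this, because an $M$-free bound cannot hold uniformly in $M$. Consider the ramps $f_k(\bm{u},\bm{v},x)=\sigma(2^{-k}x)-\sigma(2^{-k}x-1)$ for $0\le k\le\lfloor\log_2 M\rfloor$.
\begin{itemize}
\item They lie in the class with $L=1$, $B=F=1$, $S=5$ and a fixed width.
\item For $j\neq k$ they satisfy $\|f_j-f_k\|_\infty\ge 1/2$; evaluate at $x=2^{\min(j,k)}$.
\item Testing $\Psi_{f_k}$ on constant inputs $x\equiv c$ with $|c|\le M$ gives the same separation in $\bm{G}$.
\end{itemize}
Hence $\log N(\delta,\bm{G},\|\cdot\|_\infty)\ge\log(\lfloor\log_2 M\rfloor+1)$ for $\delta<1/4$, which eventually exceeds the $M$-free right-hand side.

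The exact statement is therefore reachable only through the cited bound, as in the paper. What your argument honestly proves is the version with $\max(1,M)$, and a $d$-dependent factor, inside the logarithm. That version is all Theorem~\ref{convergence_rate} needs: its proof uses only $\log\mathcal{N}_T\le c_1S(L+1)\log((L+1)(N+1)BT)$ with an unspecified $c_1$ and $M,d$ fixed.

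Separately, your remark that large $\delta$ is trivial or follows by monotonicity is wrong. Both sides decrease in $\delta$, and for $\delta>(L+1)(N+1)B$ the right-hand side is negative while the left-hand side is nonnegative. The inequality should be restricted to $\delta\in(0,1)$. That is the range of the cited result and the only range used, with $\delta=1/T$.
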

\begin{proof}%[Proof of Lemma \ref{covering_for_dnn}]
Let $\psi_{1},\psi_{2} \in \mathcal{G}$. 
We have that
for any $x \in \mathcal{H}$ with $\|x\|_{\infty}\le M$,
\begin{align}
\big\|\Psi_{\psi_{1}}(x)-\Psi_{\psi_{2}}(x)\big\|_{\mathcal{H}}
&=\Bigg\{\int_{D}\!\Big(\int_{D}\!\big(\psi_{1}(\bm{u},\bm{v},x(\bm{v}))-\psi_{2}(\bm{u},\bm{v},x(\bm{v}))\big)\,d\bm{v}\Big)^{2}du\Bigg\}^{1/2}\nonumber\\
&\le \Bigg(\int_{D}\!\int_{D}\!\big\{\psi_{1}(\bm{u},\bm{v},x(\bm{v}))-\psi_{2}(\bm{u},\bm{v},x(\bm{v}))\big\}^{2}\,d\bm{u}\,d\bm{v}\Bigg)^{1/2}\nonumber\\
&\le \left\|\psi_{1} - \psi_{2}\right\|_{\infty}. \nonumber
\end{align}
Hence, we have
\begin{align}
\|\Psi_{\psi_{1}} - \Psi_{\psi_{2}}\|_{\infty}
\leq \left\| \psi_{1} - \psi_{2} \right\|_{\infty}. \label{ineq1}
\end{align}
It follows that $N\!\left(\delta, \bm{G},\|\cdot\|_{\infty}\right)\le N\!\left(\delta,\mathcal{G},\|\cdot\|_{\infty}\right)$. By Lemma 2.9 in the supplementary material,
\[
\log N\!\left(\delta,\bm{G},\|\cdot\|_{\infty}\right)\le 2S(L+1)\log\!\left(\frac{(L+1)(N+1)B}{\delta}\right),
\]
which proves the claim.
\end{proof}

\begin{Lemma}\label{bias_for_dnn}
Let $M \geq 1$. Fix $L \in \mathbb{N}$, $B \geq 1$, $F \geq 1$, $S > 0$, and $N \in \mathbb{N}$.
   Set $\mathcal{G} := \mathcal{F}_{p_{0}}^{(M)}(L,N,B,F,S)$ and $\bm{G} := (\Psi_{\psi})_{\psi \in \mathcal{G}}$.
Suppose the true Urysohn kernel $\psi_{0}$ satisfies \Aref{m3}.
Then
\begin{align}
 \inf_{\Psi \in \bm{G}} R\left(\Psi,\bar{\Psi}_{0}^{(M)}\right) \lesssim \max_{i=0,\dots,q} N^{-\frac{2\beta_{i}}{t_{i}}}. \nonumber
\end{align}
\end{Lemma}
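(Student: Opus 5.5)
The plan is to reduce the operator-level approximation error to a sup-norm approximation error for the Urysohn kernel. Then I invoke a Schmidt-Hieber type approximation result for ReLU networks on composite Hölder functions (presumably among the supplementary lemmas, in the spirit of Lemma 2.9 used for covering numbers).

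\textbf{Step 1: reduction to the kernel.} Since $\bar{\Psi}_0^{(M)}=\Psi_{\bar\psi_0^{(M)}}$, for any $\psi\in\mathcal{G}$ the same Jensen/Cauchy--Schwarz computation as in the proof of Lemma \ref{covering_for_dnn} gives, for every $x$ with $\|x\|_\infty\le M$,
\[
\|\Psi_\psi(x)-\Psi_{\bar\psi_0^{(M)}}(x)\|_{\mathcal{H}}\le \sup_{\bm{u},\bm{v}\in D,\,|y|\le M}|\psi(\bm{u},\bm{v},y)-\bar\psi_0^{(M)}(\bm{u},\bm{v},y)|.
\]
When $\|x\|_\infty>M$, both operators vanish. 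Hence
\[
R(\Psi_\psi,\bar\Psi_0^{(M)})\le \|\psi-\bar\psi_0^{(M)}\|_{L^\infty(D\times D\times[-M,M])}^2,
\]
uniformly in the distribution of $X'_t$. This step needs no stationarity and no mixing. One measurability caveat applies: $x(\bm v)$ is evaluated pointwise, which is fine by \Aref{a3}. It follows that
\[
\inf_{\Psi\in\bm G}R(\Psi,\bar\Psi_0^{(M)})\le \inf_{\psi\in\mathcal G}\|\psi-\bar\psi_0^{(M)}\|_\infty^2 .
\]

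\textbf{Step 2: network approximation.} By \Aref{m3}, $\bar\psi_0^{(M)}$ restricted to $D\times D\times[-M,M]\subset[-M,M]^{2d+1}$ (recall $M\ge1$) lies in $\mathcal{C}(q,\bm d,\bm t,\bm\beta,M)$. I would apply the composite-function approximation theorem for sparse ReLU networks (Schmidt-Hieber 2020, Theorem 5 / its corollary used in \citet{kurisu2024adaptive}). It yields a network $f$ with depth $L\asymp\log N$, width $\lesssim N$, sparsity $S\asymp N\log N$, weights bounded by $1\le B$ and $\|f\|_\infty\le F$ such that
\[
\|f-\bar\psi_0^{(M)}\|_\infty\lesssim \max_i N^{-\beta_i/t_i}.
\]
Two preliminary adjustments are needed. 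First, rescale inputs from $[-M,M]$ to $[0,1]$ via an affine first layer; this costs a constant in $B$ or an extra layer. Second, truncate the output to $[-F,F]$ with two ReLU layers, which does not increase the error once $F\ge\|\bar\psi_0^{(M)}\|_\infty$. Then $\psi:=f\bm 1_{D\times D\times[-M,M]}\in\mathcal G$.

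Squaring this bound and combining it with Step 1 gives the claim. Implicitly, the lemma is meant under the scaling $L\asymp\log N$, $S\asymp N\log N$ of Theorem \ref{convergence_rate}, with $B,F$ large enough, and the hidden constant depends on $q,\bm d,\bm t,\bm\beta,M$.

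\textbf{Main obstacle.} The delicate part is Step 2's bookkeeping: matching the architecture constraints $(L,N,B,F,S)$ in the definition of $\mathcal G$ to those delivered by the approximation theorem. The issues are the domain rescaling from $[-M,M]$, the parameter bound $B$ (Schmidt-Hieber's construction uses weights in $[-1,1]$, so $B\ge1$ suffices after rescaling), and the output clipping. I would handle these by composing the cited network with small fixed ReLU gadgets and absorbing the constant overhead into the $\asymp$ relations. There is also a discontinuity of $\bar\psi_0^{(M)}$ at $|y|=M$, but it is harmless because approximation is only required on the closed set $|y|\le M$.
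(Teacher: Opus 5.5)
Your proposal is correct and follows the paper's proof: you bound $R(\Psi_\psi,\bar\Psi_0^{(M)})\le\|\psi-\bar\psi_0^{(M)}\|_\infty^2$ via the Cauchy--Schwarz/Jensen step and then invoke Schmidt-Hieber's composite-function approximation bound (his equation (26)). Your bookkeeping (input rescaling, output clipping, and the implicit requirements $L\asymp\log N$, $S\asymp N\log N$, $B,F$ large) is more explicit than the paper, which simply cites (26); one caveat, shared with the paper's statement, is that Schmidt-Hieber's bound is in terms of $\beta_i^*=\beta_i\prod_{\ell=i+1}^{q}(\beta_\ell\wedge 1)$, so writing $\beta_i$ is justified only when $\beta_\ell\ge 1$ for all $\ell\ge 1$.
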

%We now prove Lemmas \ref{covering_for_dnn} and \ref{bias_for_dnn}.
\begin{proof}%[Proof of Lemma \ref{bias_for_dnn}]
For any $\psi \in \mathcal{G}$, we have that
\begin{align}
R(\Psi_{\psi},\bar{\Psi}_{0}^{(M)})
&=\mathbb{E}\!\left[\int_{D}\!\Big(\int_{D}\psi(\bm{u},\bm{v},X(\bm{v}))\,d\bm{v}-\int_{D}\bar{\psi}_{0}^{(M)}(\bm{u},\bm{v},X(\bm{v}))\,d\bm{v}\Big)^{2}\,d\bm{u}\right]\nonumber\\
&\leq \left\|\psi -\bar{\psi}_{0}^{(M)}\right\|_{\infty}^{2}. \nonumber
\end{align}
Consequently,
\[
\inf_{\Psi\in\bm{G}} R\left(\Psi,\bar{\Psi}_{0}^{(M)}\right)
\leq \inf_{\psi\in\mathcal{G}}
\left\|\psi  - \bar{\psi}_{0}^{(M)}\right\|_{\infty}^{2}.
\]
From \Aref{m3}, we can apply the equation~(26) of \citet{schmidt2019nonparametric}, which yields the desired result.
\end{proof}

From these lemmas, Theorem \ref{convergence_rate} follows.

\begin{proof}[Proof of Theorem \ref{convergence_rate}]
By Lemma~\ref{check_a6_and_a7}, the hypothesis set $\bm{G}$ satisfies
\Aref{a6} and \Aref{a7}, and for any $\delta>0$,
\begin{align}
    \log N\!\left(\delta, \bm{G},\|\cdot\|_{\infty}\right)
    \leq 2S(L+1)\log\!\left(\frac{(L+1)(N+1)B}{\delta}\right). \nonumber
\end{align}
Applying Theorem \ref{thm1}, we find that for any $\varepsilon \in (0,1)$ there exists a constant $C_{\varepsilon}>0$ depending only on $(\varepsilon,C_{1,\beta},C_{2,\beta},K)$ with $K = 
 \max\{\sqrt{\left\| Q \right\|_{\mathcal{L}}}, \sqrt{\left\| Q \right\|_{\mathbb{T}}}, K^{'}, 1\}$ such that for all $T \in \mathbb{N}$ with $\log T \geq 2$ and $C_{2,\beta} \geq \frac{2\log T}{T}$,
\begin{align}
R\left(\widehat{\Psi}_{T},\bar{\Psi}_{0}^{(M)}\right) \leq
    \frac{1+\varepsilon}{1-\varepsilon}\!\left(\Delta(\widehat{\Psi}_{T}) 
    + \inf_{\Psi \in \bm{G}} R\left(\Psi,\bar{\Psi}_{0}^{(M)}\right)\right)
    + C_{\varepsilon}F^{2}\frac{\log N\!\left(\delta, \bm{G},\|\cdot\|_{\infty}\right)\big(\log T\big)}{T}. \label{temp1}
\end{align}
Let $\mathcal{N}_{T} := N\!\left(\frac{1}{T}, \bm{G}, \|\cdot\|_{\infty}\right) \lor \lfloor \exp(10) + 1 \rfloor$.
Then there exists a constant $c_{1}>0$ such that
\begin{align}
    \log \mathcal{N}_{T}
    \leq c_{1}\, S\,(L+1)\log\!\big((L+1)(N+1)BT\big). \nonumber
\end{align}
Moreover, from Lemma \ref{bias_for_dnn} and the assumption that $N \asymp \max_{0 \leq i \leq q}T^{\frac{t_{i}}{2\beta_{i} + t_{i}}}$,
we obtain that
\begin{align}
    \inf_{\Psi \in \bm{G}} R\left(\Psi,\bar{\Psi}_{0}^{(M)}\right)
    \lesssim \max_{0 \leq i \leq q}N^{-\frac{2\beta_{i}}{t_{i}}} \lesssim \max_{0 \leq i \leq q}T^{-\frac{2\beta_{i}}{2\beta_{i}+ t_{i}}}, \label{bias_rate}
\end{align}
Furthermore, from the assumption that $L \asymp \log N, S \asymp N\log N$ and $N \asymp \max_{0 \leq i \leq q}T^{\frac{t_{i}}{2\beta_{i} + t_{i}}}$, 
\begin{align}
  \frac{\log \mathcal{N}_{T}\,(\log T)}{T}
  \lesssim \frac{(\log T)(N \log N) (\log 2NT)}{T} \lesssim (\log T)^{4}\max_{0 \leq i \leq q}T^{-\frac{2\beta_{i}}{2\beta_{i} + t_{i}}}. \label{var_rate}
\end{align}
Combining \eqref{temp1}, \eqref{bias_rate} and \eqref{var_rate}, for any $\varepsilon \in (0,1)$,
\begin{align}
R\left(\widehat{\Psi}_{T},\bar{\Psi}_{0}^{(M)}\right)
     \lesssim (\log T)^{4}\max_{0 \leq i \leq q}T^{-\frac{2\beta_{i}}{2\beta_{i} + t_{i}}}. \nonumber
\end{align}
\end{proof}

\section{Proofs for theorems and lemmas}
In this supplementary material, all notation is consistent with that used in the main paper.
\subsection{Proofs of Theorem~4.1}
In this part, we provide proofs of the theorems and lemmas in Section~4.1.
In this paper, we prove Theorem 4.1 in Section~4.1 following the argument of \citet{kurisu2024adaptive}.
Our contribution is essentially an extension of their result to the infinite-dimensional setting.
For completeness, we include the full proof in the appendix so that the paper is self-contained.
% We introduce the following notation. For any $x \in \mathbb{R}$, let $\lfloor x \rfloor$ denote the integer satisfying $\lfloor x \rfloor \le x < \lfloor x \rfloor + 1$.
We begin by proving the following two lemmas, which are used to prove Theorem 4.1.

\begin{Lemma}\label{lemmac1}
Suppose that Assumption~4.2 holds.
Let $\delta > 0$ and suppose that $N(\delta, \mathcal{F}, \| \cdot \|_{\infty}) < \infty$. Let $\mathcal{N}_{\delta}$ be an integer such that $\mathcal{N}_{\delta} \geq N(\delta, \mathcal{F}, \| \cdot \|_{\infty}) \lor \exp(10)$. 
Also, let $a_T$ be a positive number such that $\mu_T := \left\lfloor \frac{T}{2a_T} \right\rfloor > 0$.
In addition, suppose that there is a number $F \geq 1$ such that 
$\| \Psi \|_{\infty} \leq F$ for all $\Psi \in \mathcal{F} \cup \left\{ \bar{\Psi}_{0}^{(M)} \right\}.$
Then, for all $\varepsilon \in (0,1]$,
\begin{align}
R\left(\widehat{\Psi}_{T}, \bar{\Psi}_{0}^{(M)}\right)
\leq &(1 + \varepsilon) \widehat{R}_{T}\left(\widehat{\Psi}_{T}, \bar{\Psi}_{0}^{(M)}\right)
+ \frac{21(1 + \varepsilon)^2}{\varepsilon} \cdot \frac{F^2 \log \mathcal{N}_{\delta}}{\mu_T}
+ \frac{4F^2}{\mu_T} \nonumber\\
&+ 4(2 + \varepsilon) F^2 \beta(a_T)
+ 4(2 + \varepsilon) F \delta. \nonumber
\end{align}
\end{Lemma}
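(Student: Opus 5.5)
We follow the blocking argument of Kurisu et al.\ (2024, Lemma C.1 type), adapted to $\mathcal{H}$-valued operators. Here $\widehat{R}_T(\Psi,\bar\Psi_0^{(M)}):=\mathbb{E}[T^{-1}\sum_{t=1}^T\|\Psi(X_t)-\bar\Psi_0^{(M)}(X_t)\|_{\mathcal{H}}^2]$ is the in-sample risk, and ``Assumption~4.2'' is the exponential $\beta$-mixing assumption \Aref{a4}. The only features of $\Psi$ we use are that it is measurable, bounded by $F$, and that $\|\Psi-\Phi\|_\infty\le\delta$ controls the losses uniformly. Hence the real-valued losses $g_\Psi(x):=\|\Psi(x)-\bar\Psi_0^{(M)}(x)\|_{\mathcal{H}}^2\in[0,4F^2]$ reduce the problem to a scalar empirical-process problem, exactly as in finite dimensions.

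\textbf{Step 1 (discretization).} We fix a minimal $\delta$-cover $\{\Psi_1,\dots,\Psi_{\mathcal{N}}\}$ of $\mathcal{F}$ and choose $j^*$ with $\|\widehat\Psi_T-\Psi_{j^*}\|_\infty\le\delta$. Since $|g_\Psi(x)-g_\Phi(x)|\le \|\Psi-\Phi\|_\infty(\|\Psi\|_\infty+\|\Phi\|_\infty+2F)\le 4F\delta$, replacing $\widehat\Psi_T$ by $\Psi_{j^*}$ in both $R$ and $\widehat R_T$ costs at most a term of order $F\delta$. Keeping track of the $(1+\varepsilon)$ factor gives the $4(2+\varepsilon)F\delta$ term.

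\textbf{Step 2 (blocking and coupling).} We split $\{1,\dots,T\}$ into $2\mu_T$ consecutive blocks of length $a_T$, alternately labelled odd and even, and discard the remainder. The remainder has fewer than $2a_T$ terms, each bounded by $4F^2$, which contributes $O(F^2 a_T/T)=O(F^2/\mu_T)$; this produces the $4F^2/\mu_T$ term. By Berbee's coupling lemma (which applies on the Polish space $\mathcal{H}^{a_T}$), there exist block sequences $\tilde X$ whose odd blocks are mutually independent, each with the same law as the corresponding original block, and such that $\mathbb{P}(\text{block}\neq\tilde{\text{block}})\le\beta(a_T)$. Because the losses are bounded by $4F^2$, replacing $X$ by $\tilde X$ changes the expectations by at most $4F^2\beta(a_T)$ per parity class. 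Applying this to both parity classes, with the $(1+\varepsilon)$ weight, gives $4(2+\varepsilon)F^2\beta(a_T)$.

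\textbf{Step 3 (concentration over independent blocks).} For each $j$, the block sums $Z_{j,k}:=\sum_{t\in\text{block }k}g_{\Psi_j}(\tilde X_t)$, taken over odd blocks, are independent, lie in $[0,4F^2a_T]$, and have variance at most $4F^2a_T\,\mathbb{E}Z_{j,k}$. A Bernstein inequality combined with a union bound over the $\mathcal{N}$ cover elements, used in the standard ``ratio'' form $\mathbb{E}Z-(1+\varepsilon)\widehat Z\le$ (Bernstein deviation), yields
\[
\mathbb{E}\max_j\Big\{R(\Psi_j)-(1+\varepsilon)\widehat R^{\mathrm{odd}}(\Psi_j)\Big\}\lesssim \frac{(1+\varepsilon)^2}{\varepsilon}\frac{F^2\log\mathcal{N}_\delta}{\mu_T}.
\]
The same bound holds for even blocks, and averaging the two parity classes gives the stated $21(1+\varepsilon)^2F^2\log\mathcal{N}_\delta/(\varepsilon\mu_T)$. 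The condition $\mathcal{N}_\delta\ge e^{10}$ is used to absorb the additive constants from integrating the tail into the $\log\mathcal{N}_\delta$ term.

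\textbf{Main obstacle.} The hard part is Step 3 combined with the data-dependence of $j^*$. The bound $\mathbb{E}[R(\Psi_{j^*})-(1+\varepsilon)\widehat R(\Psi_{j^*})]\le\mathbb{E}\max_j(\cdots)$ requires comparing against the independent copy $X'$ that defines $R$, even though $j^*$ depends on $X$. We will write $R(\Psi_{j^*})=\mathbb{E}[\bar g_{\Psi_{j^*}}]$, where $\bar g$ denotes the expectation under $X'$ with the index $j^*$ held fixed, and then carry out the Bernstein tail integration with exact constants to match the numerical factor $21$.
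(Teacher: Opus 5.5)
Your proposal is correct, and its skeleton matches the paper's: a $\delta$-cover with a data-dependent index, odd/even blocks of length $a_T$, the Berbee--Rio coupling, a union bound over the cover, and a remainder of fewer than $2a_T$ terms.

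\textbf{Where the routes differ: the concentration step.}

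\emph{The paper} transplants Schmidt-Hieber's argument.
\begin{itemize}
\item It couples \emph{both} the sample and the ghost sample $X'$ into independent block vectors $g_\ell,g'_\ell\in\mathbb{R}^{\mathcal N_\delta}$.
\item It normalizes $\sum_\ell(g_{j,\ell}-g'_{j,\ell})$ by $2Fr_j$, where $r_j^2=\frac{4F^2a_T\log\mathcal N_\delta}{\mu_T}\vee\frac{1}{\mu_T}\sum_\ell\mathbb{E}g_{j,\ell}$.
\item It bounds $\mathbb{E}B$ and $\mathbb{E}B^2$ by Bernstein plus a union bound.
\item It turns the Cauchy--Schwarz estimate into the $(1+\varepsilon)$ form through the quadratic inequality (C.4) of Schmidt-Hieber.
\end{itemize}

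\emph{You} instead apply Bernstein directly to $m_j-(1+\varepsilon)S_j$, where $m_j=\mathbb{E}S_j$ and the variance is self-bounding: $\mathrm{Var}(S_j)\le 4F^2a_T m_j$. You absorb the mean via $(u+\varepsilon m_j)^2\ge 4\varepsilon u m_j$ and integrate the tail of the maximum. This is more elementary, and it needs no coupling of $X'$. For fixed $j$ the coupled blocks have exactly the original laws, and $X'$ is independent of $j^*$. So $\mathbb{E}[m_{j^*}]$ is exactly the corresponding part of $R(\Psi_{j^*},\bar{\Psi}_0^{(M)})$, and only the realized sample pays a coupling cost.

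\textbf{Your constants are within the stated ones.} Tracking them in your route gives, per parity class, a tail $\mathcal N_\delta\exp\{-3\varepsilon u/(32(1+\varepsilon)^2F^2a_T)\}$. Summing both classes and using $2a_T\mu_T\le T$ then yields:
\begin{itemize}
\item a term $\frac{32}{3}(1+1/\log\mathcal N_\delta)\frac{(1+\varepsilon)^2}{\varepsilon}\frac{F^2\log\mathcal N_\delta}{\mu_T}$;
\item a mixing term $4(1+\varepsilon)F^2\beta(a_T)$.
\end{itemize}
Since $\log\mathcal N_\delta\ge 10$, both are below the stated $21$ and $4(2+\varepsilon)$, so the lemma follows a fortiori.

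\textbf{Two small corrections.}
\begin{itemize}
\item Your explanation of the factor $(2+\varepsilon)$ is off. In the paper it comes from coupling the ghost sample (weight $1$) and the sample (weight $1+\varepsilon$). It does not come from the two parity classes, whose contributions are absorbed by $2a_T\mu_T\le T$.
\item In your display, the population term should be the odd-block part of $R$, not the full $R(\Psi_j)$. Likewise, $\widehat R^{\mathrm{odd}}$ must be the random block average of the coupled sample, not an expectation.
\end{itemize}
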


% 以下補題を述べるために必要な記号を追加する．\eqref{far} に登場する $\{\xi\}_{t \in \mathbb{N}}$ と同分布な確率要素として $\xi^{*}$ を任意に与える．
% $\xi^{*}$ はガウス過程であるから, \textcite{Giorgobiani2019SubGaussian} の定理 1 より
% ある定数 $K^{'} > 0$ が存在して 
% \begin{align}
%     &\mathbb{E}\left[\exp\{\left\|\xi_{t} \right\|_{\mathcal{H}}^{2}/(K^{'})^{2}\}\right] \leq 2, \nonumber
% \end{align}
% がなりたつ．

\begin{Lemma}\label{lemmac2}
 Suppose that Assumption 2.1, Assumption 4.1 hold and that $\mathcal{F}$ satisfies Assumption 4.3 -- 4.4.  
 Let $T$ satisfy $\log T \geq 2$. 
 Let $\delta > 0$ and
 assume that $N(\delta,\mathcal{F}, \left\| \cdot \right\|_{\infty}) < \infty$. Let $\mathcal{N}_{\delta}$ be an integer such that $\mathcal{N}_{\delta} \geq N(\delta,\mathcal{F}, \left\| \cdot \right\|_{\infty}) \lor \exp(10)$. 
 Moreover, suppose that there is a number $F \geq 1$ such that $\left\| \Psi \right\|_{\infty} \leq F$ for all $\Psi \in \mathcal{F} \cup \left\{\bar{\Psi}_{0}^{(M)}\right\}$. 
 Then, for all $\varepsilon \in (0,1)$ there exists a constant $C_{\varepsilon}$ 
 depending only on $(\varepsilon,K,C_{1,\beta},C_{2,\beta})$ with $K := 
 \max\{\sqrt{\left\| Q \right\|_{\mathcal{L}}}, \sqrt{\left\| Q\right\|_{\mathbb{T}}}, K^{'}, 1\}$ such that
\begin{align}
 &\widehat{R}\left(\widehat{\Psi}_{T},\bar{\Psi}_{0}^{(M)}\right) \leq \frac{1}{1 - \varepsilon}\Delta(\widehat{\Psi}_{T}) + \frac{1}{1 - \varepsilon}\inf_{\Psi \in \mathcal{F}}R\left(\Psi,\bar{\Psi}_{0}^{(M)}\right) + C_{\varepsilon}F^{2}\gamma_{\delta,T}, \nonumber
\end{align}
where $\gamma_{\delta,T} := \delta + \frac{\left(\log T\right)\left(\log \mathcal{N}_{\delta}\right)}{T} + \frac{1}{T}$.
\end{Lemma}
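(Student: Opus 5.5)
The plan follows the ``basic inequality'' argument of \citet{schmidt2019nonparametric} and \citet{kurisu2024adaptive}, adapted to $\mathcal{H}$-valued responses. Here $\widehat{R}(\widehat{\Psi}_T,\bar{\Psi}_0^{(M)}) := \mathbb{E}\bigl[\frac1T\sum_{t=1}^T\|\widehat{\Psi}_T(X_t)-\bar{\Psi}_0^{(M)}(X_t)\|_{\mathcal{H}}^2\bigr]$ denotes the in-sample risk.

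\textbf{Step 1: decomposition of the loss.} Write $X_{t+1}=\Psi_0(X_t)+\xi_t$ and split $\Psi_0=\bar{\Psi}_0^{(M)}+\Psi_0\bm{1}_{\{\|x\|_\infty>M\}}$. By \Aref{a6}, every $\Psi\in\mathcal{F}$ vanishes where $\|x\|_\infty>M$. Hence the functions $\Psi-\bar{\Psi}_0^{(M)}$ and $\Psi_0\bm{1}_{\{\|x\|_\infty>M\}}$ have disjoint supports in $x$, and
\[
\|X_{t+1}-\Psi(X_t)\|_{\mathcal{H}}^2=\|\Psi(X_t)-\bar{\Psi}_0^{(M)}(X_t)\|_{\mathcal{H}}^2-2\langle \xi_t,\Psi(X_t)-\bar{\Psi}_0^{(M)}(X_t)\rangle_{\mathcal{H}}+r_t .
\]
The remainder $r_t$ does not depend on $\Psi$. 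Fix an arbitrary deterministic $\tilde\Psi\in\mathcal{F}$. Then $\min_{\Phi}\hat\ell_T(\Phi)\le\hat\ell_T(\tilde\Psi)$, and the cross term for $\tilde\Psi$ has mean zero because $\xi_t$ is independent of $\mathcal{F}_t\ni X_t$. Together these give
\[
\widehat{R}(\widehat{\Psi}_T,\bar{\Psi}_0^{(M)})\le \Delta(\widehat{\Psi}_T)+R(\tilde\Psi,\bar{\Psi}_0^{(M)})+2\,\mathbb{E}\Bigl[\tfrac1T\textstyle\sum_{t}\langle\xi_t,\widehat{\Psi}_T(X_t)-\bar{\Psi}_0^{(M)}(X_t)\rangle_{\mathcal{H}}\Bigr].
\]
If the model is not started from stationarity, the expectation $R(\tilde\Psi,\cdot)$ is the time-averaged one in \eqref{generalization_gap_def}, so this step holds as written. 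The whole task is therefore to bound the last term, call it $2\,\mathbb{E}[A_T]$.

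\textbf{Step 2: discretisation.} Take a $\delta$-cover $\{\Psi_1,\dots,\Psi_{\mathcal{N}_\delta}\}$ of $\mathcal{F}$ in $\|\cdot\|_\infty$, and let $j^*$ be a (measurable) index with $\|\widehat{\Psi}_T-\Psi_{j^*}\|_\infty\le\delta$. Replacing $\widehat{\Psi}_T$ by $\Psi_{j^*}$ costs at most $\delta\,\mathbb{E}\|\xi_1\|_{\mathcal{H}}\le\delta\sqrt{\|Q\|_{\mathbb{T}}}\le K\delta$ in $A_T$. The same replacement inside the empirical risk $\frac1T\sum_t\|\Psi(X_t)-\bar{\Psi}_0^{(M)}(X_t)\|^2$ costs at most $4F\delta$.

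\textbf{Step 3: maximal inequality for the martingale term.} Set $g_j:=\Psi_j-\bar{\Psi}_0^{(M)}$, $S_j:=\sum_t\langle\xi_t,g_j(X_t)\rangle_{\mathcal{H}}$ and $V_j:=\sum_t\|g_j(X_t)\|_{\mathcal{H}}^2\le 4F^2T$. Conditionally on $\mathcal{F}_t$, the increment $\langle\xi_t,g_j(X_t)\rangle_{\mathcal{H}}$ is $\mathcal{N}(0,\langle Qg_j(X_t),g_j(X_t)\rangle_{\mathcal{H}})$ by \Aref{a1}, with variance at most $\|Q\|_{\mathcal{L}}\|g_j(X_t)\|^2$. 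Hence $\exp(\lambda S_j-\lambda^2\|Q\|_{\mathcal{L}}V_j/2)$ is a supermartingale with expectation at most $1$.

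Combine this with a union bound over $j$ and a peeling argument over the dyadic levels $V_j\in(2^{k-1},2^k]$, for $k$ ranging down to the level $V_j\approx 1$. This yields a tail bound for $\max_j S_j/\sqrt{V_j\vee 1}$ of order $K\sqrt{\log\mathcal{N}_\delta+\log\log T+u}$. Integrating the tail gives
\[
\mathbb{E}\bigl[S_{j^*}\bigr]\lesssim K\,\mathbb{E}\bigl[\sqrt{V_{j^*}\vee1}\bigr]\sqrt{\log\mathcal{N}_\delta+\log T}.
\]
By Cauchy--Schwarz and $2ab\le\varepsilon a^2+b^2/\varepsilon$,
\[
2\,\mathbb{E}[A_T]\le \varepsilon\,\widehat{R}(\widehat{\Psi}_T,\bar{\Psi}_0^{(M)})+C_\varepsilon F^2\Bigl(\tfrac{(\log T)(\log\mathcal{N}_\delta)}{T}+\tfrac1T+\delta\Bigr).
\]
Here I use $\log\mathcal{N}_\delta\ge10$ and $\log T\ge2$ to absorb $\log\mathcal{N}_\delta+\log T$ into $(\log T)(\log\mathcal{N}_\delta)$, and $F\ge1$ to absorb the Step 2 terms $K\delta$ and $4F\delta$ into $C_\varepsilon F^2\delta$. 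Moving $\varepsilon\widehat{R}$ to the left and dividing by $1-\varepsilon$ gives the claim, after taking the infimum over $\tilde\Psi$.

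\textbf{Main obstacle.} Step 3 is the delicate part, for two reasons. First, $V_{j^*}$ is random and depends on the same data as $j^*$, so a naive fixed-variance sub-Gaussian bound does not apply; the peeling over variance levels is what handles this and what produces the $\log T$ factor. Second, the constant must be tracked so that it depends only on $\varepsilon$ and $K$ (through $\|Q\|_{\mathcal{L}}$ and $\|Q\|_{\mathbb{T}}$), which the martingale structure permits. The mixing constants $C_{1,\beta},C_{2,\beta}$ are not needed for this lemma; they enter only through Lemma~\ref{lemmac1}.
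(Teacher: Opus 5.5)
Your proposal is correct. Steps 1 and 2 coincide in substance with the paper's argument: the identity \eqref{c2_0}, the bound $\Delta(\widehat{\Psi}_T,\Psi)\le\Delta(\widehat{\Psi}_T)$, the identity $\widehat{R}(\Psi,\bar{\Psi}_0^{(M)})=R(\Psi,\bar{\Psi}_0^{(M)})$ for deterministic $\Psi$, and the cover replacement costing $\delta\sqrt{\|Q\|_{\mathbb{T}}}$ and $4F\delta$. Step 3, however, is a genuinely different argument.

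The paper never uses the exact conditional law of the increments. It applies Lemma~\ref{lemmae1}, which holds for arbitrary square-integrable martingale differences, together with the self-normalized bound of Lemma~\ref{lemmae2}. It normalizes $A_j$ by $\sqrt{B_j^2+\mathbb{E}[B_J^2]}$, where $B_j^2$ contains the realized squares $\langle\Psi_j(X_t)-\bar{\Psi}_0^{(M)}(X_t),\xi_t\rangle_{\mathcal{H}}^2$ as well as the conditional variances. This handles the random normalization in one stroke, without peeling, and gives $\mathbb{E}[\xi_J^2]\le\frac12\log\mathcal{N}_\delta+\frac14\log 2$. The price is that $\mathbb{E}[B_J^2]$ contains $\sum_t\|\widehat{\Psi}_T(X_t)-\bar{\Psi}_0^{(M)}(X_t)\|_{\mathcal{H}}^2\|\xi_t\|_{\mathcal{H}}^2$. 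The paper controls this by truncating $\|\xi_t\|_{\mathcal{H}}$ at $\sqrt{2(K')^2\log T}$ and invoking \eqref{Kdash}, and that truncation is the actual source of the $\log T$ factor and of $K'$ in $K$.

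You instead exploit that, given $\mathcal{F}_t$, $\langle\xi_t,g_j(X_t)\rangle_{\mathcal{H}}$ is exactly $\mathcal{N}(0,\langle Qg_j(X_t),g_j(X_t)\rangle_{\mathcal{H}})$. So the predictable quantity $\|Q\|_{\mathcal{L}}V_j$ serves as variance proxy, and no moment of $\|\xi_t\|_{\mathcal{H}}$ beyond the first is needed. Peeling over the roughly $\log_2(4F^2T)$ dyadic levels of $V_j\le 4F^2T$ then costs only about $\log\log(4F^2T)$. Contrary to your closing remark, peeling therefore does not produce a $\log T$. Your argument actually yields the sharper term $(\log\mathcal{N}_\delta+\log\log(F^2T))/T$, which you then loosen to fit $\gamma_{\delta,T}$; the $\log\log F$ is harmless given the $F^2$ prefactor.

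In exchange, the paper's route needs no conditional moment generating function for $\langle\xi_t,h\rangle_{\mathcal{H}}$. Noise tails enter only through the bound on $\mathbb{E}[B_J^2]$, so the argument degrades gracefully under weaker tail assumptions, and it reuses off-the-shelf lemmas from \citet{kurisu2024adaptive}.

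One small correction: a tail bound on $Z:=\max_j S_j/\sqrt{V_j\vee1}$ does not directly give $\mathbb{E}[S_{j^*}]\lesssim K\,\mathbb{E}[\sqrt{V_{j^*}\vee1}]\sqrt{\log\mathcal{N}_\delta+\log T}$, because $Z$ and $V_{j^*}$ are dependent. Write $\mathbb{E}[S_{j^*}]\le\sqrt{\mathbb{E}[V_{j^*}\vee1]}\,\sqrt{\mathbb{E}[(Z\vee0)^2]}$ instead, which is all your AM--GM step uses.
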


We prove Theorem~4.1 by combining Lemma \ref{lemmac1} and Lemma \ref{lemmac2}.
\begin{proof}[Proof of Theorem 4.1]
% Let $\delta$ be any positive number.
% For simplicity, for any $\delta>0$ we denote
% $N_{\delta} := N\!\left(\delta, \mathcal{F}, \|\cdot\|_{\infty}\right)$.
  Let $a_{T}$ be any integer such that $\mu_{T} := \lfloor \frac{T}{2a_{T}} \rfloor > 0$. 
  Then, from Lemma \ref{lemmac1}, we have that for any $\varepsilon \in (0,1)$ and $\delta > 0$,
\begin{align}
R\left(\widehat{\Psi}_{T},\bar{\Psi}_{0}^{(M)}\right)\leq& 
  (1 + \varepsilon) \widehat{R}\left(\widehat{\Psi}_{T},\bar{\Psi}_{0}^{(M)}\right)
  + 4\left(2 + \varepsilon\right) F \delta + 21F^{2} \frac{(1 + \varepsilon)^{2}}{\varepsilon} \frac{\log \mathcal{N}_{\delta}}{\mu_{T}} \nonumber\\
  &+ 4\left(2 + \varepsilon\right)F^{2}\beta(a_{T}) + \frac{4F^{2}}{\mu_{T}}.\nonumber
\end{align}
Further, by Lemma \ref{lemmac2}, we have that for any $\varepsilon \in (0,1)$ there exists a positive constant $C_{\varepsilon}$ depending only on $(\varepsilon, K, C_{1,\beta}, C_{2,\beta})$ 
 such that
\begin{align}
R\left(\widehat{\Psi}_{T},\bar{\Psi}_{0}^{(M)}\right) \leq&
  \frac{1+ \varepsilon}{1 - \varepsilon}\left(\Delta(\widehat{\Psi}_{T}) + \inf_{\Psi \in \mathcal{F}}R\left(\Psi,\bar{\Psi}_{0}^{(M)}\right)\right) + \left(1 + \varepsilon\right)C_{\varepsilon}F^{2}\left(\delta + \frac{1}{T} + \frac{\left(\log T\right)\left(\log \mathcal{N}_{\delta}\right)}{T}\right) \nonumber\\
  &+ \frac{21(1 + \varepsilon)^2}{\varepsilon} \cdot \frac{F^2 \log \mathcal{N}_{\delta}}{\mu_T}
+ \frac{4F^2}{\mu_T}
+ 4(2 + \varepsilon) F^2 \beta(a_T)
+ 4(2 + \varepsilon) F \delta. \nonumber
\end{align}
Letting $a_{T}:= \lfloor C_{2,\beta}^{-1}\log T \rfloor$, which implies $\beta(a_{T}) \lesssim \frac{1}{T}$,
we find that $\mu_{T} > 0$ under the assumption $C_{2,\beta} \geq \frac{2\log T}{T}$. 
We also have that $\frac{1}{\mu_{T}} \lesssim \frac{\log T}{T}$ from the definition of $\mu_{T}$.
Based on the above estimates, we obtain that
\begin{align}
  R\left(\widehat{\Psi}_{T}, \bar{\Psi}_{0}^{(M)}\right) 
  \leq&\frac{1+ \varepsilon}{1 - \varepsilon}\left(\Delta(\widehat{\Psi}_{T}) + \inf_{\Psi \in \mathcal{F}}R\left(\Psi,\bar{\Psi}_{0}^{(M)}\right)\right) 
  + \left(1 + \varepsilon\right)C_{\varepsilon}F^{2}
  \frac{\left(\log T\right)\left(\log \mathcal{N}_{\delta}\right)}{T}\nonumber\\
  &+ \left(8\left(2 + \varepsilon\right) + 2C_{\varepsilon}(1 + \varepsilon)\right)\frac{F^{2}}{T} + \left(21 \frac{(1 + \varepsilon)^{2}}{\varepsilon} \log \mathcal{N}_{\delta} + 4\right)F^{2}\frac{\log T}{T} \nonumber\\
  \lesssim &\frac{1+ \varepsilon}{1 - \varepsilon}\left(\Delta(\widehat{\Psi}_{T}) + \inf_{\Psi \in \mathcal{F}}R\left(\Psi,\bar{\Psi}_{0}^{(M)}\right)\right) + C_{\varepsilon}^{'}F^{2}\frac{\left(\log \mathcal{N}_{\delta}\right)(\log T)}{T}, \nonumber
\end{align}
where $C_{\varepsilon}^{'}$ is a constant depending only on $(\varepsilon, K, C_{1,\beta}, C_{2,\beta})$,
which is the desired bound and hence proves Theorem~4.1.
\end{proof}
\subsection{Proofs of lemma \ref{lemmac1} and \ref{lemmac2}}
 We will provide the proof for Lemma \ref{lemmac1} and Lemma \ref{lemmac2}. We begin by proving Lemma \ref{lemmac1}. 
\begin{proof}[Proof of Lemma \ref{lemmac1}]
Let $\delta$ be any positive number and
$\{\Psi_{1},\dots,\Psi_{\mathcal{N}_{\delta}}\}$ be a $\delta$ covering of $\mathcal{F}$ with respect to $\left\| \cdot \right\|_{\infty}$.
Let $J$ be a random variable taking values in $\{1,\dots,\mathcal{N}_{\delta}\}$ satisfying $\left\| \Psi_{J} - \widehat{\Psi}_{T} \right\|_{\infty} \leq \delta$. 
\setcounter{proofstep}{0}  % Reset for each proof
\proofstep{Reduction to independence}
We rely on the coupling technique for $\beta$-mixing sequences to construct independent blocks. See \citet{rio2013inequalities}. For $\ell = 0, \dots, \mu_T - 1$, let $I_{1,\ell} = \{2\ell a_T + 1, \dots, (2\ell + 1)a_T\}, \, I_{2,\ell} = \{(2\ell + 1)a_T + 1, \dots, 2(\ell + 1)a_T\}$. Define
\begin{align}
\tilde{g}_{\ell} &:= 
(\tilde{g}_{1,\ell}, \dots, \tilde{g}_{\mathcal{N}_{\delta},\ell})^{\top} = 
\left( \sum_{t \in I_{1,\ell}} \left\|\Psi_{1}(X_t) - \bar{\Psi}_{0}^{(M)}(X_t)\right\|_{\mathcal{H}}^2
, \dots, 
\sum_{t \in I_{1,\ell}} \left\|
\Psi_{\mathcal{N}_{\delta}}(X_t) - \bar{\Psi}_{0}^{(M)}(X_t)\right\|_{\mathcal{H}}^2 \right)^{\top},\nonumber\\
\tilde{g}_{\ell}^{\prime} &:= 
(\tilde{g}^\prime_{1,\ell}, \dots, \tilde{g}^\prime_{\mathcal{N}_{\delta},\ell})^{\top} = \left( \sum_{t \in I_{1,\ell}} 
\left\|\Psi_{1}(X_t^\prime) - \bar{\Psi}_{0}^{(M)}(X_t^\prime)\right\|_{\mathcal{H}}^{2},
 \dots, \sum_{t \in I_{1,\ell}} \left\|\Psi_{\mathcal{N}_{\delta}}(X_t^\prime) - \bar{\Psi}_{0}^{(M)}(X_t^\prime)\right\|_{\mathcal{H}}^{2} \right)^{\top}. \nonumber
\end{align}

In the following, we extend the probability space if necessary and assume that there is a sequence $\{U_\ell\}_{\ell = 0}^{\infty}$ of i.i.d. uniform random variables over $[0,1]$ independent of $\{X_{t}\}_{t = 1}^{T}$.

We will show that there exist two sequences of independent $\mathbb{R}^{\mathcal{N}_{\delta}}$-valued random variables $\{g_\ell\}_{\ell = 0}^{\mu_T - 1}$ and $\{g^\prime_\ell\}_{\ell = 0}^{\mu_T - 1}$ such that for all $\ell = 0, \dots, \mu_T - 1$,
\begin{align}
    |\mathbb{E}[g_{j,\ell}] - \mathbb{E}[\tilde{g}_{j,\ell}]| &\leq 4F^2 a_T \beta(a_T), \label{diff_ori_and_cou} \\
    |\mathbb{E}[g^\prime_{j,\ell}] - \mathbb{E}[\tilde{g}^\prime_{j,\ell}]| &\leq 4F^2 a_T \beta(a_T). \label{diff_ori_and_cou2}
\end{align}

where $g_{j,\ell}$ and $g^\prime_{j,\ell}$ are the $j$-th component of $g_\ell$ and $g^\prime_\ell$, respectively. We only prove \eqref{diff_ori_and_cou} since the proof of \eqref{diff_ori_and_cou2} is similar.

First we will show that there exists a sequence $\{g_\ell\}_{\ell = 0}^{\mu_T - 1}$ of independent random vectors in $\mathbb{R}^{\mathcal{N}_{\delta}}$ such that
\[
g_\ell \overset{d}{=} \tilde{g}_\ell, \quad \mathbb{P}(g_\ell \neq \tilde{g}_\ell) \leq \beta(a_T) \quad \text{for } 0 \leq \ell \leq \mu_T - 1.
\]

For all $\ell_1, \ell_2$, define the $\sigma$-field $\mathcal{A}(\ell_1, \ell_2)$ generated by $\{X_t\}_{t \in I(\ell_1, \ell_2)}$ where $I(\ell_1, \ell_2) := \bigcup_{\ell = \ell_1}^{\ell_2} I_{1,\ell}$. From the definition of $\tilde{g}_\ell$, we find that $\sigma(\tilde{g}_\ell) \subset \mathcal{A}(\ell, \ell)$ for all $\ell$. From Assumption 4.2 and Lemma \ref{rio} in the supplementary material, there exists a random vector $g_\ell$ such that $g_\ell \overset{d}{=} \tilde{g}_\ell$, independent of $\mathcal{A}(0, \ell - 1)$, and $\mathbb{P}(g_\ell \neq \tilde{g}_\ell) \leq \beta(a_T)$. Moreover, $g_\ell$ is measurable with respect to the $\sigma$-field generated by $\mathcal{A}(0,\ell) \cup \sigma(U_\ell)$. Therefore, for any $\ell$, $g_\ell$ is independent of $\{g_{\ell'}\}_{\ell' = 0}^{\ell - 1}$ since for $\ell_1 < \ell_2$, $g_{\ell_2}$ is independent of the $\sigma$-field generated by $\mathcal{A}(0, \ell_1)$ and $U_{\ell_1}$. This implies that $\{g_\ell\}_{\ell = 0}^{\mu_T - 1}$ is a sequence of independent random variables.

Next we will show \eqref{diff_ori_and_cou}. 
By definition we have $0 \leq \tilde{g}_{j,\ell} \leq 4F^2 a_T$ for all $j$. Since $g_\ell$ has the same law as $\tilde{g}_\ell$, we also have $0 \leq g_{j,\ell} \leq 4F^2 a_T$ a.s.\ for all $j$. Consequently, we have
\begin{align}
\left\lvert \mathbb{E}[g_{j,\ell}] - \mathbb{E}[\tilde{g}_{j,\ell}]\right\rvert &\leq \mathbb{E}[\left\lvert g_{j,\ell} - \tilde{g}_{j,\ell} \right\rvert \bm{1}_{\{g_{\ell} \neq \tilde{g}_{\ell}\}}] \leq 4F^2 a_T \mathbb{P}(g_{\ell} \neq \tilde{g}_{\ell}) \leq 4F^2 a_T \beta(a_T).\nonumber
\end{align}
\proofstep{Bounding the difference of the sum of independent blocks}
In this step, we will show that for any $\varepsilon \in (0,1]$, 
\begin{align}
  \mathbb{E}\left[\sum_{\ell = 0}^{\mu_{T}-1}g_{J,\ell}^{\prime}\right]&\leq \left( 1 + \varepsilon \right) \mathbb{E}\left[\sum_{\ell = 0}^{\mu_{T}-1}g_{J,\ell}\right] + \frac{21\left(1 + \varepsilon\right)^{2}}{\varepsilon}F^{2}a_{T}\log \mathcal{N}_{\delta}, \label{ber 1}\\
  \mathbb{E}\left[\sum_{\ell = 0}^{\mu_{T}-1}\tilde{g}_{J,\ell}^{\prime}\right]&\leq \left( 1 + \varepsilon \right) \mathbb{E}\left[\sum_{\ell = 0}^{\mu_{T}-1}\tilde{g}_{J,\ell}\right] + \frac{21\left(1 + \varepsilon\right)^{2}}{\varepsilon}F^{2}a_{T}\log \mathcal{N}_{\delta}. \label{ber 2}
\end{align}
% Since \eqref{ber 2} can be shown similarly as in the proof of \eqref{ber 1}, we only show \eqref{ber 1}.
For $j = 1,\dots,\mathcal{N}_{\delta}$, define $r_{j}$ and $B$ as 
\begin{align}
  r_{j}&:= \left( \frac{4F^{2} a_{T} \log \mathcal{N}_{\delta}}{\mu_{T}} \lor \frac{1}{\mu_{T}}\sum_{\ell = 0}^{\mu_{T}-1}\mathbb{E}\left[ g_{j,\ell} \right] \right)^{1/2},\quad B := \max_{1 \leq j \leq \mathcal{N}_{\delta}}\left\lvert \frac{ \sum_{\ell = 0}^{\mu_{T}-1}\left(g_{j,\ell} - g_{j,\ell}^{\prime}\right) }{2 F r_{j}} \right\rvert. \nonumber
\end{align}
Then, using the Cauchy-Schwarz inequality, we obtain that
\begin{align}
 \left\lvert \mathbb{E}\left[ g_{J,\ell} - g^{\prime}_{J,\ell} \right] \right\rvert& \leq 2 F \mathbb{E}\left[r_{J}B\right]\nonumber\\
 & \leq 2  \mathbb{E}\left[ \left(\frac{1}{\mu_{T}}\sum_{\ell = 0}^{\mu_{T}-1}\mathbb{E}\left[g_{J,\ell}\right] \right)^{1/2} \times B^{1/2} \right] + 2 \sqrt{\frac{4F^{2} a_{T} \log \mathcal{N}_{\delta}}{\mu_{T}}} \mathbb{E}\left[ B \right] \nonumber\\
 & \leq 2 \sqrt{\frac{\mathbb{E}[B^{2}]}{\mu_{T}}} \times \mathbb{E}\left[\sum_{\ell = 0}^{\mu_{T}-1}g_{J,\ell}\right]^{1/2} + 2 \sqrt{\frac{4F^{2}a_{T}\log \mathcal{N}_{\delta}}{\mu_{T}}} \mathbb{E}\left[B\right] . \label{oracle ineq 1}
\end{align}
Combining this with (C.4) in \citet{schmidt2019nonparametric}, we find that for any $\varepsilon \in (0,1]$,
\begin{align}
 \mathbb{E}\left[\sum_{\ell = 0}^{\mu_{T}-1}g_{J,\ell}^{\prime}\right]&\leq (1 + \varepsilon) \mathbb{E}\left[\sum_{\ell = 0}^{\mu_{T}-1}g_{J,\ell}\right] + 4F^{2}(1 + \varepsilon)\sqrt{\frac{a_{T}\log \mathcal{N}_{\delta}}{\mu_{T}}}\mathbb{E}\left[B\right] + \frac{(1 + \varepsilon)^{2}}{\varepsilon}\frac{\mathbb{E}[B^{2}]}{\mu_{T}}. \nonumber
\end{align}
Now, we show that
\begin{align}
  &\mathbb{E}\left[ B \right] \leq 3\sqrt{a_{T}\mu_{T}\log \mathcal{N}_{\delta}},\quad \mathbb{E}\left[ B^{2} \right] \leq 9 a_{T}\mu_{T} \log \mathcal{N}_{\delta}, \label{bernstein}
\end{align}
We find that for $j = 1, \dots, \mathcal{N}_{\delta}$
\begin{align}
  \left\lvert \frac{g_{j,\ell} - g^{\prime}_{j,\ell}}{2r_{j}F} \right\rvert &\leq \sqrt{\frac{a_{T}\mu_{T}}{\log \mathcal{N}_{\delta}}}, \nonumber\\
  \sum_{\ell = 0}^{\mu_{T}-1}\mathbb{E}\left[\left\lvert \frac{g_{j,\ell} - g_{j,\ell}^{\prime}}{2Fr_{j}}\right\rvert^{2}\right] &\leq 2\sum_{\ell = 0}^{\mu_{T}-1}\mathbb{E}\left[\frac{g_{j,\ell}^{2}}{(2Fr_{j})^{2}}\right] \leq 2\frac{4F^{2}a_{T}}{4F^{2}}\sum_{\ell = 0}^{\mu_{T}-1}\frac{\mathbb{E}\left[ g_{j,\ell} \right]}{\frac{1}{\mu_{T}}\sum_{\ell = 0}^{\mu_{T}-1}\mathbb{E}\left[g_{j,\ell}\right]} = 2a_{T}\mu_{T}. \nonumber
\end{align}
Thus, applying Bernstein's inequality (see Lemma 2.2.9 in \citet{vdvaart1996weak}), we have that for $j = 1,\dots,\mathcal{N}_{\delta}$,
\begin{align}
  &\mathbb{P}\left(B \geq u\right) \leq 2\exp\left\{-\frac{\frac{1}{2}u^{2}}{2a_{T}\mu_{T} + \frac{1}{3}\sqrt{\frac{\log \mathcal{N}_{\delta}}{a_{T}\mu_{T}}}u}\right\},\quad u \in \left(0,\infty\right). \nonumber
\end{align}
Here, let $\gamma > 0$ be a positive number satisfying $3\gamma^{2} - 12\gamma - 2 = 0$ and define $\alpha = \gamma \sqrt{a_{T}\mu_{T}\log \mathcal{N}_{\delta}}$. 
Then, we find that for $u \geq \alpha$
\begin{align}
 &\exp\left\{-\frac{\frac{1}{2}u^{2}}{2a_{T}\mu_{T} + \frac{1}{3}\sqrt{\frac{\log \mathcal{N}_{\delta}}{a_{T}\mu_{T}}}u}\right\} \leq \exp\left\{-\frac{\log \mathcal{N}_{\delta}}{\alpha}u \right\}. \nonumber
\end{align}
Combining these results, we obtain that
\begin{align}
  \mathbb{E}\left[B\right] =& \int_{0}^{\infty}\mathbb{P}\left( B > u \right)du \leq \alpha + \int_{\alpha}\mathbb{P}\left(B \geq u\right)du \nonumber \\
\leq& \alpha + 2\int_{\alpha}^{\infty}\mathcal{N}_{\delta}\exp\left\{-\frac{\log \mathcal{N}_{\delta}}{\alpha}x\right\}dx \nonumber\\
=& \alpha + 2\mathcal{N}_{\delta}  \frac{\alpha}{\log \mathcal{N}_{\delta}} \exp\left\{-\frac{\log \mathcal{N}_{\delta}}{\alpha} \alpha\right\} \nonumber\\
=& \left(1 + \frac{2}{\log \mathcal{N}_{\delta}} \right) \gamma \sqrt{a_{T}\mu_{T}\log \mathcal{N}_{\delta}} \nonumber\\
\leq& 3 \sqrt{a_{T}\mu_{T}\log \mathcal{N}_{\delta}}, \nonumber
\end{align}
and
\begin{align}
 \mathbb{E}\left[B^{2}\right] =& 2\int_{0}^{\infty}u\mathbb{P}\left(B \geq u\right)du \leq 2\int_{0}^{\alpha}u du + 2 \int_{\alpha}^{\infty} u \mathbb{P}\left(B \geq u \right)du\nonumber\\
 \leq & \alpha^{2} + 4\int_{\alpha}^{\infty}u \exp\left\{-\frac{\log \mathcal{N}_{\delta}}{\alpha}u\right\}du \nonumber\\
 \leq & \alpha^{2} + 4\mathcal{N}_{\delta}\left(\frac{\alpha^{2}}{\log \mathcal{N}_{\delta}} + \frac{\alpha^{2}}{\log^{2}\mathcal{N}_{\delta}}\right)\exp\left\{-\frac{\log \mathcal{N}_{\delta}}{\alpha} \alpha\right\} \nonumber\\
 \leq & 1.44 \gamma^{2} a_{T}\mu_{T}\log \mathcal{N}_{\delta} \leq 9 a_{T}\mu_{T}\log \mathcal{N}_{\delta}.\nonumber
\end{align}
Therefore, combining \eqref{oracle ineq 1} and \eqref{bernstein}, we find that
\begin{align}
 \mathbb{E}\left[\sum_{\ell = 0}^{\mu_{T}-1}g_{J,\ell}^{\prime}\right]&\leq (1 + \varepsilon) \mathbb{E}\left[\sum_{\ell = 0}^{\mu_{T}-1}g_{J,\ell}\right] + 4F^{2}(1 + \varepsilon)a_{T}\log \mathcal{N}_{\delta} + 9F^{2}\frac{(1 + \varepsilon)^{2}}{\varepsilon}a_{T}\log \mathcal{N}_{\delta} \nonumber\\
 &\leq \left( 1 + \varepsilon \right)\mathbb{E}\left[\sum_{\ell = 0}^{\mu_{T}-1}g_{J,\ell}\right] + \frac{21\left(1 + \varepsilon\right)^{2}}{\varepsilon}F^{2}a_{T}\log \mathcal{N}_{\delta}, \nonumber
\end{align}
where the last line follows from $1 \leq (1 + \varepsilon)/\varepsilon$.
%Thus, the inequality \eqref{ber 1} is shown.
\proofstep{Conclusion}
From \eqref{diff_ori_and_cou2} and \eqref{ber 1}, we have
\begin{align}
&\mathbb{E} \left[ \sum_{\ell=0}^{\mu_T - 1} \widetilde{g}_{J,\ell}^\prime \right] \leq \mathbb{E} \left[ \sum_{\ell=0}^{\mu_T - 1} g_{J,\ell}^\prime \right] + 4F^2 a_T \mu_T \beta(a_T) \nonumber\\
&\leq (1+\varepsilon) \mathbb{E} \left[ \sum_{\ell=0}^{\mu_T - 1} g_{J,\ell} \right]
+ \frac{21(1+\varepsilon)^2}{\varepsilon} F^2 a_T \log \mathcal{N}_{\delta} + 4F^2 a_T \mu_T \beta(a_T) \nonumber\\
&\leq (1+\varepsilon) \mathbb{E} \left[ \sum_{\ell=0}^{\mu_T - 1} \widetilde{g}_{J,\ell} \right]
+ \frac{21(1+\varepsilon)^2}{\varepsilon} F^2 a_T \log \mathcal{N}_{\delta} + 4(2+\varepsilon) F^2 a_T \mu_T \beta(a_T). \label{temp1}
\end{align}

Additionally, define
\begin{align}
\widetilde{h}_{J,\ell} &:= \sum_{t \in I_{2,\ell}} 
\left\| \Psi_{J}(X_{t}) - \bar{\Psi}_{0}^{(M)}(X_{t})\right\|_{\mathcal{H}}^{2}, \quad \widetilde{h}_{J,\ell}^{\prime} := \sum_{t \in I_{2,\ell}} 
\left\|\Psi_{J}(X_{t}^{\prime}) - \bar{\Psi}_{0}^{(M)}(X_{t}^{\prime})\right\|_{\mathcal{H}}^{2}. \nonumber
\end{align}

Then a similar argument to derive \eqref{temp1} yields
\begin{align}
\mathbb{E} \left[ \sum_{\ell=0}^{\mu_T - 1} \widetilde{h}_{J,\ell}^\prime \right]
\leq& (1+\varepsilon) \mathbb{E} \left[ \sum_{\ell=0}^{\mu_T - 1} \widetilde{h}_{J,\ell} \right]
+ \frac{21(1+\varepsilon)^2}{\varepsilon} F^2 a_T \log \mathcal{N}_{\delta} \nonumber\\
&+ 4(2+\varepsilon) F^2 a_T \mu_T \beta(a_T). \label{temp2}
\end{align}

Now, note that
\begin{align*}
R\left(\Psi_{J}, \bar{\Psi}_{0}^{(M)}\right) &
= \frac{1}{T} \mathbb{E} \left[
\sum_{\ell = 0}^{\mu_{T} - 1} 
\widetilde{g}_{J,\ell}^{\prime} + \sum_{\ell = 0}^{\mu_{T} - 1}
 \widetilde{h}_{J,\ell}^{\prime}
+ \sum_{t = 2a_T \mu_T + 1}^{T} 
\left\|\Psi_{J}(X_{t}^{\prime}) - \bar{\Psi}_{0}^{(M)}(X_t^\prime)\right\|_{\mathcal{H}}^{2}
\right], \\
\widehat{R}\left(\Psi_{J},\bar{\Psi}_{0}^{(M)}\right) &= \frac{1}{T} \mathbb{E} \left[
\sum_{\ell=0}^{\mu_T - 1} \widetilde{g}_{J,\ell} 
+ \sum_{\ell=0}^{\mu_T - 1} \widetilde{h}_{J,\ell}
+ \sum_{t=2a_T \mu_T + 1}^{T} 
\left\|\Psi_{J}(X_t) - \bar{\Psi}_{0}^{(M)}(X_{t})\right\|_{\mathcal{H}}^{2}. \nonumber
\right].
\end{align*}

Together with \eqref{temp1} and \eqref{temp2}, we have
\begin{align*}
R\left(\Psi_J, \bar{\Psi}_{0}^{(M)}\right) &\leq \frac{1}{T} \left\{ (1+\varepsilon) \mathbb{E} \left[
\sum_{\ell = 0}^{\mu_{T} - 1} 
\widetilde{g}_{J,\ell} + \sum_{\ell = 0}^{\mu_T - 1} \widetilde{h}_{J,\ell}
\right]
+ \mathbb{E} \left[ \sum_{t=2a_T \mu_{T} + 1}^{T} 
\left\|\Psi_{J}(X_t^\prime) - \bar{\Psi}_{0}^{(M)}(X_{t}^{\prime})\right\|_{\mathcal{H}}^{2} \right] \right\} \\
&\quad + \frac{2}{T} \left(
\frac{21(1+\varepsilon)^2}{\varepsilon} F^2 a_T \log \mathcal{N}_{\delta} + 4(2+\varepsilon) F^2 a_T \mu_T \beta(a_T)
\right) \\
&\leq (1+\varepsilon) \widehat{R}\left(\Psi_{J}, \bar{\Psi}_{0}^{(M)}\right) + \frac{21(1+\varepsilon)^2}{\varepsilon} \frac{F^2 \log \mathcal{N}_{\delta}}{\mu_T}
+ 4(2+\varepsilon) F^2 \beta(a_T) \\
&\quad + \left\lvert \frac{1}{T} \mathbb{E} \left[
\sum_{t=2a_T \mu_T + 1}^{T} \left\{
\left\|\Psi_{J}(X_{t}^{\prime}) - \bar{\Psi}_{0}^{(M)}(X_{t}^{\prime})\right\|_{\mathcal{H}}^{2} 
- 
\left\|\Psi_{J}(X_{t}) - \bar{\Psi}_{0}^{(M)}(X_t)\right\|_{\mathcal{H}}^{2}
\right\} \right] \right\rvert.\nonumber
\end{align*}
Since
\begin{align}
&\left\lvert \left\|\Psi_{J}(X_{t}) - \bar{\Psi}_{0}^{(M)}(X_{t})
\right\|_{\mathcal{H}}^2 - \left\|\Psi_{J}(X_{t}^{\prime}) - \bar{\Psi}_{0}^{(M)}(X_{t}^{\prime})\right\|_{\mathcal{H}}^2\right\rvert \leq 4F^2, \nonumber\\
&\left\lvert \left\| \widehat{\Psi}_{T}(x) - \bar{\Psi}_{0}^{(M)}(x)\right\|_{\mathcal{H}}^{2} 
- 
\left\|\Psi_{J}(x) - \bar{\Psi}_{0}^{(M)}(x)\right\|_{\mathcal{H}}^{2}\right\rvert \leq 4F \delta,\quad x \in \mathcal{H}, \label{lemmac1_1}
\end{align}
we obtain
\begin{align}
R\left(\widehat{\Psi}_{T}, \bar{\Psi}_{0}^{(M)}\right) &\leq 4F \delta + (1+\varepsilon) \left\{
\widehat{R}\left(\widehat{\Psi}_T, \bar{\Psi}_{0}^{(M)}\right) + 4F \delta
\right\}
+ \frac{21(1+\varepsilon)^2}{\varepsilon} \frac{F^2 \log \mathcal{N}_{\delta}}{\mu_T} \nonumber\\
&\quad + 4(2+\varepsilon) F^2 \beta(a_T)
+ \frac{4F^2 \cdot 2a_T}{T} \notag \\
&\leq (1+\varepsilon) \widehat{R}\left(\widehat{\Psi}_{T}, \bar{\Psi}_{0}^{(M)}\right) + \frac{21(1+\varepsilon)^2}{\varepsilon} \frac{F^2 \log \mathcal{N}_{\delta}}{\mu_T}
+ \frac{4F^2}{\mu_T} \nonumber\\
&\quad + 4(2+\varepsilon) F^2 \beta(a_T) + 4(2+\varepsilon) F \delta. \nonumber
\end{align}
%Here, we obtain the desired result.
\end{proof}

We now prove Lemma \ref{lemmac2}
\begin{proof}[Proof of Lemma \ref{lemmac2}]
Let $\delta$ be any positive number and
$\{\Psi_{1},\dots,\Psi_{\mathcal{N}_{\delta}}\}$ be a $\delta$ covering of $\mathcal{F}$ with respect to $\left\| \cdot \right\|_{\infty}$.

\newcounter{step}
\stepcounter{step} % Reset for each proof
\noindent\textbf{Step \thestep.}
First, we show that for any $\Psi \in \mathcal{F}$:
\begin{align}
\widehat{R}\left(\widehat{\Psi}_{T},\bar{\Psi}_{0}^{(M)}\right) &= \Delta(\widehat{\Psi}_{T},\Psi) + R\left(\Psi,\bar{\Psi}_{0}^{(M)}\right) + 2 \mathbb{E}\left[\frac{1}{T}\sum_{t = 1}^{T}\left\langle \widehat{\Psi}_{T}(X_{t}) - \bar{\Psi}_{0}^{(M)}(X_{t}), \xi_{t}\right\rangle_{\mathcal{H}}\right]. \label{c2_0}
\end{align}
Indeed, we obtain that
\begin{align}
    \left\| X_{t+1} \right\|_{\mathcal{H}}^{2} - 
    \left\| \bar{\Psi}_{0}^{(M)}(X_{t})\right\|_{\mathcal{H}}^{2}
    =& \left\| X_{t+1} - \widehat{\Psi}_{T}(X_{t})\right\|_{\mathcal{H}}^{2} 
    - \left\|\widehat{\Psi}_{T}(X_{t}) - \bar{\Psi}_{0}^{(M)}(X_{t}) \right\|_{\mathcal{H}}^{2} \nonumber\\
    &+ 2 \left\langle \widehat{\Psi}_{T}(X_{t}) , \Psi_{0}(X_{t}) - \bar{\Psi}_{0}^{(M)}(X_{t}) + \xi_{t} \right\rangle_{\mathcal{H}}, \nonumber\\
    =& \left\| X_{t+1} - \widehat{\Psi}_{T}(X_{t})\right\|_{\mathcal{H}}^{2} 
    - \left\|\widehat{\Psi}_{T}(X_{t}) - \bar{\Psi}_{0}^{(M)}(X_{t}) \right\|_{\mathcal{H}}^{2} \nonumber\\
    &+ 2 \left\langle \widehat{\Psi}_{T}(X_{t}) , \xi_{t} \right\rangle_{\mathcal{H}}, \nonumber
\end{align}
where the second equality follows since
\begin{align}
    &\left\langle \widehat{\Psi}_{T}(X_{t}) , 
    \Psi_{0}(X_{t}) - \bar{\Psi}_{0}^{(M)}(X_{t}) + \xi_{t} 
    \right\rangle_{\mathcal{H}} \nonumber\\
    &= \int_{D} \left(\widehat{\Psi}_{T}(X_{t})\right)(u)
     \left(\Psi_{0}(X_{t})\bm{1}_{\{\left\| X_{t} \right\|_{\infty} > M\}}\right)(u)\,du
     + \left\langle \widehat{\Psi}_{T}(X_{t}) , \xi_{t} \right\rangle_{\mathcal{H}}, \nonumber
\end{align}
and the right-hand side equals zero, the latter being a consequence of Assumption 4.3.
We also obtain that for any $\Psi \in \mathcal{F}$
\begin{align}
    \left\| X_{t+1} \right\|_{\mathcal{H}}^{2} - \left\| \bar{\Psi}_{0}^{(M)}(X_{t})\right\|_{\mathcal{H}}^{2}
    =& \left\| X_{t+1} - \Psi(X_{t}) + \Psi(X_{t})\right\|_{\mathcal{H}}^{2} \nonumber\\
    &- \left\| \Psi(X_{t}) - \Psi(X_{t}) 
    + \bar{\Psi}_{0}^{(M)}(X_{t})\right\|_{\mathcal{H}}^{2} \nonumber\\ 
    =& \left\| X_{t+1} - \Psi(X_{t})\right\|_{\mathcal{H}}^{2} 
    - \left\|\Psi(X_{t}) - \bar{\Psi}_{0}^{(M)}(X_{t}) \right\|_{\mathcal{H}}^{2} \nonumber\\
    &+ 2\left\langle \Psi(X_{t}),\xi_{t} \right\rangle_{\mathcal{H}}. \nonumber
\end{align}
Thus, we obtain that
\begin{align}
 &\mathbb{E}\left[
    \frac{1}{T}\sum_{t = 1}^{T}\left\| X_{t+1} - \widehat{\Psi}_{T}(X_{t})
    \right\|_{\mathcal{H}}^{2} \right] - 
    \mathbb{E}\left[\frac{1}{T}\sum_{t = 1}^{T}
    \left\|\widehat{\Psi}_{T}(X_{t}) - \bar{\Psi}_{0}^{(M)}(X_{t}) \right\|_{\mathcal{H}}^{2} 
    \right] -2 \mathbb{E}\left[\frac{1}{T}\sum_{t = 1}^{T}\left\langle 
    \widehat{\Psi}_{T}(X_{t}), \xi_{t} \right\rangle_{\mathcal{H}}\right] \nonumber\\
 &= \mathbb{E}\left[\frac{1}{T}\sum_{t = 1}^{T}
 \left\| X_{t+1} - \Psi(X_{t})\right\|_{\mathcal{H}}^{2} \right]
  -  \mathbb{E}\left[\frac{1}{T}\sum_{t = 1}^{T}
  \left\|\Psi(X_{t}) - \bar{\Psi}_{0}^{(M)}(X_{t}) \right\|_{\mathcal{H}}^{2}  \right], \nonumber
\end{align}
which implies that
\begin{align}
&\widehat{R}\left(\widehat{\Psi}_{T},\bar{\Psi}_{0}^{(M)}\right) = \Delta (\widehat{\Psi}_{T},\Psi) + R\left(\Psi,\bar{\Psi}_{0}^{(M)}\right) + 2 \mathbb{E}\left[\frac{1}{T}\sum_{t = 1}^{T}\left\langle \widehat{\Psi}_{T}(X_{t}) - \bar{\Psi}_{0}^{(M)}(X_{t}), \xi_{t}\right\rangle_{\mathcal{H}}\right]. \nonumber
\end{align}

\stepcounter{step}
\noindent\textbf{Step \thestep.}
In this step, we will show
\begin{align}
&\mathbb{E}\left[\frac{1}{T}\sum_{t=1}^{T}\left \langle \Psi_{J}(X_{t})-\bar{\Psi}_{0}^{(M)}(X_t),
\xi_{t}\right \rangle_{\mathcal{H}}\right] \nonumber\\
&\leq 4K\left(\frac{(\log T)(\log\mathcal{N}_{\delta}+\tfrac12\log2)}{T}\right)^{1/2}
\left(\widehat R\left(\widehat{\Psi}_{T},\bar{\Psi}_{0}^{(M)}\right)+F\delta+\frac{4F^2}{T}\right)^{1/2}.
\label{c2_1}
\end{align}
For each $j=1,\dots,\mathcal{N}_{\delta}$, define
\begin{align}
A_{j}&:=\sum_{t=1}^{T}\left \langle \Psi_{j}(X_{t})- \bar{\Psi}_{0}^{(M)}(X_t),\xi_t\right\rangle_{\mathcal{H}},\nonumber\\
B_{j}&:=\left(\sum_{t=1}^{T}\left\{\left\langle \Psi_{j}(X_t) - \bar{\Psi}_{0}^{(M)}(X_t),\xi_{t} \right\rangle_{\mathcal{H}}^{2} + \left\langle Q\left(\Psi_{j}(X_{t}) - \bar{\Psi}_{0}^{(M)}(X_{t})\right),\Psi_{j}(X_{t}) - \bar{\Psi}_{0}^{(M)}(X_{t})\right\rangle_{\mathcal{H}}\right\}\right)^{1/2}, \nonumber
\end{align}
and
\begin{align}
\xi_j:=\frac{A_j}{2\sqrt{B_j^2+\mathbb{E}\bigl[B_J^2\bigr]}}.\nonumber
\end{align}
If  the denominator is zero, then $\xi_{j}:=0$. By the Cauchy - Schwarz inequality,
\begin{align}
\mathbb{E}\left[\frac{1}{T}\sum_{t=1}^{T}\left\langle \Psi_{J}(X_t) - \bar{\Psi}_{0}^{(M)}(X_t),\xi_{t} \right\rangle_{\mathcal{H}}\right]
&\leq \frac{2}{T} \mathbb{E}\left[\lvert\xi_{J}\rvert 
\left(B_{J}^{2} + \mathbb{E}[B_{J}^{2}]\right)^{1/2}\right] \nonumber\\
&\leq \frac{2}{T} \mathbb{E}\left[\xi_{J}^2\right]^{1/2}(2 \mathbb{E}[B_{J}^{2}])^{1/2}.\label{c2_2}
\end{align}
If $\mathbb{E}[B_J^2]>0$, then, by the fact $X_{t+1}$ is $\mathcal{F}_{t}$-measurable for $t = 1,\dots,T$ and from Lemma~\ref{lemmae1} in the supplementary material, and
Lemma~\ref{lemmae2} therein applied with $y=\sqrt{\mathbb{E}[B_J^2]}$,
\begin{align}
\mathbb{E}\left[\frac{\sqrt{\mathbb{E}[B_J^2]}}{\sqrt{B_{j}^{2}+\mathbb{E}[B_{J}^{2}]}}
\exp\left(2\xi_{j}^{2}\right)\right] \leq 1. \nonumber
\end{align}
Hence
\begin{align}
\mathbb{E}\left[\frac{\sqrt{\mathbb{E}[B_{J}^{2}]}}{\sqrt{B_{J}^{2}+\mathbb{E}[B_{J}^{2}]}}
\exp \left(2\xi_{J}^{2}\right)\right]
\le\mathbb{E}\left[\max_{1\le j\le\mathcal{N}_{\delta}}
\frac{\sqrt{\mathbb{E}[B_j^2]}}{\sqrt{B_j^2+\mathbb{E}[B_j^2]}}
\exp\bigl(2\xi_j^2\bigr)\right]\leq \mathcal{N}_{\delta}. \nonumber
\end{align}
By the Cauchy - Schwarz inequality,
\begin{align}
&\mathbb{E}\left[\exp(\xi_{J}^{2})\right]
\leq
\sqrt{
  \mathbb{E}\left[\frac{\sqrt{\mathbb{E}[B_{J}^{2}]}}{
    \sqrt{B_{J}^{2}+\mathbb{E}[B_{J}^2]}}
  \exp(2\xi_{J}^{2})\right] 
  \mathbb{E}\left[\frac{\sqrt{B_{J}^{2}+\mathbb{E}[B_{J}^{2}]}}{\sqrt{\mathbb{E}[B_{J}^{2}]}}\right]
}
\leq
\sqrt{\mathcal{N}_{\delta}
\mathbb{E}\left[\frac{\sqrt{B_{J}^{2}+\mathbb{E}[B_{J}^{2}]}}{\sqrt{\mathbb{E}[B_{J}^{2}]}}\right]
}.\nonumber
\end{align}
Since
\begin{align}
\mathbb{E}\left[\frac{\sqrt{B_{J}^{2}+\mathbb{E}[B_{J}^{2}]}}{\sqrt{\mathbb{E}[B_{J}^{2}]}}\right]
\leq
\sqrt{\mathbb{E}\left[\frac{B_{J}^{2}+\mathbb{E}[B_{J}^{2}]}{\mathbb{E}[B_{J}^{2}]}\right]}
\leq \sqrt{2},\nonumber
\end{align}
we conclude
\begin{align}
&\mathbb{E}\left[\exp(\xi_J^2)\right]\leq 2^{1/4}\sqrt{\mathcal{N}_{\delta}}. \nonumber
\end{align}
This bound also holds if $\mathbb{E}[B_j^2]=0$ (then $\xi_j=0$). Finally, by Jensen's inequality,
\begin{equation}
\mathbb{E}\left[\xi_{J}^{2}\right]
\leq \log\mathbb{E}\left[\exp(\xi_{J}^{2})\right]
\leq \frac{1}{2}\log\mathcal{N}_{\delta}+\frac{1}{4}\log2.
\label{c2_3}
\end{equation}
%----------- Previous part -----------%
Using Assumption 2.1 and \eqref{lemmac1_1}, we have

%----------- (C.13) -----------%
\begin{align}
\mathbb{E}[B_{J}^{2}]
\leq& \mathbb{E}\left[\sum_{t=1}^{T}
  \left\lvert\left \langle \Psi_{J}(X_{t})
  -
  \bar{\Psi}_{0}^{(M)}(X_{t}),\xi_{t} \right \rangle_{\mathcal{H}}\right\rvert^{2} \right] \nonumber\\
  &+\mathbb{E}\left[
    \sum_{t = 1}^{T}\left\lvert\left \langle \Psi_{J}(X_{t})
  -
  \bar{\Psi}_{0}^{(M)}(X_{t}),Q\left(\Psi_{J}(X_{t})
  -
  \bar{\Psi}_{0}^{(M)}(X_{t})\right) \right \rangle_{\mathcal{H}}\right\rvert^{2} \right]\nonumber\\
\leq&  \mathbb{E}\left[\sum_{t=1}^{T}
  \left\lvert\left \langle \widehat{\Psi}_{T}(X_{t})
  -
  \bar{\Psi}_{0}^{(M)}(X_{t}),\xi_{t} \right \rangle_{\mathcal{H}}\right\rvert^{2}\right]\nonumber\\
  &+
    \mathbb{E}\left[\sum_{t = 1}^{T}\left\lvert\left \langle \widehat{\Psi}_{T}(X_{t})
  -
  \bar{\Psi}_{0}^{(M)}(X_{t}),Q\left(  \widehat{\Psi}_{T}(X_{t})
  -
  \bar{\Psi}_{0}^{(M)}(X_{t})\right) \right \rangle_{\mathcal{H}}\right\rvert^{2} \right]\nonumber\\
&+ 8F^{2}\delta T \left\| Q\right\|_{\mathcal{L}}\nonumber\\
\leq&\mathbb{E}\left[
  \sum_{t=1}^{T}\left\| \widehat{\Psi}_{T}(X_t) - \bar{\Psi}_{0}^{(M)}(X_t)\right\|_{\mathcal{H}}^{2}\left\| \xi_{t} \right\|_{\mathcal{H}}^{2}\right]
  + \left\| Q \right\|_{\mathcal{L}} \widehat R\left(\widehat{\Psi}_{T},\bar{\Psi}_{0}^{(M)}\right)
  + 8F^{2}\delta T \left\| Q\right\|_{\mathcal{L}}.\label{c2_4}
\end{align}

%----------- (C.14) -----------%
Decompose the first term as
\begin{align}
&\mathbb{E}\left[\sum_{t=1}^{T}\left\|\widehat{\Psi}_{T}(X_t) - \bar{\Psi}_{0}^{(M)}(X_{t})\right\|_{\mathcal{H}}^{2} \left\| \xi_{t} \right\|_{\mathcal{H}}^{2}\right]\nonumber\\
&\leq \
\mathbb{E}
\left[
  \sum_{t=1}^{T}
  \left\|\widehat{\Psi}_{T}(X_t) - \bar{\Psi}_{0}^{(M)}(X_t)\right\|_{\mathcal{H}}^{2}
   \left\| \xi_{t} \right\|_{\mathcal{H}}^{2}\bm{1}_{\left\{\left\| \xi_{t} \right\|_{\mathcal{H}} \leq \sqrt{2 (K^{'})^{2} \log T} \right\}}\right]
   \nonumber\\
&\hspace{11pt}+ \mathbb{E}\left[\sum_{t=1}^{T}
  \left\|\widehat{\Psi}_{T}(X_t) - \bar{\Psi}_{0}^{(M)}(X_t)\right\|_{\mathcal{H}}^{2}
   \left\| \xi_{t} \right\|_{\mathcal{H}}^{2}\bm{1}_{\left\{\left\| \xi_{t} \right\|_{\mathcal{H}} > \sqrt{2 (K^{'})^{2} \log T} \right\}}\right]\nonumber\\
&\leq 2 (K^{'})^{2} T \log T \widehat{R}\left(\widehat{\Psi}_{T},\bar{\Psi}_{0}^{(M)}\right)
   +4F^{2}\left(K^{'}\right)^{2}T
    \mathbb{E}\left[\frac{\left\| \xi_{t} \right\|^{2}}{2(K^{'})^{2}}
    \mathbf{1}_{\{\left\|\xi_{t}\right\| > \sqrt{2 (K^{'})^{2} \log T}\}}\right].\label{c2_6}
\end{align}

%----------- (C.15) -----------%
Since $K'>0$ satisfies (4.2) in the main paper,
\begin{align}
\mathbb{E}\left[\frac{\left\| \xi_{t} \right\|_{\mathcal{H}}^{2}}{2(K^{'})^{2}}\mathbf{1}_{\left\{\left\|\xi_{t}\right\|_{\mathcal{H}} > \sqrt{2 (K^{'})^{2}\log T} \right\}}\right]
\leq& 
\mathbb{E}\left[\exp\left(\frac{\left\| \xi_{t} \right\|_{\mathcal{H}}^{2}}{2(K^{'})^{2}}\right)
\bm{1}_{\left\{\left\|\xi_{t}\right\|_{\mathcal{H}} > \sqrt{2 (K^{'})^{2} \log T}\right\}}\right] \nonumber\\
\leq& 
\mathbb{E}\left[
  \exp\left\{\frac{\left\| \xi_{t}\right\|_{\mathcal{H}}^{2}}
  { (K^{'})^{2}}\right\}
  \exp\left\{-\frac{\left\|\xi_{t}\right\|_{\mathcal{H}}^{2}}
  {2 (K^{'})^{2}}\right\}
  \bm{1}_{\left\{\left\|\xi_{t}\right\|_{\mathcal{H}} > \sqrt{2 (K^{'})^{2} \log T}\right\}}
  \right] \nonumber\\
\leq& 
\mathbb{E}\left[  \exp\left\{\frac{\left\| \xi_{t}\right\|_{\mathcal{H}}^{2}}
  { (K^{'})^{2}}\right\}
  \exp\{-\log T\}\right]
\leq \frac{2}{T}.
\label{c2_5}
\end{align}

%----------- (C.16) -----------%
Combining inequalities \eqref{c2_4}--\eqref{c2_5} with the assumption $\log T\ge 2$ and the definition of $K$, we obtain
\begin{align}
\mathbb{E}[B_{J}^{2}]
&\leq 2(K^{'})^{2}T\log T \widehat{R}\left(\widehat{\Psi}_{T},\bar{\Psi}_{0}^{(M)}\right) + 8F^{2}(K^{'})^{2} + \left\| Q \right\|_{\mathcal{L}}\widehat{R}\left(\widehat{\Psi}_{T},\bar{\Psi}_{0}^{(M)}\right) + 8F^{2}\delta T \left\| Q \right\|_{\mathcal{L}} \nonumber\\
&\leq 4K^{2}T\log T \widehat{R}\left(\widehat{\Psi}_{T},\bar{\Psi}_{0}^{(M)}\right) + 16F^{2}K^{2} + 8F\delta T K^{2} \nonumber\\
&\leq 4K^{2}T\log T \left(\widehat{R}\left(\widehat{\Psi}_{T},\bar{\Psi}_{0}^{(M)}\right) + \frac{4F^{2}}{T} + F\delta \right). 
\label{c2_6}
\end{align}
Combining \eqref{c2_2}, \eqref{c2_3}, and \eqref{c2_6}, we obtain the inequality \eqref{c2_1}.

%----------- Step 3 -----------%
\stepcounter{step}
\noindent\textbf{Step \thestep.}
In this step, we complete the proof. By \eqref{c2_1} and the AM - GM inequality,
\begin{align}
\mathbb{E}\left[
  \frac{1}{T}\sum_{t=1}^{T}\left\langle \Psi_{J}(X_{t}) - \bar{\Psi}_{0}^{(M)}(X_{t}), \xi_{t} \right\rangle_{\mathcal{H}}\right]
\le \frac{\varepsilon}{2} \widehat{R}\left(\widehat{\Psi}_{T} , \bar{\Psi}_{0}^{(M)}\right)
  +\gamma_\varepsilon,\nonumber
\end{align}
where
\[
\gamma_\varepsilon
=\frac{16 K^{2} (\log T)\left(2\log\mathcal{N}_{\delta}+\log2\right)}{\varepsilon\,T}
+\frac{F\delta\,\varepsilon}{2}
+\frac{2F^2\varepsilon}{T}.
\]
Combining this with \eqref{c2_0}, we have for any $\Psi \in\mathcal F$,
\begin{align}
\widehat{R}\left(\widehat{\Psi}_{T}, \bar{\Psi}_{0}^{(M)}\right)
&= \Delta(\widehat{\Psi}_{T},\Psi) + \widehat{R}\left(\Psi, \bar{\Psi}_{0}^{(M)}\right)
  +2\,\mathbb E\left[\frac{1}{T}\sum_{t = 1}^{T}\left\langle \widehat{\Psi}_{T}(X_t)
  - \Psi_{J}(X_t),\xi_{t}\right\rangle_{\mathcal{H}}\right] \nonumber\\
&\quad +2\mathbb{E}\left[\frac{1}{T}\sum_{t=1}^{T}
\left\langle \Psi_{J}(X_{t}) - \bar{\Psi}_{0}^{(M)}(X_{t}),\xi_{t}\right\rangle_{\mathcal{H}}\right] \nonumber\\
&\leq \Delta\left(\widehat{\Psi}_{T}\right)+\widehat R\left(\Psi, \bar{\Psi}_{0}^{(M)}\right)
  + 2 \delta \sqrt{\left\| Q \right\|_{\mathbb{T}}}
  +\varepsilon\,\widehat{R}\left(\widehat{\Psi}_{T},\bar{\Psi}_{0}^{(M)}\right)
  +2\gamma_\varepsilon \nonumber\\
&\leq \Delta(\widehat{\Psi}_{T}) + \widehat{R}\left(\Psi, \bar{\Psi}_{0}^{(M)}\right)
  + 2 \delta K
  + \varepsilon\,\widehat R\left(\widehat{\Psi}_{T},\bar{\Psi}_{0}^{(M)}\right)
  + 2\gamma_\varepsilon. \nonumber
\end{align}
Since $\widehat R\left(\Psi,\bar{\Psi}_{0}^{(M)}\right) = R\left(\Psi, \bar{\Psi}_{0}^{(M)}\right)$, it follows that
\begin{align}
(1-\varepsilon) \widehat{R}\left(\widehat{\Psi}_{T},\bar{\Psi}_{0}^{(M)}\right)
&\leq \Delta(\widehat{\Psi}_{T}) + R\left(\Psi,\bar{\Psi}_{0}^{(M)}\right)+\gamma_\varepsilon^{'},\nonumber
\end{align}
where $\gamma_{\varepsilon}^{'} := 2\delta K + 2\gamma_{\varepsilon}$.
Taking the infimum over $\Psi \in \mathcal{F}$, we conclude that 
\begin{align}
 \widehat{R}\left(\widehat{\Psi}_{T},\bar{\Psi}_{0}^{(M)}\right)&\leq \frac{1}{1 - \varepsilon}\Delta(\widehat{\Psi}_{T}) + \frac{1}{1 - \varepsilon}\inf_{\Psi \in \mathcal{F}}R\left(\Psi,\bar{\Psi}_{0}^{(M)}\right) + \frac{1}{1 - \varepsilon}\gamma_{\varepsilon}^{'}. \nonumber
\end{align}
Noting that $F \geq 1$, we obtain the desired result.
\end{proof}

\subsection{Proof of Lemma A.2}
In this section, we provide the proof of Lemma A.2. 
% Let $\mathcal{H}$ be a separable Hilbert space. For any $x,y \in \mathcal{H}$, we define the operator $x \otimes y : \mathcal{H} \to \mathcal{H}$ by
% $(x \otimes y)(f) = x \langle y,f \rangle_{\mathcal{H}}, f \in \mathcal{H}$.
We add some notation needed to prove Lemma A.2.
For any $f, g \in \mathcal{H}$, define the operator $f \otimes g : \mathcal{H} \to \mathcal{H}$ by 
$
(f \otimes g)(h) = f \langle g,h \rangle_{\mathcal{H}},\,h \in \mathcal{H}
$.
\begin{proof}[Proof of Lemma A.2]
Since $\mathcal{H}$ is a separable Hilbert space, there exist eigenvalues $\{\lambda_i\}_{i=1}^{\infty}$ and eigenfunctions $\{e_i\}_{i=1}^{\infty}$ of $Q$ such that
\[
Q = \sum_{i=1}^{\infty}\lambda_i \,e_i\otimes e_i.
\]
By the discussion in Example 4.2 of Section 2, Chapter 4 of \citet{lifshits2012gaussian}, for any $h\in \mathcal{H}_{k}$,
\begin{align}
    I^{-1}h = \sum_{i=1}^{\infty}\frac{\langle e_i,h\rangle_{\mathcal{H}}}{\lambda_i}\,e_i. \label{form_of_I_inv}
\end{align}
Hence, for $x \in \mathcal{H}$,
\begin{align*}
    (I^{-1}h)(x)
    &= \sum_{i=1}^{\infty}\frac{\langle e_i,Qg\rangle_{\mathcal{H}}}{\lambda_i}\langle e_i,x\rangle_{\mathcal{H}}
     = \sum_{i=1}^{\infty}\langle e_i,g\rangle_{\mathcal{H}}\langle e_i,x\rangle_{\mathcal{H}}
     = \langle g,x\rangle_{\mathcal{H}},
\end{align*}
which proves (A.2). Moreover, from \eqref{form_of_I_inv},
\begin{align*}
    \|h\|_{\mathcal{H}_{k}}^{2}
    &= \sum_{i=1}^{\infty}\frac{\lvert \langle h,e_i\rangle_{\mathcal{H}}\rvert^{2}}{\lambda_i}
     = \sum_{i=1}^{\infty}\frac{|\langle Qg,e_i\rangle_{\mathcal{H}}|^{2}}{\lambda_i}
     = \sum_{i=1}^{\infty}\frac{|\langle g,Qe_i\rangle_{\mathcal{H}}|^{2}}{\lambda_i}
     = \sum_{i=1}^{\infty}\lambda_i \,|\langle g,e_i\rangle_{\mathcal{H}}|^{2}
     = \langle Qg,g\rangle_{\mathcal{H}},
\end{align*}
establishing (A.3).
\end{proof}

\section{Technical lemmas}
In this section, we collect several technical lemmas used in the proofs.
Let $\mathcal{A}$ and $\mathcal{B}$ be two $\sigma$-fields of a probability space $(\Omega,\mathcal{T},\mathbb{P})$. 
The $\beta$-mixing coefficient between $\mathcal{A}$ and $\mathcal{B}$ is defined by
\[
\beta(\mathcal{A},\mathcal{B})
= \frac{1}{2} \sup \left\{ \sum_{i \in I}\sum_{j \in J} 
\left| \mathbb{P}(A_i \cap B_j) - \mathbb{P}(A_i)\,\mathbb{P}(B_j) \right| \right\},
\]
where the maximum is taken over all finite partitions 
$\{A_i\}_{i \in I} \subset \mathcal{A}$ and $\{B_j\}_{j \in J} \subset \mathcal{B}$ of $\Omega$. 
Moreover, for two probability measures $\mu$ and $\nu$ on a measurable space
$(\Omega,\mathcal{I})$, the total variation distance between them is defined by
\[
\|\nu-\mu\|_{\mathrm{TV}}
:=
\sup_{A\in\mathcal{I}} |\nu(A)-\mu(A)|.
\]
\begin{Lemma}[Lemma 5.1 in \citet{rio2013inequalities}]\label{rio}
Let $\mathcal{A}$ be a $\sigma$-field in a probability space $(\Omega, \mathcal{T}, \mathbb{P})$ and $X$ be a random variable with values in some Polish space. Let $U$ be a random variable with uniform distribution over $[0,1]$, independent of the $\sigma$-field generated by $X$ and $\mathcal{A}$. Then there exists a random variable $X^{\prime}$, with the same law as $X$, independent of $X$, such that 
\[
\mathbb{P}(X \neq X^{\prime}) = \beta(\mathcal{A}, \sigma(X)),
\]
where $\sigma(X)$ denotes the $\sigma$-field generated by $X$. Furthermore, $X^{\prime}$ is measurable with respect to the $\sigma$-field generated by $\mathcal{A}$ and $(X,U)$.
\end{Lemma}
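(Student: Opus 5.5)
The plan is to prove this lemma (Berbee's coupling lemma, in Rio's form) by a conditional maximal coupling, built measurably from a regular conditional distribution and the auxiliary uniform $U$. Throughout, the independence that matters is that $X^{\prime}$ is independent of $\mathcal{A}$; that is how the lemma is used in Step 1 of the proof of Lemma \ref{lemmac1}. Let $S$ be the Polish space and $\nu$ the law of $X$. Since $S$ is Polish, there is a regular conditional distribution $\mu_{\omega}(\cdot)=\mathbb{P}(X\in\cdot\mid\mathcal{A})(\omega)$ that is $\mathcal{A}$-measurable in $\omega$.

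\textbf{Step 1: identify the target probability.} Because $\mathcal{B}(S)$ is countably generated, I would first show
\[
\beta(\mathcal{A},\sigma(X))=\mathbb{E}\bigl[\|\mu_{\omega}-\nu\|_{\mathrm{TV}}\bigr].
\]
For the upper bound, fix finite partitions $\{A_i\}\subset\mathcal{A}$ and $\{X\in B_j\}$. Then $\sum_{i,j}|\mathbb{P}(A_i\cap\{X\in B_j\})-\mathbb{P}(A_i)\nu(B_j)|\le\mathbb{E}\sum_j|\mu_\omega(B_j)-\nu(B_j)|\le 2\,\mathbb{E}\|\mu_\omega-\nu\|_{\mathrm{TV}}$. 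For the lower bound, approximate the set $\{\mu_\omega>\nu\}$ (taken in the Hahn decomposition) by sets from a countable generating algebra, and partition $\Omega$ according to which finite pattern is active. Measurability of $\omega\mapsto\|\mu_\omega-\nu\|_{\mathrm{TV}}$ follows by taking the supremum over that countable algebra.

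\textbf{Step 2: build the coupling.} Use the binary digits of $U$ to produce two independent uniforms $U_1,U_2$, both independent of $(X,\mathcal{A})$. Let $\lambda_\omega=(\mu_\omega+\nu)/2$ and let $p_\omega,q$ be the densities of $\mu_\omega,\nu$ with respect to $\lambda_\omega$. These can be chosen jointly measurable in $(\omega,x)$ via martingale differentiation along a countable generating filtration. The common part is $m_\omega=\min(p_\omega,q)\lambda_\omega$, and its mass is $1-\|\mu_\omega-\nu\|_{\mathrm{TV}}$.

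Define the coupling as follows:
\begin{itemize}
\item If $U_1\le m_\omega(X)/p_\omega(X)$, with the ratio read as a density ratio, set $X^{\prime}=X$.
\item Otherwise, set $X^{\prime}=G_\omega(U_2)$. Here $G_\omega$ is a measurable sampler of the residual law $(\nu-m_\omega)/\|\mu_\omega-\nu\|_{\mathrm{TV}}$, obtained by transporting to $[0,1]$ through a Borel isomorphism and applying the generalized inverse distribution function.
\end{itemize}
By construction $X^{\prime}$ is a measurable function of $(\omega\text{ via }\mathcal{A},X,U)$, which gives the last claim of the lemma.

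\textbf{Step 3: verify the properties.} Conditionally on $\mathcal{A}$, the law of $X^{\prime}$ is $m_\omega+(\nu-m_\omega)=\nu$. This law does not depend on $\omega$, so $X^{\prime}$ is independent of $\mathcal{A}$ and has law $\nu$. Next,
\[
\mathbb{P}(X^{\prime}=X\mid\mathcal{A})\ge m_\omega(S)=1-\|\mu_\omega-\nu\|_{\mathrm{TV}}.
\]
Equality holds because, on the rejection event, $X$ is distributed as $\mu_\omega-m_\omega$ and $X^{\prime}$ as $\nu-m_\omega$; these two measures are mutually singular and $U_2$ is independent of $X$, so $X^{\prime}\neq X$ almost surely there. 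Taking expectations and using Step 1 gives $\mathbb{P}(X\ne X^{\prime})=\beta(\mathcal{A},\sigma(X))$.

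\textbf{Main obstacle.} I expect the hardest part to be the joint measurability in $(\omega,x,u)$ of the densities and of the residual sampler $G_\omega$. I would handle it by reducing everything to $[0,1]$ through a Borel isomorphism and using explicit distribution-function formulas, which are Borel in all their arguments.
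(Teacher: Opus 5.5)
The paper does not prove this lemma. It is Berbee's coupling lemma, quoted from Rio and used as a black box in Step~1 of the proof of Lemma~\ref{lemmac1}, so there is no in-paper argument to compare against. Your conditional maximal-coupling construction is the standard proof, and it is correct.

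The identity $\beta(\mathcal{A},\sigma(X))=\mathbb{E}\|\mu_\omega-\nu\|_{\mathrm{TV}}$ is what gives equality rather than an inequality. Your lower-bound sketch works once made concrete:
\begin{itemize}
\item Fix $C_1,\dots,C_n$ from the countable generating algebra.
\item Let $A_k$ be the $\mathcal{A}$-measurable event that $k$ is the first index maximizing $\mu_\omega(C_k)-\nu(C_k)$.
\item Partition $S$ by the atoms of the algebra generated by $C_1,\dots,C_n$.
\item Write $\mu_\omega(C_k)-\nu(C_k)$ once as a sum over atoms inside $C_k$, and once as minus a sum over atoms inside $C_k^{c}$.
\end{itemize}
This yields $\mathbb{E}\max_{k\le n}(\mu_\omega(C_k)-\nu(C_k))\le\beta(\mathcal{A},\sigma(X))$, and monotone convergence finishes. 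The acceptance/residual scheme then gives conditional law $\nu$ given $\mathcal{A}$, and disagreement probability exactly $\|\mu_\omega-\nu\|_{\mathrm{TV}}$. Compared with the bare citation, your route is self-contained, and the final measurability claim is visible directly from the construction.

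You should say explicitly that you are proving a corrected statement; the change is forced, not a matter of emphasis. As printed, with $X'$ ``independent of $X$'', the lemma is false. Take $\mathcal{A}$ trivial: then $\beta(\mathcal{A},\sigma(X))=0$, so $X'=X$ almost surely. But then $X'$ can be independent of $X$ only if $X$ is almost surely constant. Independence from $\mathcal{A}$ is the standard form of Berbee's lemma, and it is exactly what the paper needs ($g_\ell$ independent of $\mathcal{A}(0,\ell-1)$).

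Three minor points.
\begin{itemize}
\item The density of $\nu$ should carry the index, $q_\omega$, because $\lambda_\omega$ depends on $\omega$.
\item On the rejection event you do not need independence of $U_2$ and $X$. There, almost surely $X\in\{p_\omega>q_\omega\}$ and $X'\in\{q_\omega>p_\omega\}$, so $X\neq X'$ follows from disjointness alone.
\item You can remove the measurability obstacle you anticipate by applying a Borel isomorphism of $S$ onto a Borel subset of $[0,1]$ at the outset. This changes neither $\sigma(X)$, nor equality in law, nor the event $\{X\neq X'\}$. Densities can then be taken as limits of ratios over dyadic intervals, and the residual sampler is a conditional quantile function.
\end{itemize}
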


\begin{Lemma}[Lemma E.3 in \citet{kurisu2024adaptive}]\label{lemmae1}
Let $\{d_i\}$ be a martingale difference sequence with respect to a filtration $\{\mathcal{F}_i\}$. Assume $\mathbb{E}[d_i^2] < \infty$ for all $i$. Then
\[
\mathbb{E}\left[\exp\left(\lambda \sum_{i=1}^n d_i - \frac{\lambda^2}{2}\left(\sum_{i=1}^n d_i^2 + \sum_{i=1}^n \mathbb{E}[d_i^2 \mid \mathcal{F}_{i-1}]\right)\right)\right] \leq 1
\]
for all $n \geq 1$ and $\lambda \in \mathbb{R}$.
\end{Lemma}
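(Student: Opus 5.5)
The plan is the standard exponential-supermartingale argument. Write $S_n:=\sum_{i=1}^n d_i$ and $V_n:=\sum_{i=1}^n d_i^2+\sum_{i=1}^n \mathbb{E}[d_i^2\mid\mathcal{F}_{i-1}]$, and set
\[
M_n:=\exp\bigl(\lambda S_n-\tfrac{\lambda^2}{2}V_n\bigr),\qquad M_0:=1 .
\]
I will show $\mathbb{E}[M_n\mid\mathcal{F}_{n-1}]\le M_{n-1}$ almost surely; taking expectations and iterating then gives $\mathbb{E}[M_n]\le \mathbb{E}[M_0]=1$. Because $M_n=M_{n-1}\exp\bigl(\lambda d_n-\tfrac{\lambda^2}{2}d_n^2\bigr)\exp\bigl(-\tfrac{\lambda^2}{2}\mathbb{E}[d_n^2\mid\mathcal{F}_{n-1}]\bigr)$, and the first and last factors are $\mathcal{F}_{n-1}$-measurable, the whole proof reduces to the one-step conditional bound
\[
\mathbb{E}\Bigl[\exp\bigl(\lambda d_n-\tfrac{\lambda^2}{2}d_n^2\bigr)\Bigm|\mathcal{F}_{n-1}\Bigr]\le \exp\Bigl(\tfrac{\lambda^2}{2}\mathbb{E}[d_n^2\mid\mathcal{F}_{n-1}]\Bigr).
\]
Nonnegativity of every factor lets conditional expectations be pulled through without any integrability issue beyond finiteness of the conditional second moment, which is supplied by $\mathbb{E}[d_i^2]<\infty$.

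For the one-step bound I use the elementary inequality
\[
e^{x-x^2/2}\le 1+x+\tfrac{x^2}{2}\qquad\text{for all } x\in\mathbb{R}. \tag{$*$}
\]
Granting $(*)$, put $x=\lambda d_n$ and take the conditional expectation. The martingale-difference property $\mathbb{E}[d_n\mid\mathcal{F}_{n-1}]=0$ kills the linear term. This leaves $1+\tfrac{\lambda^2}{2}\mathbb{E}[d_n^2\mid\mathcal{F}_{n-1}]$, and $1+y\le e^y$ bounds that by the required exponential.

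The main obstacle is verifying $(*)$. It is equivalent to $g(x):=\log(1+x+x^2/2)-x+x^2/2\ge 0$, which makes sense because $1+x+x^2/2>0$ everywhere. We have $g(0)=0$, and
\[
g'(x)=\frac{1+x}{1+x+x^2/2}-1+x=\frac{x^3/2}{1+x+x^2/2}\cdot\frac{1}{1}+\ldots
\]
Rather than commit to a particular simplification of the fraction, I will just clear denominators: $g'(x)$ has the sign of $(1+x)-(1-x)(1+x+x^2/2)=x^3/2$. So $g'\le 0$ on $(-\infty,0]$ and $g'\ge 0$ on $[0,\infty)$, which means $g$ attains its minimum $g(0)=0$. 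This proves $(*)$, and the lemma follows for every $n\ge 1$ and every $\lambda\in\mathbb{R}$.
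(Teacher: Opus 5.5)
The paper gives no proof of its own here; it imports the lemma from Kurisu et al. Your argument is the standard exponential-supermartingale proof of this self-normalized bound (going back to Bercu and Touati), built on the same elementary inequality $e^{x-x^2/2}\le 1+x+\tfrac{x^2}{2}$, and it is correct.

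There is one arithmetic slip to fix. The quantity $(1+x)-(1-x)\bigl(1+x+\tfrac{x^2}{2}\bigr)$ equals $x+\tfrac{x^2}{2}+\tfrac{x^3}{2}=x\bigl(1+\tfrac{x}{2}+\tfrac{x^2}{2}\bigr)$, not $x^3/2$. The quadratic factor has negative discriminant and so is positive, hence $g'$ still has the sign of $x$ and your conclusion stands. You should also delete the garbled intermediate display ending in $\cdot\frac{1}{1}+\ldots$.
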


\begin{Lemma}[Theorem 1.2 in \citet{delapena2004self}]\label{lemmae2}
Let $B \geq 0$ and $A$ be two random variables satisfying
\[\mathbb{E}\left[\exp\left(\lambda A - \frac{\lambda^2}{2} B^2\right)\right] \leq 1 \quad \text{for all } \lambda \in \mathbb{R}.
\]
Then for all $y > 0$,
\[
\mathbb{E}\left[\frac{y}{\sqrt{B^2+y^2}} \exp\left(\frac{A^2}{2(B^2+y^2)}\right)\right] \leq 1.
\]
\end{Lemma}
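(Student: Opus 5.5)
The plan is the standard mixture (pseudo-maximization) argument: average the assumed exponential supermartingale-type bound over $\lambda$ against a Gaussian weight, and evaluate the resulting $\lambda$-integral in closed form inside the expectation.

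Fix $y>0$ and let $\varphi_y(\lambda):=\frac{y}{\sqrt{2\pi}}\exp\bigl(-\tfrac{y^{2}\lambda^{2}}{2}\bigr)$ be the $\mathcal{N}(0,y^{-2})$ density on $\mathbb{R}$. The hypothesis gives, for every $\lambda\in\mathbb{R}$,
\begin{align*}
\mathbb{E}\Bigl[\exp\Bigl(\lambda A-\tfrac{\lambda^{2}}{2}B^{2}\Bigr)\Bigr]\le 1 .
\end{align*}
Multiplying by $\varphi_y(\lambda)$ and integrating in $\lambda$ yields
\begin{align*}
\int_{\mathbb{R}}\mathbb{E}\Bigl[\exp\Bigl(\lambda A-\tfrac{\lambda^{2}}{2}B^{2}\Bigr)\Bigr]\varphi_y(\lambda)\,d\lambda\le\int_{\mathbb{R}}\varphi_y(\lambda)\,d\lambda=1 .
\end{align*}
The integrand $(\omega,\lambda)\mapsto\exp(\lambda A-\lambda^{2}B^{2}/2)\varphi_y(\lambda)$ is nonnegative and jointly measurable. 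Tonelli's theorem therefore lets me exchange the expectation and the $\lambda$-integral without any integrability check. This exchange is the only point needing care; it is harmless here because of nonnegativity.

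Next I compute the inner integral pathwise. Set $s^{2}:=B^{2}+y^{2}>0$, which is positive since $y>0$ and $B\ge 0$. Completing the square gives
\begin{align*}
\lambda A-\tfrac{\lambda^{2}}{2}s^{2}=-\tfrac{s^{2}}{2}\Bigl(\lambda-\tfrac{A}{s^{2}}\Bigr)^{2}+\tfrac{A^{2}}{2s^{2}} .
\end{align*}
Since $\int_{\mathbb{R}}\exp\bigl(-s^{2}(\lambda-c)^{2}/2\bigr)\,d\lambda=\sqrt{2\pi}/s$, it follows that
\begin{align*}
\int_{\mathbb{R}}\exp\Bigl(\lambda A-\tfrac{\lambda^{2}}{2}B^{2}\Bigr)\varphi_y(\lambda)\,d\lambda=\frac{y}{\sqrt{B^{2}+y^{2}}}\exp\Bigl(\frac{A^{2}}{2(B^{2}+y^{2})}\Bigr).
\end{align*}
Substituting this into the Tonelli-exchanged inequality gives exactly the claimed bound. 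There is no real obstacle beyond justifying the interchange, which Tonelli handles, and observing that $s>0$, so the Gaussian integral is finite for every $\omega$.
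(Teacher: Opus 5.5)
Your proof is correct. The paper gives no proof of its own and only cites Theorem 1.2 of \citet{delapena2004self}, whose proof is the same method-of-mixtures argument you give: integrate the hypothesis against the $\mathcal{N}(0,y^{-2})$ density in $\lambda$, interchange the integrals by Fubini/Tonelli, and evaluate the Gaussian integral in closed form.
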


% \begin{Lemma}[\textcite{hairer2011harris}]\label{harrier_theorem}
%     状態空間 $\left(\mathcal{X},\mathcal{B}(\mathcal{X})\right)$ に値をとるマルコフ連鎖 $\{X_{t}\}_{t \in \mathbb{Z}}$ が以下の仮定を満たすとする：
% \begin{itemize}
%     \item 可測写像 $V: \mathcal{X} \to \mathbb{R}^{+}$, $0 < \rho < 1$, および $\kappa \geq 0$ となる定数が存在して，
% \begin{align}
% \left(\mathcal{P}V\right)(x) &\leq \rho V(x) + \kappa,\quad x \in \mathcal{X}. \tag{A} \label{ca}
% \end{align}
% \item $R > 2\kappa/(1 - \rho)$ を満たすようなある定数 $R > 0$ に対して，
% $\left(\mathcal{X},\mathcal{B}(\mathcal{X})\right)$ 上の確率測度 $\nu_{R}$ と定数 $(0 <) \varepsilon_{R} (\leq 1)$ が存在して
% 任意の $x \in V_{R} := \left\{x \in \mathcal{X} \mid V(x) \leq R \right\}$ 上で以下が成り立つ：
% \begin{align}
%     &\mathcal{P}\left(x, \cdot \right) \geq \varepsilon \nu(\cdot). \tag{B} \label{cb}
% \end{align}
% \end{itemize}
% このとき $\{X_{t}\}_{t \in \mathbb{Z}}$ には一意な定常分布が存在し，
% さらに定数 $\lambda \in (0,1)$ と $C > 0$ が存在して 
% $
% \left\| f \right\|_{V} = \sup_{x \in \mathcal{X}}
% \frac{\lvert f(x) \rvert}{1 + V(x)}
% $ をみたす任意の可測写像
% $f:\mathcal{X} \to \mathbb{R}$ に対して
% \begin{align}
%     \left\lvert \mathcal{P}_{t}\psi - \pi\psi \right\rvert &\leq C\lambda^{t}\left(1 + V(x)\right)\left\| f \right\|_{V}, \nonumber
% \end{align}
% が任意の $t \in \mathbb{N}$ と任意の $x \in \mathcal{X}$ に対して成り立つ．
% \end{Lemma}

\begin{Lemma}[Theorem~1.2 in \citet{hairer2011harris}]\label{harrier_theorem}
Let $\{X_{t}\}_{t \in \mathbb{N}}$ be a Markov chain on $\big(\mathcal{H},\mathcal{B}(\mathcal{H})\big)$.
Suppose that there exist a measurable function $V:\mathcal{H}\to\mathbb{R}_{+}$ and constants $0<\rho<1$, $\kappa\ge 0$ such that
    \begin{align}
        (\mathcal{P}V)(x)\le \rho\,V(x)+\kappa,\quad x\in\mathcal{H}. \tag{A}\label{ca}
    \end{align}
Moreover, for some $R>2\kappa/(1-\rho)$ there exist a probability measure $\nu_{R}$ on $\big(\mathcal{H},\mathcal{B}(\mathcal{H})\big)$ and a constant $\varepsilon_{R}\in(0,1]$ such that, for all $x\in V_{R}:=\{x\in\mathcal{H}:V(x)\le R\}$,
    \begin{align}
        \mathcal{P}(x,\cdot)\ \ge\ \varepsilon_{R}\,\nu_{R}(\cdot). \tag{B}\label{cb}
    \end{align}
Then, the chain admits a unique invariant probability measure $\pi$, and there exist constants $\lambda\in(0,1)$ and $C>0$ such that, for every measurable $f:\mathcal{H}\to\mathbb{R}$ with
\[
\|f\|_{V}:=\sup_{x\in\mathcal{H}}\frac{\lvert f(x) \rvert}{1+V(x)}<\infty,
\]
we have for all $t\in\mathbb{N}$ and $x\in\mathcal{H}$,
\begin{align}
    \big|(\mathcal{P}^{t}f)(x)-\pi f\big|\ \le\ C\,\lambda^{t}\,(1+V(x))\,\|f\|_{V}. \nonumber
\end{align}
\end{Lemma}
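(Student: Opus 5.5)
We would follow the weighted-metric argument of \citet{hairer2011harris}. The first step is to fix a parameter $\beta>0$, to be chosen later, and introduce on $\mathcal{H}$ the metric
\[
d_{\beta}(x,y):=\bigl(2+\beta V(x)+\beta V(y)\bigr)\bm{1}_{\{x\neq y\}},
\]
together with the seminorm $\|\phi\|_{\beta}:=\sup_{x\neq y}|\phi(x)-\phi(y)|/d_{\beta}(x,y)$. One checks directly that $\|\phi\|_{\beta}$ is equivalent to $\inf_{c\in\mathbb{R}}\|\phi+c\|_{1+\beta V}$. Hence $\|f\|_{\beta}\lesssim \|f\|_{V}$, with a constant depending only on $\beta$. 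The core of the proof is a one-step contraction: there exist $\beta>0$ and $\bar\gamma\in(0,1)$ such that $\|\mathcal{P}\phi\|_{\beta}\le\bar\gamma\|\phi\|_{\beta}$ for all $\phi$. It suffices to consider $\|\phi\|_{\beta}\le 1$ and bound $|(\mathcal{P}\phi)(x)-(\mathcal{P}\phi)(y)|$ by $\bar\gamma\, d_{\beta}(x,y)$ for each $x\neq y$.

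The contraction is proved by splitting into two cases according to $V(x)+V(y)$.

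\emph{Far case}, $V(x)+V(y)>R$. After shifting $\phi$ by a constant, we may assume $|\phi|\le 1+\beta V$. Condition \eqref{ca} then gives
\[
|(\mathcal{P}\phi)(x)-(\mathcal{P}\phi)(y)|\le 2+\beta\rho\bigl(V(x)+V(y)\bigr)+2\beta\kappa .
\]
Since $R>2\kappa/(1-\rho)$, we can fix $\rho_{1}\in(\rho,1)$ with $2\kappa<(\rho_{1}-\rho)R$. Then the right-hand side is at most $2+\beta\rho_{1}(V(x)+V(y))$. For any fixed $\beta$ this is bounded by $\gamma_{1} d_{\beta}(x,y)$ with $\gamma_{1}<1$: the ratio is largest when $V(x)+V(y)$ is close to $R$, where the large weight $\beta R$ lets the factor $\rho_{1}<1$ dominate the constant $2$.

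\emph{Near case}, $V(x)+V(y)\le R$. Here both points lie in $V_{R}$, so \eqref{cb} lets us write $\mathcal{P}(z,\cdot)=\varepsilon_{R}\nu_{R}+(1-\varepsilon_{R})\tilde{\mathcal{P}}(z,\cdot)$ for $z\in\{x,y\}$, with $\tilde{\mathcal{P}}(z,\cdot)$ a probability kernel. The common part $\varepsilon_{R}\nu_{R}$ cancels in the difference. Controlling the remainder with $|\phi|\le 1+\beta V$ and \eqref{ca} gives
\[
|(\mathcal{P}\phi)(x)-(\mathcal{P}\phi)(y)|\le 2(1-\varepsilon_{R})+\beta\bigl(\rho R+2\kappa\bigr).
\]
Choosing $\beta$ small makes this at most $(2-\varepsilon_{R})\le\gamma_{2}d_{\beta}(x,y)$ with $\gamma_{2}<1$. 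With this $\beta$ fixed, we set $\bar\gamma:=\max(\gamma_{1},\gamma_{2})$.

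The main obstacle is choosing $\beta$ so that both cases hold simultaneously. The near case forces $\beta$ small. The far case must then be rechecked for that small $\beta$, and it survives precisely because the threshold satisfies $R>2\kappa/(1-\rho)$ rather than merely $R>\kappa/(1-\rho)$.

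Finally, by duality the contraction holds for the Wasserstein distance induced by $d_{\beta}$ on probability measures $\mu$ with $\int V\,d\mu<\infty$, and this space is complete. Banach's fixed point theorem therefore yields a unique invariant $\pi$. Iterating \eqref{ca} gives $\int V\,d\pi\le\kappa/(1-\rho)$. Using invariance of $\pi$ and iterating the contraction,
\[
|(\mathcal{P}^{t}f)(x)-\pi f|\le\int|(\mathcal{P}^{t}f)(x)-(\mathcal{P}^{t}f)(y)|\,\pi(dy)\le\bar\gamma^{t}\|f\|_{\beta}\int d_{\beta}(x,y)\,\pi(dy).
\]
The last integral is $\lesssim 1+V(x)$. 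Combined with $\|f\|_{\beta}\lesssim\|f\|_{V}$, this gives the stated bound with $\lambda=\bar\gamma$.
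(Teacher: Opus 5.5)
Your proposal is correct and is essentially the proof of Theorem~1.2 in \citet{hairer2011harris}: the weighted metric $d_\beta$, a one-step contraction of the $d_\beta$-Lipschitz seminorm proved by splitting into two cases according to $V(x)+V(y)$, and Banach's fixed point theorem; the paper does not reprove this result and only cites it. Two small points should be tightened. First, the normalization $|\phi|\le 1+\beta V$ after a shift needs the exact identity $\sup_{x\neq y}|\phi(x)-\phi(y)|/d_\beta(x,y)=\inf_{c}\sup_{x}|\phi(x)+c|/(1+\beta V(x))$ with the infimum attained, not mere equivalence of the two quantities. Second, uniqueness among all invariant probability measures, including those not integrating $V$, follows from your final bound applied to bounded $f$ together with dominated convergence.
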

%  We recall the geometric drift condition \emph{(V4)} in \citet{meyn_tweedie_2009}.
% Let $\mathsf{X}$ be a state space.
% There exist an extended-real valued function
% $V : \mathsf{X} \to [1,\infty)$,
% a measurable set $C \subset \mathsf{X}$,
% and constants $\beta > 0$ and $b < \infty$
% such that
% \begin{align}
% \mathcal{P}V(x) - V(x) 
% \leq -\beta V(x) + b \mathbf{1}_C(x),
% \qquad x \in \mathsf{X}. \label{V4}
% \end{align}
% All notation and terminology are as in 
% \citet{meyn_tweedie_2009}.
Lemmas \ref{drift_condition_dash} and \ref{drift_control} are restated from \citet{meyn_tweedie_2009}. We use the notation and terminology of \citet{meyn_tweedie_2009}.
Let $\boldsymbol{X} := \{X_t\}_{t \in \mathbb{N}}$ be a Markov chain on $\left(\mathcal{H},\mathcal{B}(\mathcal{H})\right)$. 
We begin by recalling the geometric drift condition 
$(\mathrm{V4})$ of \citet{meyn_tweedie_2009}. 
There exist an extended-real-valued function $V:\mathcal{H}\to[1,\infty]$, a measurable set $C\subset\mathcal{H}$, and constants $\beta>0$ and $b<\infty$ such that
\begin{align}
\mathcal{P}V(x)-V(x)
\leq -\beta V(x)+b\mathbf{1}_C(x),
\qquad x\in\mathcal{H}. \label{V4}
\end{align}
For any measurable set $C\subset\mathcal{H}$, let
$
\tau_C := \inf\{t\in\mathbb{N}: t \geq 2,  X_t\in C\}$. 
% For a measurable function $f:\mathsf{X}\to[1,\infty)$ and a probability measure $\nu$ on $(\mathsf{X},\mathcal{B}(\mathsf{X}))$, define
% \begin{align}
% \|\nu\|_f
% :=
% \sup_{\substack{g:\mathsf{X}\to\mathbb{R}\\ \lvert g(x) \rvert \leq f(x),\ x\in\mathsf{X}}}
% \left|\int_{\mathsf{X}} g(x)\,\nu(dx)\right|.
% \end{align}
% All notation and terminology are as in 
% \citet{meyn_tweedie_2009}.
\begin{Lemma}[Lemma 15.2.8 in \citet{meyn_tweedie_2009}]\label{drift_condition_dash}
The drift condition \eqref{V4} holds with a petite set $C$ if and only if
$V$ is unbounded off petite sets and
\begin{equation}\label{eq:linear_drift}
(\mathcal{P}V)(x) \le \lambda V(x) + L, \quad x \in \mathcal{H},
\end{equation}
for some $\lambda < 1$ and $L < \infty$.
\end{Lemma}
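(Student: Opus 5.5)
The plan is to prove the two implications separately, working with the chain $\boldsymbol{X}$ on $(\mathcal{H},\mathcal{B}(\mathcal{H}))$ under the standing assumptions of \citet{meyn_tweedie_2009}. In particular I assume $\psi$-irreducibility, which is what gives meaning to petite sets and to the phrase ``unbounded off petite sets'' (every sublevel set $\{V\le r\}$ is petite).

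\emph{Converse direction (linear drift $\Rightarrow$ \eqref{V4}).} This is the easy direction, so I do it first.
\begin{enumerate}
\item Assume $(\mathcal{P}V)(x)\le\lambda V(x)+L$ with $\lambda<1$ and $L<\infty$.
\item Set $\beta:=(1-\lambda)/2>0$ and $C:=\{x: V(x)\le r\}$ with $r:=2L/(1-\lambda)$.
\item Then
\[
(\mathcal{P}V)(x)-V(x)\le-(1-\lambda)V(x)+L=-\beta V(x)-\beta V(x)+L .
\]
\item For $x\notin C$ we have $\beta V(x)>\beta r=L$, so the right-hand side is at most $-\beta V(x)$.
\item For $x\in C$ the right-hand side is at most $-\beta V(x)+L$.
\item Hence \eqref{V4} holds with $b:=L$. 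The set $C$ is petite precisely because $V$ is unbounded off petite sets.
\end{enumerate}

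\emph{Forward direction (\eqref{V4} with petite $C$ $\Rightarrow$ the two properties).} The linear drift bound is immediate. Dropping the indicator gives
\[
(\mathcal{P}V)(x)\le(1-\beta)V(x)+b .
\]
If $\beta<1$, take $\lambda:=1-\beta$ and $L:=b$. If $\beta\ge 1$, the bound $(\mathcal{P}V)(x)\le b$ holds because $V\ge 1$ and $\mathcal{P}V\ge 0$, so any $\lambda\in(0,1)$ together with $L:=b$ works.

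The substantive part, and the main obstacle, is to show that every sublevel set $C_V(r):=\{V\le r\}$ is petite.
\begin{enumerate}
\item Since $V\ge 1$, \eqref{V4} implies the weaker drift $\mathcal{P}V-V\le-\beta+b\mathbf 1_C$.
\item Apply the comparison theorem (Dynkin's formula) with stopping time $\tau_C$. For $x\notin C$ this gives $\mathbb{E}_x[\tau_C]\lesssim V(x)/\beta$.
\item For $x\in C$, condition on the first step and use $(\mathcal{P}V)(x)\le V(x)+b$ to obtain $\mathbb{E}_x[\tau_C]\lesssim 1+(V(x)+b)/\beta$.
\item Consequently $\sup_{x\in C_V(r)}\mathbb{E}_x[\tau_C]<\infty$ for every $r$.
\item By Markov's inequality there is $n$ with $\inf_{x\in C_V(r)}\mathbb{P}_x(\tau_C\le n)\ge 1/2$.
\item Since $C$ is petite, any set from which $C$ is reached with probability uniformly bounded below is petite. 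This is the standard consequence of Chapman--Kolmogorov combined with the sampled-kernel minorization defining petiteness (Proposition 5.5.4 / Lemma 11.3.? type results in \citet{meyn_tweedie_2009}). Hence $C_V(r)$ is petite, i.e.\ $V$ is unbounded off petite sets.
\end{enumerate}

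The delicate point is step 6: one must combine the sampling distribution of the petite set $C$ with the hitting-time bound while keeping the minorizing measure independent of the starting point $x$. This is where I would rely on the cited Meyn--Tweedie results rather than rebuild the argument from scratch.
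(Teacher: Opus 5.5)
\textbf{Verdict: your proof is correct.} The paper gives no argument of its own for this lemma: it imports the result from Meyn and Tweedie, and uses only the ``if'' direction (linear drift $\Rightarrow$ \eqref{V4}) in the proof of Lemma~\ref{moment_control_appendix}. What you wrote is essentially the standard argument behind that citation.
\begin{itemize}
\item For the converse, take $\beta=(1-\lambda)/2$ and let $C$ be a sublevel set of $V$.
\item For the forward direction, use the comparison-theorem bound $\beta\,\mathbb{E}_x[\tau_C]\le V(x)+b\mathbf{1}_C(x)$ on the return time to $C$ (Meyn--Tweedie indexing), followed by uniform accessibility of the petite set $C$. A single application of the comparison theorem already covers $x\in C$ and $x\notin C$, so your first-step conditioning in step 3 is not needed.
\end{itemize}

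Compared with the bare citation, your version makes two things visible.

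\emph{Step 6 is not delicate.} Write $K_a:=\sum_n a(n)\mathcal{P}^n$ for the sampled kernel and take $a$ uniform on $\{1,\dots,n\}$. Your Markov-inequality step then gives $K_a(x,C)\ge 1/(2n)$ on $\{V\le r\}$. If $K_b(y,\cdot)\ge\nu_b(\cdot)$ for $y\in C$, Chapman--Kolmogorov for sampled kernels yields $K_{a*b}(x,\cdot)=\int K_a(x,dy)K_b(y,\cdot)\ge K_a(x,C)\,\nu_b(\cdot)\ge\nu_b(\cdot)/(2n)$. So the minorizing measure is automatically independent of $x$. In particular, $\psi$-irreducibility is never used, since petiteness is defined without it. This is consistent with the paper stating the lemma without that hypothesis.

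\emph{It isolates the nontrivial input.} In the direction the paper actually needs, the only real input is that the sublevel sets of $\tilde V=1+\|x\|_{\mathcal H}$, i.e.\ balls, are petite. In the paper's setting this follows from the minorization (B), which is established for every $R>0$ in the proof of Theorem~\ref{exponential_ergodic}.

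A minor simplification: the case split on $\beta$ is unnecessary, since $\lambda:=1-\beta<1$ is admissible even when it is nonpositive.
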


We shall use the following part of Theorem 14.1 of \citet{meyn_tweedie_2009}.

\begin{Lemma}\label{drift_control}
Suppose that $\boldsymbol{X}$ is $\psi$-irreducible and aperiodic, and let $f:\mathcal{H}\to[1,\infty)$ be a measurable function. Then the following conditions are equivalent:

\begin{enumerate}
    \item The chain is positive recurrent with invariant probability measure $\pi$ and
    \[
        \int_{\mathcal{H}} f(x)\pi(dx) < \infty.
    \]

    \item There exists some petite set $C \in \mathcal{B}(\mathcal{H})$ such that
    \[
        \sup_{x \in C} \mathbb{E} \left[ \sum_{t=1}^{\tau_{C}-1} f(X_t)\middle| X_{1} = x \right] < \infty.
    \]
    
    \item There exist a petite set $C$, an extended-real-valued nonnegative function 
$V$ satisfying $V(x_0)<\infty$ for some $x_0\in\mathcal{H}$, and a constant 
$b<\infty$ such that
    \[
        \mathcal{P}V(x) - V(x) \leq - f(x) + b \mathbf{1}_C(x), \quad x \in \mathcal{H}.
    \]
\end{enumerate}
% Any of these three conditions imply that the set 
% \[
%     S_V := \{ x : V(x) < \infty \}
% \]
% is absorbing and full, where $V$ is any solution to (iii) satisfying its conditions, and any level set of $V$ satisfies (ii). Moreover, for any $x \in S_V$,
% \[
%     \| \mathcal{P}^{t}(x, \cdot) - \pi \|_f \to 0 \quad \text{as } t \to \infty.
% \]

% Furthermore, if $\pi(V) < \infty$, then there exists a finite constant $B_f$ such that for all $x \in S_V$,
% \[
%     \sum_{t=1}^\infty \| \mathcal{P}^{t}(x, \cdot) - \pi \|_f \leq B_f \big( V(x) + 1 \big).
% \]
\end{Lemma}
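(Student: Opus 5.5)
The plan is to follow the $f$-regularity theory of \citet{meyn_tweedie_2009}, going around the cycle (iii)$\Rightarrow$(ii)$\Rightarrow$(i)$\Rightarrow$(iii). Throughout, I use $\psi$-irreducibility and aperiodicity in two ways: to guarantee that petite sets exist and are reached, and to allow the Nummelin splitting construction, which turns a petite set into an atom for a split chain.

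\emph{(iii)$\Rightarrow$(ii).} The tool is the comparison theorem, i.e.\ a Dynkin-type telescoping of the drift inequality along the chain stopped at $\tau_C$. Iterating $\mathcal{P}V-V\le -f+b\mathbf{1}_C$ and applying Fatou gives, for every $x$,
\[
\mathbb{E}\Bigl[\sum_{t=1}^{\tau_C-1} f(X_t)\Bigm| X_1=x\Bigr]\le V(x)+b .
\]
This is not yet (ii), because $V$ need not be bounded on $C$. I would repair this by replacing $C$ with $C_n:=C\cap\{V\le n\}$. Subsets of petite sets are petite, so each $C_n$ is petite. The drift inequality still holds with $C$ replaced by $C_n$ at the cost of a larger constant $b$, because outside $C_n$ but inside $C$ we have $V>n$, and $f\ge1$ then absorbs the extra term for large $n$. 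The first-moment bound therefore holds uniformly over $x\in C_n$. Finiteness of $V$ at $x_0$ together with irreducibility ensures that $C_n$ is nonnull for $n$ large.

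\emph{(ii)$\Rightarrow$(i).} Take $f\equiv1$ in (ii), which is legitimate since $f\ge1$. This shows that the return times to $C$ have uniformly bounded mean on $C$, so the chain is positive recurrent. I would then construct the invariant measure via the split chain and Kac's representation:
\[
\pi(A)\propto\int_C \pi(dy)\,\mathbb{E}\Bigl[\sum_{t=1}^{\tau_C-1}\mathbf{1}_A(X_t)\Bigm| X_1=y\Bigr].
\]
Integrating $f$ against this representation yields $\pi(f)\le \pi(C)\sup_{x\in C}\mathbb{E}[\sum f(X_t)\mid X_1=x]<\infty$.

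\emph{(i)$\Rightarrow$(iii).} Given a petite set $C$, set
\[
V(x):=\mathbb{E}\Bigl[\sum_{t=1}^{\sigma_C} f(X_t)\Bigm| X_1=x\Bigr],
\]
where $\sigma_C$ is the first hitting time of $C$ counting from the current time. The Markov property gives $\mathcal{P}V=V-f$ off $C$. On $C$, $\mathcal{P}V$ is bounded provided $C$ is chosen to be $f$-regular. This yields (iii), with $V(x_0)<\infty$ for some $x_0$.

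I expect the main obstacle to be producing such an $f$-regular petite set from $\pi(f)<\infty$ alone. My first attempt would use the split-chain atom $\alpha$. Kac's formula gives $\mathbb{E}_\alpha[\sum_{t<\tau_\alpha}f]=\pi(f)/\pi(\alpha)<\infty$. From this one shows that $\{x: V_\alpha(x)\le n\}$ is petite and covers $\mathcal{H}$ up to a $\pi$-null set. Transferring this back from the split chain to the original chain is the delicate step.
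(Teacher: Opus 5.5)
The paper gives no proof of this lemma; it is quoted from \citet{meyn_tweedie_2009}. Your outline therefore amounts to reconstructing their $f$-regularity theory, and it has two genuine gaps.

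\emph{The localization in (iii)$\Rightarrow$(ii) fails.} On $C\setminus C_n$ the hypothesis only yields $\mathcal{P}V\le V-f+b$. Drift localized to $C_n$ would require $\mathcal{P}V\le V-f$ there. Knowing $V>n$ does not remove the additive $b$, and $f\ge1$ enters with a minus sign, so it absorbs nothing. A countable example shows this. Take states $a,c_j,d_j$ ($j\ge1$) with $\mathcal{P}(a,\{c_j\})=q_j>0$ and $\sum_j jq_j<\infty$, $\mathcal{P}(c_j,\{a\})=\mathcal{P}(c_j,\{d_j\})=1/2$, $\mathcal{P}(d_j,\{a\})=1$. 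Set $f\equiv1$, $V(a)=0$, $V(c_j)=j$, $V(d_j)=2j$. The chain is irreducible and aperiodic. The set $C=\{a\}\cup\{c_j\}_{j\ge1}$ is small because $\mathcal{P}^2(x,\{a\})\ge1/2$ on $C$, and (iii) holds with $b=1+\sum_j jq_j$. Yet $\mathcal{P}V(c_j)=V(c_j)>V(c_j)-f(c_j)$ for every $j>n$, so no constant works with the same $V$. What is true is the comparison bound
\[
\mathbb{E}\Bigl[\sum_{t=1}^{\tau_{C_n}-1} f(X_t)\Bigm| X_1=x\Bigr]\le V(x)+b\,\mathbb{E}\Bigl[\sum_{t=1}^{\tau_{C_n}-1}\mathbf{1}_C(X_t)\Bigm| X_1=x\Bigr].
\]
The missing ingredient is a bound, uniform in $x$, on the expected number of visits to $C$ before $C_n$ is reached. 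It comes from petiteness of $C$: take the minorizing measure to be a maximal irreducibility measure. This gives $m$ and $\varepsilon>0$ such that from every point of $C$ the chain enters $C_n$ within $m$ steps with probability at least $\varepsilon$, hence a uniformly bounded expected number of visits. This also needs $C_n$ to be nonnull for large $n$, i.e.\ that $C$ is nonnull and that $\{V<\infty\}$ is absorbing, hence full.

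\emph{The direction (i)$\Rightarrow$(iii) is left open, and it is where the content of the theorem lies.} First, your requirement that $C$ be $f$-regular can be weakened. If (ii) holds for a petite $C$ with bound $M$, the $t=1$ term gives $f\le M$ on $C$, so your $V$ satisfies $\mathcal{P}V\le 2M$ on $C$. Thus (ii)$\Rightarrow$(iii) is immediate, and everything reduces to (i)$\Rightarrow$(ii), which you do not address.

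Kac's formula gives a finite expected excursion cost only from the atom (or, via the representation of $\pi$, only $\pi$-a.e.\ on a petite set). Sublevel sets of such an a.e.-finite cost hit the same obstruction as above: an excursion started in the good part may end in the bad part.

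The missing idea is $f$-regularity of the atom. Excursions from $\check\alpha$ are i.i.d.\ with finite expected $f$-cost, and each enters a fixed nonnull set with a fixed positive probability. By Wald's identity, every nonnull set is therefore reached from $\check\alpha$ with finite expected $f$-cost. Let $g(x)$ be the expected $f$-cost of reaching $\check\alpha$ from the split initial law $x^{*}$. Then the sets $\{x\in\mathcal{H}: g(x)\le n\}$ have three properties:
\begin{itemize}
\item they are petite, since Markov's inequality gives a uniform probability of hitting $\check\alpha$ by time $2n$;
\item they are nonnull for large $n$, since $\{g<\infty\}$ is absorbing and nonempty;
\item they satisfy (ii).
\end{itemize}
The transfer back that worried you is then automatic, because the split chain started from $x^{*}$ projects onto the original chain started at $x$.

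What does require extra work is that splitting needs a one-step minorization. $\psi$-irreducibility and aperiodicity provide this only for a skeleton or resolvent chain, so the $f$-sums must also be carried back from that auxiliary chain. Meyn and Tweedie's route to (i)$\Rightarrow$(ii) runs through this split-chain, $f$-regularity-of-the-atom argument.
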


\begin{Lemma}[Proposition 1 in \citet{davydov1973mixing}]\label{ergodic_to_mixing}
    Suppose that $\{X_{t}\}_{t \in \mathbb{N}}$ is stationary and admits the unique invariant measure. Then, the following statement holds:
    \begin{align}
        &\beta(j) = \int_{\mathcal{H}} \left\| \mathbb{P}(X_{j} \in \cdot) - \mathbb{P}(X_{j} \in \cdot \mid x)\right\|_{\mathrm{TV}}\pi(dx). \nonumber
    \end{align}
\end{Lemma}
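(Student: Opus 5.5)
The plan is to reduce the $\beta$-mixing coefficient, via the Markov property and stationarity, to a total variation distance between two laws of the single coordinate $X_{j+n}$. Fix $n\in\mathbb{N}$ and $j\in\mathbb{N}$. Since $\mathcal{H}$ is a separable Hilbert space, hence Polish, the path space $\mathcal{H}^{\mathbb{N}}$ is Polish as well. Therefore regular conditional distributions of the future path $Y:=(X_{j+n},X_{j+n+1},\dots)$ given $(X_1,\dots,X_n)$ exist. For $B\in\mathcal{F}_{j+n}^{\infty}$ we write $B=\{Y\in \tilde B\}$ with $\tilde B\in\mathcal{B}(\mathcal{H}^{\mathbb{N}})$.

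\textbf{Step 1 (Markov reduction).} By the Markov property, the conditional law of $Y$ given $X_1,\dots,X_n$ depends only on $X_n$. It equals $\int \mathcal{P}^{j}(X_n,dy)\,\mathbb{Q}_y$, where $\mathbb{Q}_y$ is the law of the chain path started at $y$. Here $\mathcal{P}^j(X_n,\cdot)$ is what the statement denotes $\mathbb{P}(X_j\in\cdot\mid x)$ after the index shift $X_n\mapsto x$. By stationarity, the unconditional law of $Y$ is $\int \pi(dy)\,\mathbb{Q}_y$. Hence
\[
\sup_{B\in\mathcal{F}_{j+n}^{\infty}}\bigl|\mathbb{P}(B)-\mathbb{P}(B\mid X_n,\dots,X_1)\bigr|
=\Bigl\|\textstyle\int \pi(dy)\mathbb{Q}_y-\int \mathcal{P}^j(X_n,dy)\mathbb{Q}_y\Bigr\|_{\mathrm{TV}} \quad\text{a.s.}
\]

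\textbf{Step 2 (path TV equals marginal TV).} I will show that for probability measures $\mu,\nu$ on $\mathcal{H}$,
\[
\Bigl\|\textstyle\int\mu(dy)\mathbb{Q}_y-\int\nu(dy)\mathbb{Q}_y\Bigr\|_{\mathrm{TV}}=\|\mu-\nu\|_{\mathrm{TV}}.
\]
The bound "$\ge$" follows by restricting to cylinder sets $\{y_0\in A\}$, since the first coordinate under $\mathbb{Q}_y$ is $y$. The bound "$\le$" follows because $y\mapsto\mathbb{Q}_y(\tilde B)\in[0,1]$ is measurable: writing $\mu-\nu$ via its Hahn--Jordan decomposition, $\bigl|\int \mathbb{Q}_y(\tilde B)(\mu-\nu)(dy)\bigr|\le\|\mu-\nu\|_{\mathrm{TV}}$. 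Applying this with $\mu=\pi$ and $\nu=\mathcal{P}^j(X_n,\cdot)$ gives the integrand $\|\pi-\mathcal{P}^j(X_n,\cdot)\|_{\mathrm{TV}}$. Since $\pi$ is the law of $X_{j}$ under stationarity, this is the quantity in the statement.

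\textbf{Step 3 (expectation and supremum over $n$).} Taking expectations, and using that $X_n\sim\pi$ by stationarity, gives
\[
\int\|\mathbb{P}(X_j\in\cdot)-\mathcal{P}^j(x,\cdot)\|_{\mathrm{TV}}\,\pi(dx),
\]
which does not depend on $n$. Hence the supremum over $n$ in the definition of $\beta(j)$ is attained by every $n$. Uniqueness of the invariant measure is used only to identify the stationary marginal with $\pi$.

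I expect the main obstacle to be the measure-theoretic bookkeeping in Step 1. The supremum over the uncountable family $\mathcal{F}_{j+n}^{\infty}$ sits inside an expectation, so measurability must be justified and the conditional probabilities must be chosen as one regular version simultaneously for all $B$. I would handle this by noting that $\mathcal{B}(\mathcal{H}^{\mathbb{N}})$ is countably generated. The supremum can then be taken over a countable generating algebra: by monotone class, the total variation is approximated on such an algebra. This makes the supremum measurable and lets the a.s.\ identities hold off a single null set.
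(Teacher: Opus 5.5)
Your proof is correct. The paper gives no proof of its own: the identity is imported from \citet{davydov1973mixing} without argument and used only in the proof of Corollary~\ref{beta_mixing_for_far_chapter3}. Your route is therefore different only in being self-contained. It reduces the conditional law of the future path to $\int \mathcal{P}^{j}(X_n,dy)\,\mathbb{Q}_y$ via the Markov property. It then observes that mixing the same path laws $\mathbb{Q}_y$ against two initial laws preserves total variation, and finally uses stationarity to average over $X_n\sim\pi$.

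The citation costs a line where your derivation costs a page, but yours makes explicit three points the statement leaves implicit.

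First, the integrand must use the $j$-step kernel $\mathcal{P}^{j}(x,\cdot)$, matching the $j$-step gap between $X_n$ and $X_{n+j}$ in the definition of $\beta(j)$. The literal reading $\mathbb{P}(X_j\in\cdot\mid X_1=x)=\mathcal{P}^{j-1}(x,\cdot)$ would be off by one, so your ``index shift'' is the right interpretation.

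Second, with the paper's normalization $\|\mu-\nu\|_{\mathrm{TV}}=\sup_A|\mu(A)-\nu(A)|$, the identity holds with constant one, whereas $\sup_{\|\phi\|_{\mathcal{C}_b(\mathcal{H})}\le 1}|\int\phi\,d(\mu-\nu)|=2\|\mu-\nu\|_{\mathrm{TV}}$. Hence the right-hand side of the first display in the proof of Corollary~\ref{beta_mixing_for_far_chapter3} actually equals $2\beta(j)$, not $\beta(j)$. This is harmless there, since only an upper bound is needed.

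Third, you show several things the paper does not address:
\begin{itemize}
\item uniqueness of $\pi$ is only notational;
\item the supremum over $n$ in the definition of $\beta(j)$ is attained at every $n$;
\item $\mathbb{E}[\sup_{B}\cdots]$ is well posed once a regular conditional distribution on the Polish path space is fixed and the supremum is reduced to a countable generating algebra.
\end{itemize}
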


\begin{Lemma}[Theorem 1 in \citet{Giorgobiani2019SubGaussian}]
Let $\xi$ be a separably valued Gaussian random element in $\mathcal{H}$.
Then there exists $\lambda > 0$ such that
\[
\mathbb{E}\exp\!\left(\lambda \|\xi\|_{\mathcal{H}}^2\right) < +\infty .
\]
\end{Lemma}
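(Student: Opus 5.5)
The statement to prove is the last lemma: a centered Gaussian random element $\xi$ in the separable Hilbert space $\mathcal{H}$ admits some $\lambda>0$ with $\mathbb{E}\exp(\lambda\|\xi\|_{\mathcal{H}}^2)<\infty$. I would prove it through the Karhunen--Loève expansion, reducing the expectation to an explicit infinite product.

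\textbf{Step 1 (expansion).} Write the covariance operator as $Q=\sum_i\lambda_i e_i\otimes e_i$, with $\lambda_i\ge 0$ and $\sum_i\lambda_i=\|Q\|_{\mathbb{T}}<\infty$. Then $\xi=\sum_i\langle\xi,e_i\rangle_{\mathcal{H}}e_i$. The coordinates $\langle\xi,e_i\rangle_{\mathcal{H}}$ are jointly Gaussian, centered, with covariance $\langle Qe_i,e_j\rangle_{\mathcal{H}}=\lambda_i\delta_{ij}$. Being uncorrelated and jointly Gaussian, they are independent, so we may write $\langle\xi,e_i\rangle_{\mathcal{H}}=\sqrt{\lambda_i}Z_i$ with $Z_i$ i.i.d.\ $\mathcal{N}(0,1)$. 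Parseval then gives
\[
\|\xi\|_{\mathcal{H}}^2=\sum_i\lambda_i Z_i^2 \quad \text{almost surely.}
\]

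\textbf{Step 2 (exponential moment of partial sums).} Take $0<\lambda<1/(2\max_i\lambda_i)$; this is possible because $\lambda_i\le\|Q\|_{\mathcal{L}}<\infty$. By independence and the chi-square moment generating function,
\[
\mathbb{E}\exp\Bigl(\lambda\sum_{i\le n}\lambda_iZ_i^2\Bigr)=\prod_{i\le n}(1-2\lambda\lambda_i)^{-1/2}.
\]
\textbf{Step 3 (monotone convergence).} The partial sums increase to $\|\xi\|_{\mathcal{H}}^2$, so by monotone convergence
\[
\mathbb{E}\exp\bigl(\lambda\|\xi\|_{\mathcal{H}}^2\bigr)=\prod_i(1-2\lambda\lambda_i)^{-1/2}.
\]
The infinite product is finite if and only if $\sum_i -\log(1-2\lambda\lambda_i)<\infty$. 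Since $2\lambda\lambda_i\le c<1$ uniformly in $i$, we have $-\log(1-x)\le C_c\,x$ for $x\in[0,c]$, so the sum is at most $C_c\,2\lambda\sum_i\lambda_i<\infty$ by the trace-class property.

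The only step needing care is Step 1: justifying that the coordinates are independent standard Gaussians and that the expansion reproduces $\|\xi\|_{\mathcal{H}}^2$ almost surely. This follows from the definition of a Gaussian measure on $\mathcal{H}$, under which every finite family of linear functionals is jointly Gaussian, together with Parseval's identity. Steps 2 and 3 are routine.

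As a by-product, choosing $\lambda$ small enough that the product is at most $2$ yields the constant $K'$ in \eqref{Kdash}.
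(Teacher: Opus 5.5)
Your proof is correct, and it takes a genuinely different route from the paper. The paper gives no argument of its own: it imports the statement from \citet{Giorgobiani2019SubGaussian}, a Fernique-type result for Gaussian random elements of general Banach spaces (Gaussian elements are sub-Gaussian, hence square-exponentially integrable). You work directly in $\mathcal{H}$, using the Karhunen--Lo\`eve diagonalization of $Q$, independence of the coordinates, the $\chi^{2}_{1}$ moment generating function, and monotone convergence.

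The cited theorem buys generality: it needs no spectral structure and no prior information about the covariance. Your argument buys explicitness. It computes $\mathbb{E}\exp(\lambda\|\xi\|_{\mathcal{H}}^{2})=\prod_{i}(1-2\lambda\lambda_{i})^{-1/2}$ exactly, and it shows the admissible range is exactly $\lambda<1/(2\|Q\|_{\mathcal{L}})$. Via $-\log(1-x)\le x/(1-x)$, it also gives the bound $\exp\{\lambda\|Q\|_{\mathbb{T}}/(1-2\lambda\|Q\|_{\mathcal{L}})\}$. With $\lambda=1/(4\|Q\|_{\mathbb{T}})$ this is at most $e^{1/2}<2$, so one may take $K'=2\sqrt{\|Q\|_{\mathbb{T}}}$ in \eqref{Kdash}. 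That makes $K'$ essentially redundant in the constant $K$ of Theorem~\ref{thm1}.

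You use two hypotheses silently and should state them: that $\xi$ is centered, and that its covariance is trace class. In the paper's application both hold, since $\xi\sim\mathcal{N}(\bm{0},Q)$ with $Q\in\mathbb{T}$ is built into the paper's definition of Gaussian measures. For the lemma in its stated generality, two extensions are needed:
\begin{itemize}
\item A nonzero mean $m$ is absorbed by $\|\xi\|_{\mathcal{H}}^{2}\le 2\|m\|_{\mathcal{H}}^{2}+2\|\xi-m\|_{\mathcal{H}}^{2}$, at the cost of halving $\lambda$.
\item The trace-class property is a standard fact you should cite or prove. One quick proof combines $\mathbb{E}\exp(-\|\xi\|_{\mathcal{H}}^{2}/2)>0$ with $\det(I+C)\ge 1+\operatorname{tr}C$, applied to the covariance matrices $C$ of finitely many coordinates.
\end{itemize}
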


\begin{Lemma}[Proposition 8 in \citet{OhnKim2022}]
Let $M > 0$. Let $L \in \mathbb{N}$, $N \in \mathbb{N}$, $B \ge 1$, $F > 0$, and $S > 0$.
Then for any $\delta \in (0,1)$,
\[
\log N\!\left(
\delta,\,
\mathcal{F}_{p_{0}}^{(M)}(L,N,B,F,S),\,
\|\cdot\|_{\infty}
\right)
\le
2S(L+1)\log\!\left((L+1)(N+1)B\delta^{-1}\right).
\]
\end{Lemma}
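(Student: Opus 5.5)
The plan is to follow the parameter-discretization argument of Schmidt-Hieber for sparse ReLU networks, as in \citet{OhnKim2022}, in three steps.

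\textbf{Step 1 (Lipschitz dependence on parameters).} Let $f,g\in\mathcal{F}_{p_0,1}$ share the architecture $(L,\bm{p})$ with $\mathrm{width}\le N$, $|\theta(f)|_\infty,|\theta(g)|_\infty\le B$, and $|\theta(f)-\theta(g)|_\infty\le\eta$. I would show that on the compact input set $D\times D\times[-M,M]$,
\[
\|f-g\|_\infty\le \eta\,(L+1)\bigl((N+1)B\bigr)^{L+1}
\]
(up to a constant depending on $M$). The proof is by induction on the layers, writing $f_l$ for the network truncated after layer $l$. Since $\sigma$ is $1$-Lipschitz and each weight matrix has at most $N$ columns with entries bounded by $B$, we have $|f_l|_\infty\lesssim ((N+1)B)^l$. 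A telescoping argument, swapping the parameters one layer at a time, then gives $L+1$ terms, each of size at most $\eta\,((N+1)B)^{L+1}$. Multiplying by $\bm{1}_{D\times D\times[-M,M]}$ does not increase sup-distances, so the same bound holds on $\mathcal{F}^{(M)}_{p_0}(L,N,B)$.

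\textbf{Step 2 (counting).} Let $P\le (L+1)(N+1)^2$ be the total number of parameters. A network in the class is determined by two things:
\begin{enumerate}
\item its sparsity pattern, for which there are at most $\binom{P}{S}\le P^S$ choices;
\item the values of its at most $S$ nonzero entries in $[-B,B]$.
\end{enumerate}
Discretize each nonzero entry to a grid of mesh $\eta$, which gives at most $(2B/\eta+1)^S$ points. Choose $\eta=\delta/\bigl((L+1)((N+1)B)^{L+1}\bigr)$. Every network is then within $\delta$ of a grid network with the same sparsity pattern. Taking logarithms,
\[
\log(\#\text{centres})\le S\log P+S\log\Bigl(\tfrac{3(L+1)((N+1)B)^{L+2}}{\delta}\Bigr).
\]
Using $\delta<1$, $B\ge1$, and $P\le((L+1)(N+1))^2$, this is bounded by $2S(L+1)\log\bigl((L+1)(N+1)B\delta^{-1}\bigr)$ after routine bookkeeping.

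\textbf{Step 3 (the $F$-constraint and internal centres).} The paper's definition of covering number requires the centres to lie in $\mathcal{F}^{(M)}_{p_0}(L,N,B,F,S)$ itself. To meet this, run Step 2 at scale $\delta/2$. Keep only those grid cells that meet the class, and pick one class element from each such cell. This yields an internal $\delta$-cover with no more elements, and the factor $2$ is absorbed by the slack in the constants.

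The main obstacle is the constant bookkeeping in Step 1. One must make sure that the input magnitude $M$ and the factor $2$ from Step 3 fit inside the clean bound $2S(L+1)\log(\cdots)$. I would first try to absorb $M$ by treating the first affine layer as acting on inputs rescaled to $[0,1]$, or, failing that, by enlarging $(N+1)B$ by a factor $M$ in the argument of the logarithm. The exponent $L+2$ is then controlled by the leading factor $2(L+1)$.
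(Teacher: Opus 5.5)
\paragraph{The unabsorbable $M$-dependence.}
The argument fails at the step you flagged as the main obstacle, and this is not a bookkeeping issue. With a right-hand side that does not involve $M$ (or $F$), the inequality is false for large $M$. So the $M$-dependent constant from Step~1 cannot be absorbed by any choice of constants.

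Here is a counterexample. Take $L=1$, $B=1$, $S=2$, any $N\ge 2d+1$, any $F\ge M$, and
$$f_w(\bm{u},\bm{v},x):=w\,\sigma(x),\qquad w\in[0,1].$$
This network has one hidden unit with weight $1$ on the last input coordinate, output weight $w$, and all biases zero.
\begin{itemize}
\item Each $f_w\bm{1}_{D\times D\times[-M,M]}$ lies in $\mathcal{F}_{p_{0}}^{(M)}(1,N,1,F,2)$.
\item $\|f_w-f_{w'}\|_{\infty}=M|w-w'|$.
\item The values $w=3k\delta/M$, $0\le k\le\lfloor M/(3\delta)\rfloor$, give more than $M/(3\delta)$ class members that are pairwise more than $2\delta$ apart.
\end{itemize}
No $\delta$-ball contains two of these members, so every $\delta$-cover has more than $M/(3\delta)$ elements. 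The claimed bound, however, is $8\log(2(N+1)/\delta)$ for every $M$. It therefore fails once $M>3\delta\,(2(N+1)/\delta)^{8}$.

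Neither of your proposed fixes avoids this.
\begin{itemize}
\item Rescaling the last coordinate to a unit interval multiplies the corresponding first-layer weights by a factor of order $M$. The rescaled networks then only satisfy $|\theta|_{\infty}\lesssim BM$.
\item Enlarging $(N+1)B$ by $M$ proves a different inequality from the one stated.
\end{itemize}
The paper gives no argument of its own here; it only cites Proposition~8 of Ohn and Kim, a bound for networks on a unit cube. The $[-M,M]$ coordinate is not accounted for there either.

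\paragraph{What your argument does prove, and the remaining repairs.}
Write $B':=B\max\{1,M\}$. Your Steps~1--3 prove the bound with $B'$ in place of $B$, which is equivalent to the stated bound when $M\le 1$. Since $M$ is a fixed constant in Theorem~\ref{convergence_rate}, this is all that theorem needs, and the rate is unchanged. Even for this version, four details need repair.
\begin{enumerate}
\item Step~1 uses that every weight matrix has at most $N$ columns. The first matrix has $p_0=2d+1$ columns, so you need $N\ge 2d+1$, or a width convention that counts the input layer.
\item Your Step~2 bookkeeping does not close for $L=1$ as written. The left side carries $(N+1)^{L+4}$, against $(N+1)^{2(L+1)}$ on the right. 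Instead:
\begin{itemize}
\item keep the grid count as $3(L+1)(N+1)^{L+1}B'^{L+2}/\delta$, rather than inflating it to $((N+1)B')^{L+2}$;
\item use $P\le (L+1)(N+1)^2$ rather than $((L+1)(N+1))^2$.
\end{itemize}
The remaining slack is then at least $S\{2L\log(L+1)-\log 3\}>0$.
\item You did not mention the union over depths $L'\le L$. It costs an extra $\log L$, which the slack above covers.
\item This slack does not absorb the factor $2$ from Step~3 when $L=1$, but that factor is unnecessary. For each sparsity pattern, partition $[-B,B]^{S}$ into cubes of side $\eta$ and pick one class member from each nonempty cube. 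Two members of the same cube differ by at most $\eta$ in every parameter, hence by at most $\delta$ in sup norm.
\end{enumerate}
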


\section{Details for numerical experiments in Section 5}
In this section, we describe the data-generating procedure and the learning method.
Let $\{\mathfrak{u}_{i}\}_{i=0}^{S_g-1}$ and $\{\mathfrak{v}_{i}\}_{i = 0}^{S_{g}-1}$ be the one-dimensional grid points defined by $\mathfrak{u}_{i} = \mathfrak{v}_{i} = i/S_{g}$. 
As discussed in Section~5, we construct a high dimensional time series 
$\boldsymbol{Z}_{T} = \{Z_{t}\}_{t = 1}^{T}$, where each observation is given by 
$Z_{t} = \left\{Z_{t}(\mathfrak{u}_{i}, \mathfrak{u}_{j})\right\}_{0 \leq i,j \leq S_{g}-1}$. 
The process $\boldsymbol{Z}_{T}$ is defined by
\begin{align}
    Z_{t+1}(\mathfrak{u}_{i_{1}},\mathfrak{u}_{j_{1}})
    &= \frac{1}{S_{g}^{2}}
       \sum_{i_{2},j_{2} = 0}^{S_{g}-1}
       5K_{G}(\mathfrak{u}_{i_{1}} - \mathfrak{v}_{i_{2}},\, \mathfrak{u}_{j_{1}} - \mathfrak{v}_{j_{2}})\,
       \tau\!\left(Z_{t}(\mathfrak{v}_{i_{2}}, \mathfrak{v}_{j_{2}})\right)
       + \xi_{t}(\mathfrak{u}_{i_{1}}, \mathfrak{u}_{j_{1}}), \label{discrete}
\end{align}
for each $t \in \mathbb{N}$ and $0 \leq i_{1},j_{1} \leq S_{g}-1$. Here, the kernel function is defined as 
$K_{G}(h_{1}, h_{2}) := \exp\{-5(h_{1}^{2} + h_{2}^{2})\}$ for $h_{1},h_{2} \in \mathbb{R}$, and  
the nonlinear transformation $\tau:\mathbb{R} \to \mathbb{R}$ is given by 
$\tau(x) := 1.5 + 2.5 \cos(x) + 2.0 \sin(2x)$ for $x \in \mathbb{R}$. The noise term $\xi_{t}$ is a zero-mean Gaussian process whose covariance function is given by the same kernel $K_{G}$, that is, 
$\mathbb{E}\left[\xi_{1}(u_{1},u_{2})\xi_{1}(v_{1},v_{2})\right] = K_{G}(u_{1}-v_{1},u_{2}-v_{2})$ for $u_{1},u_{2},v_{1},v_{2} \in [0,1]$. We now describe how we approximate stationarity of $\boldsymbol{Z}_{T}$ and generate the Gaussian noise process $\xi_{t} := \{\xi_{t}(\mathfrak{u}_{i}, \mathfrak{u}_{j})\}_{0 \leq i,j \leq S_{g}-1}$.
To approximate stationarity, we first generate a functional time series 
$\{Z_{t}\}_{t = 1}^{T + 500}$ according to \eqref{discrete},  
discard the first 500 observations as burn-in, 
and retain $\{Z_t\}_{t=501}^{T+500}$ as the simulated sample.
To generate the Gaussian process, we adopt the method introduced by \citet{DietrichNewsam1997}, and we restate the relevant steps here for completeness. 
We begin by introducing the necessary notation. For any integer $\ell_{\mathrm{per}} \ge 2$, define the function class
\begin{align}
&L_{\mathrm{per}}^{2}\left([0,\ell_{\mathrm{per}})^{2}\right)  := \left\{
f:\mathbb R^2\to\mathbb R
\,\middle|\,
\begin{aligned}
& f(x_1+\ell_{\mathrm{per}},x_2)=f(x_1,x_2),
    \quad (x_1,x_2)\in\mathbb R^2,\\
& f(x_1,x_2+\ell_{\mathrm{per}})=f(x_1,x_2),
    \quad (x_1,x_2)\in\mathbb R^2,\\
& \int_{[0,\ell_{\mathrm{per}})^2}
|f(x_1,x_2)|^2\,dx_1dx_2<\infty
\end{aligned}
\right\}.
\end{align}
For any $f \in L_{\mathrm{per}}^{2}\left([0,\ell_{\mathrm{per}})^{2}\right)$ and $z_{1}, z_{2} \in \mathbb{Z}$, define its Fourier coefficients by
\begin{align}
    \widehat{f}(z_{1}, z_{2})
    &= \frac{1}{\ell_{per}^{2}}\int_{[0,\ell_{\mathrm{per}})^{2}}
       f(x_{1},x_{2})
       \exp\!\left( -2\pi \sqrt{-1} \frac{z_{1}x_{1} + z_{2}x_{2}}{\ell_{\mathrm{per}}} \right)
       \, dx_{1}\, dx_{2},
       \nonumber
\end{align}
where $\sqrt{-1}$ denotes the imaginary unit.
We further introduce the periodic extension 
$K_{\mathrm{per}}:(-\ell_{\mathrm{per}},\ell_{\mathrm{per}})^{2}$ $\to \mathbb{R}_{\geq 0}$ defined by
\begin{align}
    K_{\mathrm{per}}(h_{1},\, h_{2})
    &:= \sum_{(p,q) \in \mathbb{Z}^{2}}
        K_{G}(h_{1} + \ell_{\mathrm{per}}p,\,
          h_{2} + \ell_{\mathrm{per}}q).
    \nonumber
\end{align}
Since $K_{G}$ is a Gaussian kernel, the above series is well defined. Similarly, define $K_{\mathrm{circ}}: [0,\ell_{\mathrm{per}})^{2} \to \mathbb{R}_{\geq 0}$ by
\begin{align}
    K_{\mathrm{circ}}(u_{1},u_{2})
    &:= 
    K_{G}\left(\min\{u_{1}, \ell_{\mathrm{per}}-u_{1}\}, \min\{ u_{2}, \ell_{\mathrm{per}} - u_{2}\}\right),
    \qquad (u_{1},u_{2}) \in [0,\ell_{\mathrm{per}})^{2}.
    \nonumber
\end{align}
Throughout the following discussion, unless otherwise specified, we take $\ell_{\mathrm{per}} = 2$. We introduce the remaining necessary notation. 
For any $z \in \mathbb{C}$, let 
$\mathrm{Re}(z)$ denote its real part.
Let $A = (a_{ij}) \in \mathbb{R}^{m \times n}$. 
The vectorization operator $\mathrm{vec} : \mathbb{R}^{m \times n} \to \mathbb{R}^{mn}$ 
is defined by stacking the columns of $A$ into a single vector:
$\mathrm{vec}(A)
    :=
    [
        a_{11}, a_{21}, \dots, a_{m1},
        a_{12}, a_{22}, \dots, a_{m2},\dots, a_{1n}, a_{2n}, \dots, a_{mn}
    ]^{\top}$.
    
We now describe the procedure for generating 
$\left\{\xi(\mathfrak{u}_{i},\mathfrak{u}_{j})\right\}_{0 \leq i,j \leq S_{g}-1}$. 
In this study, instead of directly generating $\xi$, we construct a Gaussian process 
$\xi_{\mathrm{per}} := \left(\xi_{\mathrm{per}}(u_{1},u_{2})\right)_{(u_{1},u_{2}) \in [0,2)^{2}}$ 
with periodicity on $[0,2)^{2}$. Specifically, we generate the evaluations 
$\{\xi_{\mathrm{per}}(\mathfrak{u}_{i},\mathfrak{u}_{j})\}_{0 \le i,j \le S_{g}-1}$ 
on the grid points $\{(\mathfrak{u}_{i},\mathfrak{u}_{j})\}_{0 \leq i,j \leq S_{g}-1}$.
 % Since $\xi_{\mathrm{per}}$ is a zero-mean Gaussian process on the torus $[0,2)^2$, it admits the representation
 The periodic Gaussian field $\xi_{\mathrm{per}}$ with covariance function \(K_{\mathrm{per}}\) is represented through the Fourier expansion
\begin{align}
    \xi_{\mathrm{per}}(u_{1},u_{2})
    = \mathrm{Re}\Bigg(
      \sum_{(p,q)\in \mathbb{Z}^{2}}
      \sqrt{\widehat{K}_{per}(p,q)}\,\tilde{z}_{pq}
      \exp\!\left(-2\pi \sqrt{-1} \frac{pu_{1} + qu_{2}}{2} \right)
      \Bigg), 
    \qquad (u_{1},u_{2}) \in [0,2)^{2}, 
    \label{fourier_transform_xi}
\end{align}
where $\{\tilde z_{pq}\}_{(p,q) \in \mathbb{Z}^{2}}$ is an i.i.d.\ sequence of complex-valued random variables such that the real and imaginary parts of $\tilde z_{pq}$ are independent standard normal random variables for each $p,q$. 
In Section~5, we approximate the infinite series in \eqref{fourier_transform_xi} by
\begin{align}
    \tilde{\xi}_{\mathrm{per}}(u_i,u_j)
    :=
    \operatorname{Re}\left(
    \sum_{p,q=0}^{2S_g-1}
       \sqrt{\widehat K_{\mathrm{per}}(p,q)}\,\tilde z_{pq}
       \exp\!\left(-2\pi\sqrt{-1}\frac{p u_i+q u_j}{2}\right)
    \right).\nonumber
\end{align}
The Fourier coefficients \(\widehat K_{\mathrm{per}}(p,q)\) are also approximated by the finite sums
\begin{align}
    &\frac{1}{(2S_{g})^{2}}\sum_{i,j=0}^{2S_g-1}
    K_{\mathrm{per}}\left(\frac{i}{S_g},\frac{j}{S_g}\right)
    \exp\left(-2\pi\sqrt{-1}\frac{ip+jq}{2S_g}\right).
    \label{Kperfinite}
\end{align}
Since evaluating \(K_{\mathrm{per}}\) itself involves an infinite series, 
we replace \(K_{\mathrm{per}}\) in \eqref{Kperfinite} with \(K_{\mathrm{circ}}\) in the actual computation.
Specifically, for \(p,q=0,\ldots,2S_g-1\), we compute
\begin{align}
    \lambda_{pq}
    &:=
    \sum_{i,j=0}^{2S_g-1}
    K_{\mathrm{circ}}(u_i,u_j)
    \exp\left(
        -2\pi\sqrt{-1}\frac{pi+qj}{2S_g}
    \right),
    \nonumber
\end{align}
and use \(\lambda_{pq}/(2S_g)^2\) as the discrete approximation to
\(\widehat K_{\mathrm{per}}(p,q)\).
Consequently, for \(i,j=0,\ldots,S_g-1\), we compute
\begin{align}
    \tilde{\xi}_{\mathrm{circ}}(u_i,u_j)
    =
    \operatorname{Re}\left(
    \sum_{p,q=0}^{2S_g-1}
    \frac{\sqrt{\lambda_{pq}}}{2S_g}
    \tilde z_{pq}
    \exp\left(2\pi\sqrt{-1}\frac{pi+qj}{2S_g}\right)
    \right),
    \nonumber
\end{align}
and use the process $\tilde{\xi}_{\mathrm{circ}}:=
    \left\{
    \tilde{\xi}_{\mathrm{circ}}(u_i,u_j)
    \right\}_{0\leq i,j\leq S_g-1}$ in the numerical experiments. 
    To accelerate the computation of these finite sums, we use the fast Fourier transform (FFT) of \citet{CooleyTukey1965} to compute 
\(\{\lambda_{pq}\}_{p,q=0}^{2S_g-1}\). 
We then compute 
\(\{\tilde{\xi}_{\mathrm{circ}}(u_i,u_j)\}_{i,j=0}^{2S_g-1}\) from
\(\{\lambda_{pq}\}_{p,q=0}^{2S_g-1}\) and
\(\{\tilde z_{pq}\}_{p,q=0}^{2S_g-1}\) using the inverse fast Fourier transform (IFFT) of \citet{CooleyTukey1965}, and retain the sub-array indexed by \(0\leq i,j\leq S_g-1\).

The overall procedure for generating the data is summarized as follows:
\begin{enumerate}
    \item Set the initial value to $Z_{1}(\mathfrak{u}_{i},\mathfrak{u}_{j}) = 0$ for any $0 \leq i,j \leq S_{g}-1$.
    \item Given the value 
    $Z_{t-1} = \left\{Z_{t-1}(\mathfrak{u}_{i},\mathfrak{u}_{j})\right\}_{0 \leq i,j \leq S_{g}-1} \in \mathbb{R}^{S_{g} \times S_{g}}$, 
    update 
    $Z_{t} = \left\{Z_{t}(\mathfrak{u}_{i},\mathfrak{u}_{j})\right\}_{0 \leq i,j \leq S_{g}-1}$ as follows:
    \begin{enumerate}
        \item Compute 
        $\left\{\Psi(Z_{t-1})(\mathfrak{u}_{i},\mathfrak{u}_{j})\right\}_{0 \leq i,j \leq S_{g}-1} \in \mathbb{R}^{S_{g} \times S_{g}}$.
        
        \item Generate the Gaussian process 
        $\tilde{\xi}_{\mathrm{circ}}
        = \left\{\tilde{\xi}_{\mathrm{circ}}(\mathfrak{u}_{i},\mathfrak{u}_{j})\right\}_{0 \leq i,j \leq S_{g}-1}
        \in \mathbb{R}^{S_{g} \times S_{g}}$.
        
        \item Update 
        \[
            Z_{t}(\mathfrak{u}_{i},\mathfrak{u}_{j})
            := \Psi(Z_{t-1})(\mathfrak{u}_{i},\mathfrak{u}_{j})
               + \tilde{\xi}_{\mathrm{circ}}(\mathfrak{u}_{i},\mathfrak{u}_{j}),
            \qquad 0 \leq i,j \leq S_{g}-1.
        \]
    \end{enumerate}
    
    \item For \(t=2,\ldots,T+500\), repeat Step 2 to obtain
\(\{Z_t\}_{t=1}^{T+500}\).
The first 500 observations, \(\{Z_t\}_{t=1}^{500}\), are discarded as burn-in, and the remaining sequence is used as the sample.
    
    \item Repeat Steps~1--3 independently $B$ times.
\end{enumerate}

The process 
$\left\{\tilde{\xi}_{\mathrm{circ}}(\mathfrak{u}_{i},\mathfrak{u}_{j})\right\}_{0 \leq i,j \leq S_{g}-1}$ 
is generated as follows:
\begin{enumerate}
    \item For $p,q = 0,\dots,2S_{g}-1$, compute 
    % $\lambda_{pq} = \widehat{K}_{\mathrm{circ}}(p,q)$ 
    $\lambda_{pq}$ via FFT.  
    \item For $0 \leq i,j \leq 2S_{g}-1$, compute via IFFT:
    \[
        \tilde{\xi}_{\mathrm{circ}}(\mathfrak{u}_{i},\mathfrak{u}_{j})
        = \mathrm{Re}\left(\sum_{p,q=0}^{2S_{g}-1}
          \frac{\sqrt{\lambda_{pq}}}{2S_{g}} \tilde{z}_{pq}
          \exp\!\left( 2\pi \sqrt{-1}\,\frac{p\mathfrak{u}_{i} + q\mathfrak{u}_{j}}{2} \right) \right).
    \]
    
    \item Extract the sub-array 
    $\left\{\tilde{\xi}_{\mathrm{circ}}(\mathfrak{u}_{i},\mathfrak{u}_{j})\right\}_{0 \leq i,j \leq S_{g}-1}$ 
    from the computed  
    $\left\{\tilde{\xi}_{\mathrm{circ}}(\mathfrak{u}_{i},\mathfrak{u}_{j})\right\}_{0 \leq i,j \leq 2S_{g}-1}$ 
    and return this as the final noise field.
\end{enumerate}
Following the above procedure, we generate the simulated data with $S_{g}=100$.

We now describe the DNN training procedure used in the numerical experiments in Section~5.
The notation in this section follows that in Section~5.
As noted there, we compute the DNN estimator $\tilde{\psi}_{T}^{(b)}$ defined by
\begin{align}
    &\tilde{\psi}_{T}^{(b)} := \arg\min_{\psi \in \mathcal{F}_{p_{0},p_{L+1}}}\frac{1}{25^{2}(T-1)}\sum_{t = 1}^{T-1}\sum_{i,j = 0}^{24}
    \bigg\{ Z_{t+1}^{(b)}\left(\frac{i}{25},\frac{j}{25}\right) - \tilde{\Psi}_{\psi}(Z_{t}^{(b)})\left(\frac{i}{25},\frac{j}{25}\right)\bigg\}^{2}. \label{DNN_opt}
\end{align}
We recall that $\{Z_{t}^{(b)}\}_{t = 1}^{T}$ is obtained by downsampling the $100\times100$-dimensional time series generated according to \eqref{discrete} to a $25\times25$ grid.
We set the network architecture $\mathcal{F}_{p_{0},p_{L+1}}$ as $L=5$, $p_{0} = 5, p_{6} = 1$, and $p_1=p_2=p_3=p_{4}= p_{5} = 32$ with the ReLU activation function $\sigma(x)=\max\{x,0\}$. 
The network weights were trained by Adam (\citealp{kingma2015adam}) with learning rate $10^{-3}$ and minibatch size of 128.
To avoid overfitting, we employ the following early stopping scheme.
Let $E\in\mathbb{N}$ be the number of epochs and $P_{\mathrm{ES}}\in\mathbb{Z}_{\ge 0}$ the patience parameter, with $P_{\mathrm{ES}}\le E$.
Let $\tilde{\psi}_{T}^{(b,e)}$ denote the network obtained after training for epoch $e$.
For initialization at $e=0$, we define the bias vectors $\bm{b}_{1},\dots,\bm{b}_{6}$ as zero vectors and, for $i=1,\dots,6$, draw each entry of each weight matrix $W_i$ independently from the uniform distribution on
$\left[-\sqrt{6/(n_{i,\mathrm{in}} + n_{i,\mathrm{out}})},\, \sqrt{6/(n_{i,\mathrm{in}} + n_{i,\mathrm{out}})}\right]$, where $n_{i,\mathrm{in}}$ and $n_{i,\mathrm{out}}$ denote the input and output dimensions of the $i$ th layer, respectively.
% We split the sample $\{Z_{t}^{(b)}\}_{t=1}^{T}$ into
% $\{Z_{t}^{(b)}\}_{t=1}^{T_{\mathrm{train}}}$ and $\{Z_{t}^{(b)}\}_{t=T_{\mathrm{train}}+1}^{T}$, where
% $T_{\mathrm{train}}$ is the integer satisfying $T_{\mathrm{train}}\le \frac{4}{5}T < T_{\mathrm{train}}+1$.
Before training, we split the sample \(\{Z_t^{(b)}\}_{t=1}^{T}\) into a training sample
\(\{Z_t^{(b)}\}_{t=1}^{T_{\mathrm{train}}}\) and a validation sample
\(\{Z_t^{(b)}\}_{t=T_{\mathrm{train}}+1}^{T}\), where
\(T_{\mathrm{train}}:=\lfloor 4T/5\rfloor\).
For each epoch \(e=1,\ldots,E\), we update the network weights using the training sample.
Given $\{Z_{t}^{(b)}\}_{t=1}^{T_{\mathrm{train}}}$ and the previous-epoch estimator $\tilde{\psi}_{T}^{(b,e-1)}$, we update the weights according to Adam (\citet{kingma2015adam}) with the loss function defined by
\begin{align}
    &L(\psi) := \frac{1}{25^{2}(T_{\mathrm{train}}-1)}\sum_{t = 1}^{T_{\mathrm{train}-1}}\sum_{i,j = 0}^{24}\bigg\{ Z_{t+1}^{(b)}\left(\frac{i}{25},\frac{j}{25}\right)
    - \tilde{\Psi}_{\psi}({Z}_{t}^{(b)})\left(\frac{i}{25},\frac{j}{25}\right)\bigg\}^{2},\quad \psi \in \mathcal{F}_{p_{0},p_{L+1}}. \label{DNN_opt_2}
\end{align}
We then evaluate the validation error on $\left\{Z_{t}^{(b)}\right\}_{t=T_{\mathrm{train}}+1}^{T}$ via
\begin{align}
    &\frac{1}{25^{2}(T - T_{\mathrm{train}} - 1)}\sum_{t = T_{\mathrm{train}} + 1}^{T-1}\sum_{i,j = 0}^{24}\bigg\{ Z_{t+1}^{(b)}\left(\frac{i}{25},\frac{j}{25}\right) - \tilde{\Psi}_{\tilde{\psi}_{T}^{(b,e)}}\left(Z_{t}^{(b)}\right)\left(\frac{i}{25},\frac{j}{25}\right)\bigg\}^{2}. \label{DNN_opt_3}
\end{align}
The procedure is iterated for at most $E$ epochs and terminated early if the validation error fails to improve for $P_{\mathrm{ES}}$ consecutive epochs.
Let $E'$ be the resulting number of epochs at termination. In this experiment, we take $\tilde{\psi}_{T}^{(b,E')}$ as the DNN estimator in place of $\tilde{\psi}_{T}^{(b)}$ in \eqref{DNN_opt}. In our numerical experiments, we run the above training procedure with $E=200$ and $P_{\mathrm{ES}}=20$.

Finally, we describe how to compute, for each $t$, the sum
\[
\frac{1}{25^{2}}\sum_{i,j = 0}^{24}\bigg\{ Z_{t+1}^{(b)}\!\left(\frac{i}{25},\frac{j}{25}\right)
    - \tilde{\Psi}_{\psi}({Z}_{t}^{(b)})\!\left(\frac{i}{25},\frac{j}{25}\right)\bigg\}^{2},
\]
as well as how to evaluate $\tilde{\Psi}_{\psi}(Z_{t}^{(b)})$.
A naive implementation of the former via nested loops is computationally expensive; instead, we use matrix operations.
Specifically, given $Z_{t+1}^{(b)}\in\mathbb{R}^{25\times 25}$ and $\tilde{\Psi}_{\psi}(Z_{t}^{(b)})\in\mathbb{R}^{25\times 25}$, define
\[
A:=\left\{\left| Z_{t+1}^{(b)}\!\left(\frac{i}{25},\frac{j}{25}\right)-\tilde{\Psi}_{\psi}(Z_{t}^{(b)})\!\left(\frac{i}{25},\frac{j}{25}\right)\right|^{2}\right\}_{0\le i,j\le 24}.
\]
Then the sum is computed as $\mathrm{vec}(A)^{\top}\bm{w}$, where $\bm{w}:=\frac{1}{25^{2}}\bm{1}_{25^{2}}\in\mathbb{R}^{25^{2}}$ and
$\bm{1}_{25^{2}}:=[\underbrace{1,\dots,1}_{25^{2}}]^{\top}\in\mathbb{R}^{25^{2}}$.
For the evaluation of $\tilde{\Psi}_{\psi}(Z_{t}^{(b)})$, it would likewise be desirable to express the computation in terms of vector--matrix operations. 
However, applying this approach to both quantities is prohibitive in terms of memory. 
We therefore approximate $\tilde{\Psi}_{\psi}(Z_{t}^{(b)})$ via Monte Carlo sampling. 
Specifically, fix an integer $S_{\mathrm{MC}}<25^{2}$. With replacement, we draw $S_{\mathrm{MC}}$ points
$
\{(v_{0,1},v_{0,2}), (v_{1,1},v_{1,2}), \dots, (v_{S_{\mathrm{MC}}-1,1},v_{S_{\mathrm{MC}}-1,2})\}
$
uniformly at random from $\big\{0,\tfrac{1}{25},\dots,\tfrac{24}{25}\big\}^{2}$. 
We then compute $\tilde{\Psi}_{\psi}(Z)\left(\frac{i}{25},\frac{j}{25}\right), 0 \leq i,j \leq 24$ by
\begin{align}
    \frac{1}{S_{\mathrm{MC}}}\sum_{s = 0}^{S_{\mathrm{MC}}-1}\psi\left(\frac{i}{25},\frac{j}{25},v_{s,1},v_{s,2},Z(v_{s,1},v_{s,2})\right). \nonumber
\end{align}
By carrying out the above steps, we numerically compute the estimator defined in \eqref{DNN_opt}. In this study, we set $S_{\mathrm{MC}} = 500$. 
\end{appendix}
%%%%%%%%%%%%%%%%%%%%%%%%%%%%%%%%%%%%%%%%%%%%%%
%% Support information, if any,             %%
%% should be provided in the                %%
%% Acknowledgements section.                %%
%%%%%%%%%%%%%%%%%%%%%%%%%%%%%%%%%%%%%%%%%%%%%%
\begin{acks}[Acknowledgments]
The authors would like to thank Daisuke Kurisu for his helpful comments.
\end{acks}
%%%%%%%%%%%%%%%%%%%%%%%%%%%%%%%%%%%%%%%%%%%%%%
%% Funding information, if any,             %%
%% should be provided in the                %%
%% funding section.                         %%
%%%%%%%%%%%%%%%%%%%%%%%%%%%%%%%%%%%%%%%%%%%%%%
\begin{funding}
The first author was supported by JST BOOST (JPMJBS2502).
The second author was supported in part by JSPS KAKENHI Grants (JP24K14855, JP25K15032, JP25K21806).
\end{funding}
\bibliographystyle{imsart-nameyear} % Style BST file (imsart-number.bst or imsart-nameyear.bst)
\bibliography{bibliography.bib}       % Bibliography file (usually '*.bib')

@book{bosq2000linear,
  author    = {Bosq, D.},
  title     = {Linear processes in function spaces: theory and applications},
  publisher = {Springer},
  year      = {2000},
  volume    = {149}
}

@book{KokoszkaReimherr2017,
  title     = {Introduction to Functional Data Analysis},
  author    = {Kokoszka, Piotr and Reimherr, Matthew},
  year      = {2017},
  publisher = {CRC Press}
}

@article{ramsay1982when,
  author  = {Ramsay, J. O.},
  title   = {When the data are functions},
  journal = {Psychometrika},
  volume  = {47},
  number  = {4},
  pages   = {379--396},
  year    = {1982}
}

@article{chen2000geometric,
  author = {Chen, M. and Chen, G.},
  title = {Geometric ergodicity of nonlinear autoregressive models with changing conditional variances},
  journal = {Canadian Journal of Statistics},
  volume = {28},
  number = {3},
  pages = {605--614},
  year = {2000}
}

@article{davydov1973mixing,
  author = {Davydov, Yu. A.},
  title = {Mixing conditions for {M}arkov chains},
  journal = {Theory of Probability and its Applications},
  volume = {18},
  number = {2},
  pages = {312--328},
  year = {1973}
}

@article{delapena2004self,
  author = {de la Pe{\~n}a, V. H. and Klass, M. J. and Lai, T. L.},
  title = {Self-normalized processes: Exponential inequalities, moment bounds and iterated logarithm laws},
  journal = {Annals of Probability},
  volume = {32},
  number = {3},
  pages = {1902--1933},
  year = {2004}
}

@inproceedings{Giorgobiani2019SubGaussian,
  author = {Giorgobiani, G. and Kvaratskhelia, V. and Tarieladze, V.},
  title = {Notes on sub-{G}aussian random elements},
  booktitle = {International Conference on Applications of Mathematics and Informatics in Natural Sciences and Engineering},
  pages = {197--203},
  publisher = {Springer International Publishing},
  address = {Cham},
  year = {2019}
}

@inproceedings{hairer2011harris,
  author    = {Hairer, M. and Mattingly, J. C.},
  title     = {Yet another look at Harris' ergodic theorem for Markov chains},
  booktitle = {Seminar on Stochastic Analysis, Random Fields and Applications VI},
  pages     = {109--117},
  publisher = {Springer},
  address   = {Basel},
  year      = {2011}
}

@article{Li2021FNO,
  author = {Li, Zongyi and Kovachki, Nikola and Azizzadenesheli, Kamyar and Liu, Burigede and Bhattacharya, Kaushik and Stuart, Andrew and Anandkumar, Anima},
  title = {Fourier neural operator for parametric partial differential equations},
  journal = {Journal of Machine Learning Research},
  volume = {24},
  pages = {1--58},
  year = {2023}
}

@article{pathak2022fourcastnet,
  title     = {FourCastNet: A Global Data-driven High-resolution Weather Model 
               using Adaptive Fourier Neural Operators},
  author    = {Pathak, Jaideep and 
               Subramanian, Shashank and 
               Harrington, Peter and 
               Raja, Sanjeev and 
               Chattopadhyay, Ashesh and 
               Mardani, Morteza and 
               Kurth, Thorsten and 
               Hall, David and 
               Li, Zongyi and 
               Azizzadenesheli, Kamyar and 
               Hassanzadeh, Pedram and 
               Kashinath, Karthik and 
               Anandkumar, Animashree},
  journal   = {arXiv preprint arXiv:2202.11214},
  year      = {2022},
  url       = {https://arxiv.org/abs/2202.11214}
}

@article{choi2024applications,
  author  = {Choi, B. J. and Jin, H. S. and Lkhagvasuren, B.},
  title   = {Applications of the Fourier neural operator in a regional ocean modeling and prediction},
  journal = {Frontiers in Marine Science},
  volume  = {11},
  pages   = {1383997},
  year    = {2024}
}

@article{subedi2025operator,
  author  = {Subedi, U. and Tewari, A.},
  title   = {Operator Learning: A Statistical Perspective},
  journal = {Annual Review of Statistics and Its Application},
  volume  = {13},
  year    = {2025}
}

@article{liu2024deep,
  title   = {Deep nonparametric estimation of operators between infinite dimensional spaces},
  author  = {Liu, Hongjie and Yang, Haizhao and Chen, Minshuo and Zhao, Tuo and Liao, Wenjing},
  journal = {Journal of Machine Learning Research},
  volume  = {25},
  number  = {24},
  pages   = {1--67},
  year    = {2024}
}

@article{Kissas2022Learning,
  author  = {Kissas, G. and Seidman, J. H. and Guilhoto, L. F. and Preciado, V. M. and Pappas, G. J. and Perdikaris, P.},
  title   = {Learning operators with coupled attention},
  journal = {Journal of Machine Learning Research},
  volume  = {23},
  number  = {215},
  pages   = {1--63},
  year    = {2022}
}

@article{lu2021deeponet,
  author  = {Lu, L. and Jin, P. and Pang, G. and Zhang, Z. and Karniadakis, G. E.},
  title   = {Learning nonlinear operators via {DeepONet} based on the universal approximation theorem of operators},
  journal = {Nature Machine Intelligence},
  volume  = {3},
  number  = {3},
  pages   = {218--229},
  year    = {2021}
}

@article{DeVore2021NeuralNetworkApproximation,
  author  = {DeVore, Ronald and Hanin, Boris and Petrova, Guergana},
  title   = {Neural network approximation},
  journal = {Acta Numerica},
  volume  = {30},
  pages   = {327--444},
  year    = {2021}
}

@article{Liu2025DeepNeuralNetworksAdaptive,
  author  = {Liu, H. and Cheng, J. and Liao, W.},
  title   = {Deep Neural Networks are Adaptive to Function Regularity and Data Distribution in Approximation and Estimation},
  journal = {Journal of Machine Learning Research},
  volume  = {26},
  number  = {213},
  pages   = {1--56},
  year    = {2025}
}

@article{rao1966inference2,
  author  = {Rao, M. M.},
  title   = {Inference in stochastic processes. II},
  journal = {Zeitschrift f{\"u}r Wahrscheinlichkeitstheorie und Verwandte Gebiete},
  year    = {1966},
  volume  = {5},
  number  = {4},
  pages   = {317--335}
}

@article{Zappala2024NeuralIntegralOperators,
  author  = {Zappala, E. and Fonseca, A. H. D. O. and Caro, J. O. and Moberly, A. H. and Higley, M. J. and Cardin, J. and Dijk, D. V.},
  title   = {Learning integral operators via neural integral equations},
  journal = {Nature Machine Intelligence},
  year    = {2024},
  volume  = {6},
  number  = {9},
  pages   = {1046--1062}
}

@book{horvath2012inference,
  author    = {Horv{\'a}th, L. and Kokoszka, P.},
  title     = {Inference for functional data with applications},
  publisher = {Springer},
  year      = {2012},
  volume    = {200}
}

@article{kurisu2024adaptive,
  title   = {Adaptive deep learning for nonlinear time series models},
  author  = {Kurisu, D. and Fukami, R. and Koike, Y.},
  journal = {Bernoulli},
  volume  = {31},
  number  = {1},
  pages   = {240--270},
  year    = {2025}
}

@book{lifshits2012gaussian,
  author = {Lifshits, M.},
  title = {Lectures on {G}aussian Processes},
  series = {SpringerBriefs in Mathematics},
  publisher = {Springer},
  address = {Berlin, Heidelberg},
  year = {2012},
  isbn = {978-3-642-24938-9}
}

@book{meyntweedie2009,
  author = {Meyn, S. P. and Tweedie, R. L.},
  title = {Markov Chains and Stochastic Stability},
  edition = {2nd},
  publisher = {Cambridge University Press},
  address = {Cambridge},
  year = {2009}
}

@book{rio2013inequalities,
  author    = {Rio, E.},
  title     = {Inequalities and Limit Theorems for Weakly Dependent Sequences},
  publisher = {Springer},
  year      = {2017}
}

@article{schmidt2019nonparametric,
  author = {Schmidt-Hieber, J.},
  title = {Nonparametric regression using deep neural networks with {ReLU} activation function},
  journal = {Annals of Statistics},
  volume = {48},
  number = {4},
  pages = {1875--1897},
  year = {2020}
}

@article{ZhuPolitis2017,
  author = {Zhu, T. and Politis, D.},
  title = {Kernel estimates of nonparametric functional autoregression models and their bootstrap approximation},
  journal = {Electronic Journal of Statistics},
  volume = {11},
  pages = {2876--2906},
  year = {2017}
}

@incollection{bosq1991modelization,
  author    = {Bosq, D.},
  title     = {Modelization, nonparametric estimation and prediction for continuous time processes},
  booktitle = {Nonparametric Functional Estimation and Related Topics},
  pages     = {509--529},
  publisher = {Springer},
  address   = {Dordrecht},
  year      = {1991}
}

@article{hormann2010weakly,
  title={Weakly dependent functional data},
  author={H{\"o}rmann, Siegfried and Kokoszka, Piotr},
  journal={Annals of Statistics},
  volume={38},
  number={3},
  pages={1845--1884},
  year={2010}
}

@article{kreiss1998regression,
  author  = {Kreiss, Jens-Peter and Neumann, Michael H.},
  title   = {Regression-Type Inference in Nonparametric Autoregression},
  journal = {The Annals of Statistics},
  year    = {1998},
  volume  = {26},
  number  = {4},
  pages   = {1570--1613}
}

@article{tjostheim1994nonparametric,
  author  = {Tj{\o}stheim, Dag and Auestad, Bj{\o}rn H.},
  title   = {Nonparametric Identification of Nonlinear Time Series: Projections},
  journal = {Journal of the American Statistical Association},
  year    = {1994},
  volume  = {89},
  number  = {428},
  pages   = {1398--1409}
}

@article{horvath2010testing,
  title={Testing the stability of the functional autoregressive process},
  author={Horv{\'a}th, Lajos and Hu{\v{s}}kov{\'a}, Marie and Kokoszka, Piotr},
  journal={Journal of Multivariate Analysis},
  volume={101},
  number={2},
  pages={352--367},
  year={2010}
}

@article{kokoszka2013determining,
  title={Determining the order of the functional autoregressive model},
  author={Kokoszka, Piotr and Reimherr, Matthew},
  journal={Journal of Time Series Analysis},
  volume={34},
  number={1},
  pages={116--129},
  year={2013}
}

@article{AntoniadisSapatinas2003,
  author  = {Antoniadis, A. and Sapatinas, T.},
  title   = {Wavelet methods for continuous-time prediction using Hilbert-valued autoregressive processes},
  journal = {Journal of Multivariate Analysis},
  year    = {2003},
  volume  = {87},
  number  = {1},
  pages   = {133--158}
}

@article{KarginOnatski2008,
  author  = {Kargin, Vladislav and Onatski, Alexei},
  title   = {Curve forecasting by functional autoregression},
  journal = {Journal of Multivariate Analysis},
  year    = {2008},
  volume  = {99},
  number  = {10},
  pages   = {2508--2526}
}

@article{Hairer2002ExponentialMixing,
  author    = {Hairer, Martin},
  title     = {Exponential mixing properties of stochastic {PDE}s through asymptotic coupling},
  journal   = {Probability Theory and Related Fields},
  volume    = {124},
  number    = {3},
  pages     = {345--380},
  year      = {2002},
  month     = {11},
  publisher = {Springer}
}

@article{GoldysMaslowski2005ExponentialErgodicity,
  author  = {Goldys, B. and Maslowski, B.},
  title   = {Exponential ergodicity for stochastic Burgers and 2D Navier--Stokes equations},
  journal = {Journal of Functional Analysis},
  volume  = {226},
  number  = {1},
  pages   = {230--255},
  year    = {2005},
  month   = {9}
}

@article{Reinhardt2024Operator,
  author        = {Reinhardt, N. and Wang, S. and Zech, J.},
  title         = {Statistical learning theory for neural operators},
  journal       = {arXiv preprint arXiv:2412.17582},
  year          = {2024},
  eprint        = {2412.17582},
  archivePrefix = {arXiv},
  primaryClass  = {cs.LG}
}

@inproceedings{kingma2015adam,
  author    = {Kingma, Diederik P. and Ba, Jimmy Lei},
  title     = {Adam: A Method for Stochastic Optimization},
  booktitle = {3rd International Conference on Learning Representations (ICLR 2015)},
  editor    = {Bengio, Yoshua and LeCun, Yann},
  address   = {San Diego, CA, USA},
  month     = may,
  year      = {2015},
  note      = {Conference Track Proceedings}
}

@article{CooleyTukey1965,
  author  = {Cooley, J. W. and Tukey, J. W.},
  title   = {An algorithm for the machine calculation of complex {F}ourier series},
  journal = {Mathematics of Computation},
  volume  = {19},
  number  = {90},
  pages   = {297--301},
  year    = {1965}
}

@article{DietrichNewsam1997,
  author  = {Dietrich, C. R. and Newsam, G. N.},
  title   = {Fast and exact simulation of stationary {G}aussian processes through circulant embedding of the covariance matrix},
  journal = {SIAM Journal on Scientific Computing},
  volume  = {18},
  number  = {4},
  pages   = {1088--1107},
  year    = {1997}
}

@book{meyn_tweedie_2009,
  author    = {Meyn, S. P. and Tweedie, R. L.},
  title     = {Markov Chains and Stochastic Stability},
  edition   = {2},
  publisher = {Cambridge University Press},
  address   = {Cambridge},
  year      = {2009}
}

@article{OhnKim2022,
  author  = {Ohn, I. and Kim, Y.},
  title   = {Nonconvex sparse regularization for deep neural networks and its optimality},
  journal = {Neural Computation},
  volume  = {34},
  number  = {2},
  pages   = {476--517},
  year    = {2022}
}

@book{vdvaart1996weak,
  author    = {van der Vaart, A. W. and Wellner, J. A.},
  title     = {Weak Convergence and Empirical Processes with Applications to Statistics},
  publisher = {Springer},
  year      = {1996}
}

%% or include bibliography directly:
% \begin{thebibliography}{}
% \bibitem{b1}
% \end{thebibliography}

\end{document}